\theoremstyle{plain}
\newtheorem{theorem}{Theorem}[section]
\newtheorem{corollary}[theorem]{Corollary}
\newtheorem{proposition}[theorem]{Proposition}
\newtheorem{lemma}[theorem]{Lemma}
\theoremstyle{definition}
\newtheorem{definition}[theorem]{Definition}
\newtheorem{remark}[theorem]{Remark}
\newtheorem{example}[theorem]{Example}
\numberwithin{equation}{section}
\newcommand{\CMLt}{\text{CML}_3}
\newcommand{\CMLtThm}{\emph{CML}_3}
\newcommand{\ld}{\backslash_{\circ}}
\newcommand{\rd}{/^{*}}
\newcommand{\rack}{\triangleright^r}
\begin{document}

\title[Dihedral solutions of the SYBE]{Dihedral solutions of the set theoretical Yang-Baxter equation}

\author[A. W. Nowak]{Alex W. Nowak }
\address{Department of Mathematics\\
Howard University\\
Washington, D.C. 20059, U.S.A.}

\email{alex.nowak@howard.edu}

\author[A. Zamojska-Dzienio]{Anna Zamojska-Dzienio}
\address{Faculty of Mathematics and Information Science\\ Warsaw University of Technology\\ 00-661 Warsaw, Poland}
\email{anna.zamojska@pw.edu.pl}

\keywords{Quantum Yang-Baxter equation, set theoretical Yang-Baxter equation, nondegenerate solution, latin solution, Bruck loop, commutative Moufang loop, symmetric space}
\subjclass[2010]{16T25, 20N05}

\begin{abstract}
We introduce the notion of a \emph{braided dihedral set} (BDS) to describe set-theoretical solutions of the Yang-Baxter equation (YBE) that furnish representations of the infinite dihedral group on the Cartesian square of the underlying set. BDS which lead to representations of the symmetric group on three objects are called \emph{braided triality sets} (BTS). Basic examples of BDS come from symmetric spaces.
We show that Latin BDS (LBDS) can be described entirely in terms of involutions of uniquely 2-divisible Bruck loops. We show that isomorphism classes of LBDS are in one-to-one correspondence with conjugacy classes of involutions of uniquely 2-divisible Bruck loops. We describe all LBDS of prime, prime-square and 3 times prime-order, up to isomorphism. Using \texttt{GAP}, we enumerate isomorphism classes of LBDS of orders 27 and 81.
Latin BTS, or LBTS, are shown to be in one-to-one correspondence with involutions of commutative Moufang loops of exponent 3 (CML3), and, as with LBDS, isomorphisms classes of LBTS coincide with conjugacy classes of CML3-involutions. We classify all LBTS of order at most 81.
\end{abstract}

\maketitle

\section{Introduction}\label{Sec:Intro}
\subsection{Background and motivation}\label{SubSec:Background}

The quantum Yang-Baxter equation, which first appeared in the field of statistical mechanics, has proven to be an object of interest in many areas of mathematics, including the representation theory of quantum groups \cite{Drinfeld88} and knot theory \cite{Jones, Turaev}. We study its set-theoretical articulation, which was first proposed by Drinfeld \cite{Drinfeld}. Namely, a function of sets $r:Q^2\to Q^2$ is a \emph{set-theoretical solution to the quantum Yang-Baxter equation} if

\begin{equation}\label{Eq:SetBraidEqn}
(r\times 1_Q)(1_Q\times r)(r\times 1_Q)=(1_Q\times r)(r\times 1_Q)(1_Q\times r).
    \end{equation}
The term \emph{solution} refers to a set-map pair $(Q, r)$ such that $r$ adheres to \eqref{Eq:SetBraidEqn}. We also use the term \emph{braided set} to refer to such a pair.  We call a solution $(Q, r)$ \emph{bijective} if $r$ is bijective. A solution is \emph{involutive} if $r^2=1_{Q^2}$. It is \emph{idempotent} if $r^2=r$. More generally, $(Q, r)$ has \emph{finite order} if there is a positive integer $n$ and nonnegative integer $i$ for which $r^{n+i}=r^i$.

With respect to a binary operation $*$ on $Q$ and each $q\in Q,$ define \emph{left and right multiplication maps}:
\begin{align*}
    L^*_q&:x\mapsto q*x,\\
    R^*_q&:x\mapsto x*q.
\end{align*}
Now, given a solution $r:Q^2\to Q^2$, define binary operations $\circ$ and $\bullet$ on $Q$ such that $r(x, y)=(x\circ y, x\bullet y)$. If for each $q\in Q$, $L^\circ_q$ is bijective, then we say $(Q, r)$ is \emph{left nondegenerate}, while if each $R^\bullet_q$ is bijective, then $(Q, r)$ is \emph{right nondegenerate}. A solution is \emph{nondegenerate} if it is left and right nondegenerate. If $(Q, r)$ is left nondegenerate and each $R^\circ_q$ is also bijective, we say $(Q, r)$ is a \emph{Latin solution}.

While nondegenerate, involutive solutions have garnered the most interest (\cite{Etingov99, Rump} are among the most prominent works in this area), nondegenerate, bijective solutions are well-studied too \cite{Lu, Soloviev}. Now, with the combinatorial and semigroup-theoretic utility of idempotent solutions well-established \cite{Colazzo24, LebedPl}, there is a burgeoning theory of left nondegenerate, not necessarily bijective solutions \cite{Catino, Colazzo}. Latin solutions are considered in \cite{BKSV, StanVoj}

The solutions that initially motivated this investigation are nondegenerate, Latin. Moreover, they are bijective and have finite order, but they are not involutive, and do not necessarily need to be defined on a finite set. They were introduced by Smith \cite[Thm.~5.7]{Sm16}. They are defined as
\begin{equation}\label{Eq:SMith'sSol}
    r:(x, y)\mapsto (-x+y, -x),
\end{equation}
where $(Q, +)$ is a (not necessarily finite) commutative Moufang loop of exponent $3$, or $\CMLt$ ---cf. Section \ref{SubSec:Loops} below for the definition of $\CMLt$, and if unfamiliar with loop theory, for now, you can take $(Q, +)$ to be an elementary abelian $3$-group.
 If we let $\tau:(x, y)\mapsto(y, x)$ denote the trivial solution, then it so happens that
 \begin{equation}\label{Eq:DihedRel}
     \tau^2=(\tau r)^2=1_{Q^2},
 \end{equation}
 so that $Q^2$ is a $D_\infty$-set under the action of $\langle r, \tau\rangle\leq S_{Q^2}$, where $D_\infty$ denotes the infinite dihedral group.  We also have
 \begin{equation}\label{Eq:TrialityRelation}
     r^3=1_{Q^2},
 \end{equation}
making $Q^2$ an $S_3$-set too.

\begin{remark}\label{Rmk:DuallyInvolutive}
   For a function $\rho:Q^2\to Q^2$, let $\rho^{ij}:Q^3\to Q^3$ act as $\rho$ in the $i$th and $j$th components of $Q^3$ while fixing the remaining component. We note here that $r$ is a solution of \eqref{Eq:SetBraidEqn} if and only if the map $\rho:=\tau r$  is a solution to the quantum Yang-Baxter equation:
    \begin{equation}\label{Eq:QYBE}
        \rho^{12}\rho^{13}\rho^{23}=\rho^{23}\rho^{13}\rho^{12}.
    \end{equation}
   It follows that $r$ is a dihedral solution if and only if $\rho=\tau r$ is an involutive map satisfying \eqref{Eq:QYBE}. In this sense, the solutions we study in this paper are dual to the famous involutive solutions of \eqref{Eq:SetBraidEqn}.
    \end{remark}
For the sake of our discussion in the next section, we mention here that another class of solutions exhibiting $S_3$ symmetry on $Q^2$ includes those of the form
\begin{equation}\label{Eq:CMLDerSol}
    r:(x, y)\mapsto (-x-y, x).
\end{equation}
The operation $-x-y$ over a $\CMLt$ is extensively studied in design theory, for it gives rise to a famous class of Steiner triple systems, the so-called Hall triple systems \cite{Hall}. In the study of the SYBE, solutions, which, like \eqref{Eq:CMLDerSol}, are of the form $(x, y)\mapsto(x\circ y, x)$ are called \emph{derived solutions} and are of particular interest. See \cite{Soloviev} for the first in-depth treatment of derived solutions and the introduction of \cite{Lebed} for an excellent account of their significance in the contemporary theory.

As it turns out, there are generalizations of \eqref{Eq:SMith'sSol} and \eqref{Eq:CMLDerSol} from $\CMLt$'s to Bruck loops of odd exponent ---for those uninitiated in loop theory, think abelian groups of odd exponent for now. Namely, whenever $(Q, +)$ is a Bruck loop of odd exponent,
\begin{equation}\label{Eq:GenofSmSoln}
     (x, y)\mapsto (2x+y, -x)
\end{equation}
and
\begin{equation}\label{Eq:GenofDerSoln}
     (x, y)\mapsto (2x-y, x)
\end{equation}
are solutions to \eqref{Eq:SetBraidEqn} (cf. Theorem \ref{Thm:LBDSCharacterization} below), and while they do not necessarily induce $S_3$-symmetry on $Q^2$, they do still adhere to \eqref{Eq:DihedRel}. In fact, if the operation $+$ in \eqref{Eq:GenofSmSoln} or \eqref{Eq:GenofDerSoln} is an abelian group operation of exponent $n$, it is easy to show that $r^n=1$ as well, and so $Q^2$ becomes a $D_n$-set under the action of $\langle r, \tau\rangle.$

\subsection{Plan of the paper}\label{SubSec:Plan}
This paper explores the restrictions that the dihedral relations \eqref{Eq:DihedRel} impose on a solution $(Q, r)$, and in the process, we introduce a new class of SYBE solutions that we call \emph{braided dihedral sets} (BDS). We also consider BDS adhering to \eqref{Eq:TrialityRelation}, which we call \emph{braided triality sets} (BTS). The majority of the paper is devoted to Latin BDS and BTS, or LBDS and LBTS. We characterize them in terms of Bruck loops and $\CMLt$'s, and, thus, we spend Section \ref{Sec:QugpsandLoops} establishing the necessary background on quasigroups and loops. Section \ref{Sec:BDS} provides some initial examples and connects BDS to previously-studied classes of solutions.

In Section \ref{Sec:LBDS}, we tie LBDS $(Q, r)$ to a well-studied class of loops called uniquely $2$-divisible Bruck loops. In fact, Theorem \ref{Thm:LBDSCharacterization} says that $(Q, r)$ is an LBDS if and only if there is a uniquely $2$-divisible Bruck loop $(Q, +)$ with involutory automorphism $S$ such that $r(x, y)=(2x-y^S, x^S)$. Theorem \ref{Thm:LBDSIsom} then proves that the isomorphism problem for LBDS is equivalent to determining conjugacy classes of involutions in automorphism groups of uniquely $2$-divisible Bruck loops. We also consider the effect of the so-called \emph{right-divisional squaring map} $\text{Sq}_{/^\circ}:x\mapsto (R^{\circ}_{x})^{-1}(x)$ on the structure of $(Q, r)$. Theorem \ref{Thm:LBDSSplitExt} states that if $\text{Sq}_{/^\circ}$ is endomorphic with respect to $\circ$, then $(Q, r)$ is a (quasigroup and loop) split extension of a type-\eqref{Eq:GenofDerSoln} solution by a type-\eqref{Eq:GenofSmSoln} solution. In Section \ref{SubSec:SmallOrderLBDS} we explicitly describe all LBDS of orders $p, p^2,$ and $3p$ up to isomorphism, where $p$ stands for an arbitrary odd prime. We also enumerate the isomorphism classes of LBDS of orders $27$ and $81$.

Section \ref{Sec:LBTS} specializes the results of Section \ref{Sec:LBDS} to LBTS. We show that $(Q, r)$ is an LBTS if and only if $r(x, y)=(-x-y^S, x^S)$, where $(Q, +)$ is a $\CMLt$ with involutive automorphism $S$ (Corollary \ref{Cor:LBTSCML}). With LBTS, the map $\text{Sq}_{/^\circ}$ imposes even more structure on $(Q, r)$. Without any assumptions on whether or not it is endomorphic, we leverage $\text{Sq}_{/^\circ}$ in proving Theorem \ref{Thm:LBTSFactorization}, which shows that any LBTS $(Q, r)$ has a $\CMLt$-factorization $Q=P+R$, where $P$ is a solution of the form \eqref{Eq:CMLDerSol} and $R$ is a \eqref{Eq:SMith'sSol}-type solution. Theorem \ref{Thm:LBTS81} concludes the paper by explicitly describing all LBTS of order $n\leq 81$.

\section{Quasigroups, loops, and an alternative formulation of the SYBE}\label{Sec:QugpsandLoops}
\subsection{Quasigroups}\label{SubSec:QugpsLSquares}

\begin{definition}\label{Def:Qugp}
Let $Q$ be a set, and consider three binary operations on $Q$: $*$ denoting \textit{multiplication}, $/^*$ \textit{right division}, and $\backslash_*$ \textit{left division}.  Consider also the following identities in these operations:
\begin{align*}
\text{(IL)}\phantom{A} y\backslash_*(y* x)=x, && \text{(IR)}\phantom{A}x= (x* y)/^* y,\\
\text{(SL)}\phantom{A} y*(y\backslash_* x)=x, && \text{(SR)}\phantom{A}x= (x/^* y)* y.
\end{align*}

\noindent If (IL) and (SL) hold, then $(Q, *, \backslash_*)$ is a \emph{left quasigroup}.  If (IR) and (SR) hold, then $(Q, *, /^*)$ is a \emph{right quasigroup}.  If all four identities hold, then $(Q, *, \backslash_*, /^*)$ is a \emph{quasigroup}.
\end{definition}

\noindent Note that if we divide (SL) on the right by $y\backslash_*x$ and (SR) on the left by $x/^*y$ we get two identities:

\begin{equation}\label{Eq:DivTents}
   x/^*(y\backslash_*x)=(x/^*y)\backslash_*x=y.
\end{equation}

\begin{remark}
\begin{itemize}
    \item[(a)] Given a magma (set under binary operation) $(Q, *)$, $*$ is a left (resp. right) quasigroup multiplication if and only if, for all $q\in Q$, the left (right) translation map $L^{*}_q:Q\to Q; x\mapsto q* x$ ($R^*_q:Q\to Q; x\mapsto x* q$) is invertible.  Axiom (IL) (resp. (IR)) says that each left (right) translation is injective, while (SL) ((SR)) ensures surjectivity of each left (right) translation.
    \item[(b)] Going forward, it will be convenient to use concatenation to denote certain instances of the multiplication, $*$, in a quasigroup $(Q, *, \backslash_*, /^*)$. We will abbreviate $x* y$ to $xy$, and concatenation will bind more tightly than $*$. For example, $(x* y)* z$ will abbreviate as $xy* z.$
    \item[(c)] A mapping $h\colon (Q, \circ, \ld, /^\circ)\to (P, \bullet, \backslash_\bullet, /^\bullet)$ between two quasigroups is a \emph{quasigroup homomorphism} if it preserves the three quasigroup operations. However, preserving one operation implies that the property holds for the other two.
    \item[(d)] A triple of maps between quasigroups $(f, g, h):(Q, \circ, \ld, /^\circ)\to (P, \bullet, \backslash_\bullet, /^\bullet)$ is called a \emph{homotopy} if $f(x)\bullet g(y)=h(x\circ y)$ for all $x, y\in Q$. If each of the three maps is a bijection, $(f, g, h)$ is called an \emph{isotopy}.
\end{itemize}
\end{remark}

\begin{definition}\label{Def:InfixSoltns}
Let $Q$ be a nonempty set, and $$r:Q^2\to Q^2; (x, y)\mapsto (x\circ y, x\bullet y)$$ an invertible map. Let $\tau(x, y)=(y, x)$.  The pair $(Q, r)$ is said to be
\begin{itemize}
    \item[(a)] a \emph{braided set}, or a \emph{solution} if $r$ adheres to \eqref{Eq:SetBraidEqn}; that is, if
    \begin{equation}\label{Eq:YB1}
    x\circ (y\circ z)=(x\circ y)\circ ((x\bullet y)\circ z),
\end{equation}

\begin{equation}\label{Eq:YB2}
    (x\circ y)\bullet ((x\bullet y)\circ z)=(x\bullet (y\circ z))\circ (y\bullet z),
\end{equation}

\begin{equation}\label{Eq:YB3}
    (x\bullet y) \bullet z=(x\bullet(y\circ z))\bullet(y\bullet z),
\end{equation}
hold in Q;
\item[(b)] \emph{left nondegenerate} if $\circ$ gives rise to a left quasigroup $(Q, \circ, \ld)$, \emph{right nondegenerate} if $\bullet$ furnishes a right quasigroup $(Q, \bullet, /^\bullet)$, and \emph{nondegenerate} if it is both left and right nondegenerate;
\item[(c)] \textit{Latin} if $\circ$ gives rise to a quasigroup $(Q, \circ, \ld, /^\circ)$;
\item[(d)] \textit{dihedral} if $(\tau r)^2=1_{Q^2}$; that is, if
\begin{equation}\label{Eq:Di1}
    (x\bullet y)\bullet (x\circ y)=x,
\end{equation}

\begin{equation}\label{Eq:Di2}
    (x\bullet y)\circ (x\circ y)=y,
\end{equation}
each hold in $Q$;
\item[(e)] a \emph{triality set} if $(\tau r)^2=r^3=1_{Q^{2}}$; that is, if \eqref{Eq:Di1}-\eqref{Eq:Di2} hold, and
\begin{equation}\label{Eq:Tri1}
    (x\circ y)\circ (x\bullet y)=y\bullet x,
\end{equation}

\begin{equation}\label{Eq:Tri2}
    (x\circ y)\bullet (x\bullet y)=y\circ x,
\end{equation}
each hold in $Q$.
\end{itemize}
\end{definition}

\begin{remark}\label{Rmk:Birack}
Nondegenerate braided sets are known as \emph{biracks} and they were introduced in \cite{Fenn} in the context of the virtual knot theory \cite{Kauffman}. It was proved in \cite{biquandles} that the following identities:
\begin{equation}\label{E:Sta_6}
 (x\setminus_\circ x)/^\bullet (x\setminus_\circ x)=x .
\end{equation}
\begin{equation}\label{E:Sta_7}
 (x/^\bullet x)\setminus_\circ (x/^\bullet x)=x.
\end{equation}
are equivalent in any birack. Then, a birack which satisfies \eqref{E:Sta_6} or \eqref{E:Sta_7} is called a \emph{biquandle}.
 If one considers the squaring mappings with respect to operations $\setminus_\circ$ and $/^\bullet$ (also known as \emph{diagonal mappings}), then identities \eqref{E:Sta_6} and \eqref{E:Sta_7} say that in any biquandle mappings $\text{Sq}_{\setminus_\circ}$ and $\text{Sq}_{/^\bullet}$ are mutually inverse. In \cite{JedPil} it was shown that in any nondegenerate solutions of \eqref{Eq:SetBraidEqn} diagonal mappings are commuting bijections, but not necessarily mutually inverse.  However, Latin braided dihedral sets are biquandles satisfying, in addition, $\text{Sq}_{\setminus_\circ}=\text{Sq}_{/^\bullet}$ (see Lemmas \ref{Lmm:STmutinv} and \ref{Lmm:DihedSinv} in the Appendix). 
\end{remark}
There are some so-called \emph{symmetry classes} of left and right quasigroups that will come up repeatedly that we shall now review. A magma satisfying
\begin{equation}\label{Eq:LS}
    x* xy=y
\end{equation} is said to be \emph{left symmetric}. A left symmetric magma is necessarily a left quasigroup, for $(L^*_x)^{-1}=L^*_x$. Dually, magmas in which the right translations are involutive, so that
\begin{equation}\label{Eq:RS}
    y x* x=y
\end{equation}
holds, are \emph{right symmetric} right quasigroups. Any magma in which
\begin{equation}\label{Eq:SS1}
    x* yx=y
\end{equation}
holds is a quasigroup, for \eqref{Eq:SS1} implies $L^*_x=(R^*_x)^{-1}$. Equivalently, $R^*_x=(L^*_x)^{-1}$ translates to
\begin{equation}\label{Eq:SS2}
    xy * x=y.
\end{equation}
Quasigroups satisfying the equivalent identities \eqref{Eq:SS1}-\eqref{Eq:SS2} are \emph{semisymmetric}.
Note that Lemma \ref{Lmm:CommiffLCisRS} below may be used to show that any two of \eqref{Eq:LS}-\eqref{Eq:SS1} imply the third. Quasigroups in which these three identities hold are called \emph{totally symmetric} quasigroups. Totally symmetric quasigroups are also commutative.

We conclude this subsection by collecting a couple of elementary quasigroup-theoretic results that will serve us later. The first relates the potential commutativity of a quasigroup to the potential right symmetry of the left division operation.

\begin{lemma}\label{Lmm:CommiffLCisRS}
With respect to a quasigroup $(Q, *, \backslash_*, /^*)$, all of the following identities are equivalent:
\begin{itemize}
    \item[(a)] $xy=yx$;
    \item[(b)] $x/^*y=y\backslash_*x$;
    \item[(c)] $(y\backslash_*x)\backslash_*x=y$.
\end{itemize}
\end{lemma}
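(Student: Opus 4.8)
The plan is to translate all three identities into statements about bijective maps and then read the equivalences off directly. Throughout I fix a quasigroup $(Q,*,\backslash_*,/^*)$ and use that (IL)--(SL) make each left translation $L^*_q\colon x\mapsto q*x$ a bijection with inverse $x\mapsto q\backslash_* x$, while (IR)--(SR) make each right translation $R^*_q\colon x\mapsto x*q$ a bijection with inverse $x\mapsto x/^* q$. I would prove the cycle as two separate equivalences, (a)$\Leftrightarrow$(b) and (b)$\Leftrightarrow$(c).

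For (a)$\Leftrightarrow$(b): identity (b) asserts that $(R^*_y)^{-1}(x)=(L^*_y)^{-1}(x)$ for all $x,y\in Q$. Fixing $y$ and letting $x$ range over $Q$, this is the equality of maps $(R^*_y)^{-1}=(L^*_y)^{-1}$, equivalently $R^*_y=L^*_y$; letting $y$ range too, this says exactly $x*y=y*x$ for all $x,y$, which is (a). So this equivalence is essentially immediate.

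For (b)$\Leftrightarrow$(c), I would introduce the two "right-argument-fixed" maps $\lambda_x\colon a\mapsto a\backslash_* x$ and $\rho_x\colon a\mapsto x/^* a$ (for each $x\in Q$). The key observation is that \eqref{Eq:DivTents}, namely $x/^*(y\backslash_* x)=(x/^*y)\backslash_* x=y$, says precisely that $\rho_x\circ\lambda_x=1_Q$ and $\lambda_x\circ\rho_x=1_Q$; hence $\lambda_x$ and $\rho_x$ are mutually inverse bijections. Now identity (c) says $\lambda_x$ is an involution, i.e. $\lambda_x=\lambda_x^{-1}=\rho_x$, i.e. $a\backslash_* x = x/^* a$ for all $a,x\in Q$, which after renaming $a$ to $y$ is identity (b). This closes the argument, and as a by-product it yields the three-way equivalence with no appeal to commutativity in the middle step.

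The content here is light, so I do not expect a genuine obstacle; the one thing to be careful about is keeping the two families of maps straight — the ordinary translations $L^*_y, R^*_y$, which fix the \emph{left} argument, versus $\lambda_x,\rho_x$, which fix the \emph{right} argument — and confirming that each bijectivity claim is actually supplied by the four quasigroup axioms (or by \eqref{Eq:DivTents}, which is itself a consequence of them) rather than tacitly assumed. I would state the map identifications explicitly at the start so that the rest reduces to "a map equals its own inverse iff it equals [the other map]."
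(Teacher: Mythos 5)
Your proof is correct and follows essentially the same route as the paper's: the same cycle of equivalences, with (a)$\Leftrightarrow$(b) coming from the quasigroup axioms and (b)$\Leftrightarrow$(c) from \eqref{Eq:DivTents}. The only difference is presentational --- you phrase the steps as identities of (inverse) translation maps, whereas the paper carries out the same content by direct equational manipulation.
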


\begin{proof}
    $\text{(a)}\implies \text{(b)}$ By (IL) and (SR), $x/^*y=y\backslash_*(y *(x/^*y))=y\backslash_*((x/^*y)* y)=y\backslash_*x.$\\
    $\text{(b)}\implies \text{(a)}$ By (SL) and (IR), $xy=y*(y\backslash_*(xy))=y* (xy/^*y)=yx$.\\
    $\text{(b)}\implies \text{(c)}$ By \eqref{Eq:DivTents}, $(y\backslash_*x)\backslash_*x=x/^*(y\backslash_*x)=y.$ \\
    $\text{(c)}\implies \text{(b)}$ By \eqref{Eq:DivTents}, $x/^*y=x/^*((y\backslash_*x)\backslash_*x)=y\backslash_*x.$
\end{proof}

\begin{lemma}\label{Lmm:IPMapInvo}
    Let $(Q, *, \backslash)$ be a left quasigroup, and define $x^S:=x\backslash_*x$. If $x^S* xy=y$, then the following identities hold in $(Q, *, \backslash_*)$:
    \begin{itemize}
        \item[(a)] $S^2=1_Q$
        \item[(b)] $x* x^S y=y.$
    \end{itemize}
\end{lemma}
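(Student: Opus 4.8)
The plan is to read the hypothesis $x^S*(x*y)=y$ as an identity between left-translation maps and then peel off (a) and (b) using only the left-quasigroup axioms (IL), (SL) and the definition $x^S=x\backslash_*x$.

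The key step I would carry out first is the substitution $y:=x\backslash_*z$ in the hypothesis: by (SL) the inner term $x*(x\backslash_*z)$ collapses to $z$, so the hypothesis becomes $x^S*z=x\backslash_*z$ for all $x,z\in Q$; equivalently $L^*_{x^S}=(L^*_x)^{-1}$. Part (b) is then immediate: $x*(x^S*y)=x*(x\backslash_*y)=y$ by (SL). For part (a), I would instead apply $x^S\backslash_*(-)$ to the hypothesis; by (IL) the left side simplifies to $x*y$, giving the companion identity $x^S\backslash_*y=x*y$. Specializing $y:=x^S$ yields $(x^S)^S=x^S\backslash_*x^S=x*x^S=x*(x\backslash_*x)$, and one more application of (SL) gives $(x^S)^S=x$, i.e. $S^2=1_Q$.

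I do not expect a serious obstacle: the whole content is recognizing that the hypothesis is precisely the assertion $L^*_{x^S}=(L^*_x)^{-1}$, after which (a) and (b) are one-line consequences. Two small points deserve care. First, since $Q$ is not assumed finite, to turn "$L^*_{x^S}$ is a one-sided inverse of $L^*_x$" into a genuine inverse one should invoke bijectivity of $L^*_x$ (guaranteed by (IL) and (SL)) rather than any counting argument — though the element-wise substitutions above sidestep this entirely. Second, (a) genuinely uses the definition $x^S=x\backslash_*x$ (not merely the fact that $L^*_x$ and $L^*_{x^S}$ are mutually inverse), so I would make sure the step $x*x^S=x$ via (SL) is the place where that definition enters.
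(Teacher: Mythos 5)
Your proof is correct and follows essentially the same route as the paper: both arguments extract the two identities $x^S*z=x\backslash_* z$ and $x^S\backslash_* y=x*y$ from the hypothesis via (IL)/(SL) and then specialize to get (a) and (b). The reformulation $L^*_{x^S}=(L^*_x)^{-1}$ is a nice conceptual gloss, but the element-wise computations are the ones the paper performs.
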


\begin{proof}
    Note $x^S\backslash_*y=x^S\backslash_*(x^S* xy)=xy$ for any $x, y\in Q$. In particular $x^{S^{2}}=x^S\backslash_*x^S=x* x^S=x(x\backslash_*x)=x$, proving (a). To see that (b) holds, note $x^Sy=x^S(x(x\backslash_*y))=x\backslash_*y$, so we conclude $x* x^Sy=x(x\backslash_*y)=y.$
\end{proof}

\subsection{Idempotence and distributivity of operations}
We shall denote the squaring map with respect to an operation $*$ by $\text{Sq}_*(x)=x* x$. When context permits, we may also use exponent notation $x^2=x*x$. When exponentiation appears in an expression involving multiple quasigroup operations, it is understood that these powers are with respect to the multiplication. For example,  $x^2/y=(x* x)/y$ in $(Q, *, \backslash_*, /^*)$.
An element $x$ of a magma $(Q, *)$ is called an \emph{idempotent} of the magma if
\begin{equation}
    x^2=x.
\end{equation}
If each element of $Q$ is an idempotent, then $(Q, *)$ is said to be \emph{idempotent}.

If one expresses the quasigroup axioms (IL)-(SR) in terms of a single variable $x$, the next lemma follows.

\begin{lemma}\label{Lmm:IdempQugp}
If an element $x$ in a quasigroup $(Q, *, \backslash_*, /^*)$ is idempotent with respect to one of the operations, then it is idempotent with respect to all of them.
\end{lemma}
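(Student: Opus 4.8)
The plan is to derive the lemma purely by instantiating the quasigroup axioms at a single variable, exactly as the preceding sentence suggests. Setting both free variables equal to $x$ in (IL), (IR), (SL), (SR) yields, respectively,
\begin{align*}
x\backslash_*(x* x)=x, &\qquad (x* x)/^* x=x,\\
x*(x\backslash_* x)=x, &\qquad (x/^* x)* x=x.
\end{align*}
I would then establish the equivalence of the three statements ``$x$ is idempotent for $*$'', ``$x$ is idempotent for $\backslash_*$'', and ``$x$ is idempotent for $/^*$'' by a cycle of implications that always passes through $*$-idempotence.

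For the forward direction, assume $x* x=x$. Substituting this into the first displayed identity gives $x=x\backslash_*(x* x)=x\backslash_* x$, and substituting it into the second gives $x=(x* x)/^* x=x/^* x$; hence $*$-idempotence of $x$ forces idempotence for all three operations. For the converse, if $x\backslash_* x=x$ then the third displayed identity reads $x=x*(x\backslash_* x)=x* x$, so $x$ is $*$-idempotent and the forward direction applies; symmetrically, if $x/^* x=x$ then the fourth identity reads $x=(x/^* x)* x=x* x$, again reducing to the $*$-idempotent case. This closes the cycle and proves the three-way equivalence.

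I do not anticipate any genuine obstacle here: the proof is a one-line substitution into each axiom, and the only decision to make is which specializations to use and in what order. (For instance, one could instead move directly between the two divisions via the identity \eqref{Eq:DivTents} with $y=x$, but routing everything through $x* x=x$ — using the ``injectivity'' axioms (IL), (IR) in one direction and the ``surjectivity'' axioms (SL), (SR) in the other — keeps the bookkeeping minimal.)
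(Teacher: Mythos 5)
Your proof is correct and is exactly the argument the paper intends: the paper states that the lemma "follows" from expressing (IL)--(SR) in a single variable, and your cycle of implications through $*$-idempotence is precisely that substitution argument, spelled out.
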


A multiplication $*$ is a \emph{left distributive} operation if for all $x, y, z\in Q$,

\begin{equation}\label{Eq:LD}
    x*(y*z)=(x*y)*(x* z).
\end{equation}

A left distributive, left quasigroup is called a \emph{rack}. Note that a map $r(x, y)=(x\circ y, x)$ is a left nondegenerate, derived solution (cf. Section \ref{SubSec:Background}) if and only if $(Q, \circ)$ is a rack. Moreover, to any left nondegenerate solution $r:(x, y)\mapsto(x\circ y, x\bullet y)$, one may associate the \emph{derived solution} $(x, y)\mapsto(x\rack y, x)$, where
\begin{equation}\label{Eq:StrRack}
    x\rack y=x\circ(y\bullet(y\ld x)),
\end{equation}
and $(Q, \rack)$ is called the \emph{derived rack} of the solution \cite[Thm.~1.1]{Soloviev}.

An idempotent rack is called a \emph{quandle}. Quandles and racks which are two-sided quasigroups are referred to as \emph{Latin}. A left symmetric quandle is often referred to as an \emph{involutive quandle} or a \emph{kei}. Whereas quandles provide algebraic invariants of oriented knots, the left-symmetric  identity makes keis suitable for producing algebraic invariants of unoriented knots \cite{Joyce}.

\subsection{SYBE solutions coming from LF-left quasigroups} \label{SubSec:LFQgps}
Until this point, we have been reviewing some standard terms and proving folkloric results from quasigroup theory. In this section, however, we will provide some original observations and new results that clarify the role that LF-left quasigroups play in generating braided sets.

For a left nondegenerate map $r:(x, y)\mapsto (x\circ y, x\bullet y)$, it will be convenient for us to abbreviate $\text{Sq}_{\ld}(x)=x\ld x$ to simply $S$. Moreover, we employ the exponential notation $x^S:=x\ld x.$ The variety of \emph{LF-left quasigroups} is generated by the following variation of \eqref{Eq:LD}:

\begin{equation}\label{Eq:LFNew}
x\circ(y\circ z)=(x\circ y)\circ (x^S\circ z).
\end{equation}

\begin{remark}\label{Rmk:LF}
   A (two-sided) quasigroup satisfying \eqref{Eq:LFNew} is called a left F-quasigroup (or an LF-quasigroup). Dually, one defines right F-quasigroups via
   \begin{equation}\label{Eq:RF}
    (z* y)* x=(z* (x/^*x))* (y* x).
    \end{equation}
   If a quasigroup satisfies both \eqref{Eq:LFNew} and \eqref{Eq:RF}, it belongs to the variety of F-quasigroups, which is one of the oldest studied varieties of quasigroups, introduced already by Murdoch in 1939, although he did not use this name. For details and comprehensive references, see \cite{Shcherb} or \cite[Chapter 6]{Shcherb_book}.
  \end{remark}

The following lemma generalizes a known property of LF-quasigroups to LF-left quasigroups. In fact, the proof given in \cite[Lmm.~2.1]{Shcherb} does not rely on a right division, so we will simply reproduce it below.

\begin{lemma}\label{Lmm:SEndo}
    In an LF-left quasigroup, $(Q, \circ, \ld)$, the map $x^S=x\ld x$ is an endomorphism.
\end{lemma}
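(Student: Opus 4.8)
The plan is to show that $S$ preserves the multiplication $\circ$, i.e. that $(x\circ y)^S = x^S\circ y^S$ for all $x,y$. Recall $x^S = x\ld x$, equivalently $x^S$ is characterized by $x\circ x^S = x$, since $x\circ(x\ld x)=x$ by (SL). So the natural strategy is to verify that $(x\circ y)\circ(x^S\circ y^S)$ equals $x\circ y$; by the uniqueness of left division (axiom IL, injectivity of $L^\circ_{x\circ y}$) this forces $x^S\circ y^S = (x\circ y)^S$.

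First I would compute $(x\circ y)\circ(x^S\circ y^S)$ using the LF-identity \eqref{Eq:LFNew}, which reads $a\circ(b\circ c) = (a\circ b)\circ(a^S\circ c)$. The idea is to run this in reverse: with the roles set so that $a = x$, $b = y$, and the inner term $a^S\circ c = x^S\circ y^S$, i.e. $c = y^S$. Then $(x\circ y)\circ(x^S\circ y^S) = (x\circ y)\circ(x^S\circ y^S) = x\circ(y\circ y^S) = x\circ y$, where the last step uses $y\circ y^S = y$ (that is, (SL) in the form $y\circ(y\ld y)=y$). Then by injectivity of $L^\circ_{x\circ y}$ applied to both sides of $(x\circ y)\circ(x^S\circ y^S) = x\circ y = (x\circ y)\circ(x\circ y)^S$, we conclude $x^S\circ y^S = (x\circ y)^S$, which is exactly the statement that $S$ is an endomorphism of $(Q,\circ)$.

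The one point that needs care is whether the LF-identity \eqref{Eq:LFNew} holds in the form I want to apply it — it is stated for all $x,y,z$, so specializing $z := y^S$ and using $x^S = x\ld x$ is legitimate, and the remaining ingredient, $y\circ y^S = y$, is just (SL) rewritten with the definition $y^S = y\ld y$. So there is really no obstacle beyond bookkeeping; the proof is essentially a one-line manipulation once one recognizes that $x^S$ is the unique solution $w$ of $x\circ w = x$. I would also remark, following the text's note that this reproduces \cite[Lmm.~2.1]{Shcherb}, that no right-division axiom is used anywhere: every step relies only on (IL) and (SL) for the operation $\circ$ together with \eqref{Eq:LFNew}, which is why the lemma extends verbatim from LF-quasigroups to LF-left quasigroups.
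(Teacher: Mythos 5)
Your proof is correct and is essentially identical to the paper's: both apply the LF-identity to $x\circ(y\circ y^S)=x\circ y$ and then left-divide by $x\circ y$ (your appeal to injectivity of $L^\circ_{x\circ y}$ is just axiom (IL) phrased differently) to extract $(x\circ y)^S=x^S\circ y^S$. Your closing remark that only (IL), (SL), and \eqref{Eq:LFNew} are used matches the paper's observation that the argument from \cite[Lmm.~2.1]{Shcherb} carries over to LF-left quasigroups verbatim.
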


\begin{proof}
    Since $y\circ y^S=y,$ \eqref{Eq:LFNew} implies $x\circ y=x\circ(y\circ y^S)=(x\circ y)\circ(x^S\circ y^S)$, and, thus, $(x\circ y)^S=(x\circ y)\ld(x\circ y)=(x\circ y)\ld((x\circ y)\circ (x^S\circ y^S))=x^S\circ y^S$.
\end{proof}

The following proposition illustrates that LF-left quasigroups are to left nondegenerate solutions of the form $(x, y)\mapsto (x\circ y, x^S)$ as racks are to left nondegenerate derived solutions.

\begin{proposition}\label{Prop:F-qgpiffBraid}
A left nondegenerate map of the form $(x, y)\mapsto (x\circ y, x^S)$ is a braiding if and only if $(Q, \circ, \ld)$ is an LF-left quasigroup.
\end{proposition}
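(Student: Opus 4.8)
The plan is to translate the three braiding identities \eqref{Eq:YB1}--\eqref{Eq:YB3} into the special situation at hand, where the second component of $r$ does not depend on the second input but equals $x^S=x\ld x$. Left nondegeneracy guarantees that $(Q,\circ,\ld)$ is a left quasigroup, so $x^S$ is well defined and the substitutions below are legitimate. Concretely, I would set $x\bullet y=x^S$ for all $x,y$ and simplify each of \eqref{Eq:YB1}, \eqref{Eq:YB2}, \eqref{Eq:YB3} in turn.

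Carrying this out: substituting $x\bullet y=x^S$ into \eqref{Eq:YB1} turns the right-hand side $(x\circ y)\circ((x\bullet y)\circ z)$ into $(x\circ y)\circ(x^S\circ z)$, so \eqref{Eq:YB1} becomes $x\circ(y\circ z)=(x\circ y)\circ(x^S\circ z)$, which is precisely the defining law \eqref{Eq:LFNew} of an LF-left quasigroup. In \eqref{Eq:YB2}, the left-hand side has outermost operation $\bullet$ with first argument $x\circ y$, hence equals $(x\circ y)^S$, while the right-hand side $(x\bullet(y\circ z))\circ(y\bullet z)$ equals $x^S\circ y^S$; thus \eqref{Eq:YB2} reduces to the endomorphism law $(x\circ y)^S=x^S\circ y^S$. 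Finally, in \eqref{Eq:YB3}, the left-hand side is $(x\bullet y)\bullet z=x^S\bullet z=(x^S)^S$ and the right-hand side is $(x\bullet(y\circ z))\bullet(y\bullet z)=x^S\bullet y^S=(x^S)^S$, so \eqref{Eq:YB3} is an identity that holds automatically.

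With these three identifications in hand, both directions are immediate. If $(x,y)\mapsto(x\circ y,x^S)$ is a braiding, then in particular \eqref{Eq:YB1} holds, i.e. \eqref{Eq:LFNew} holds, so $(Q,\circ,\ld)$ is an LF-left quasigroup. Conversely, if $(Q,\circ,\ld)$ is an LF-left quasigroup, then \eqref{Eq:YB1} holds by definition, \eqref{Eq:YB2} is exactly the statement that $x\mapsto x^S$ is an endomorphism of $(Q,\circ)$, which is Lemma \ref{Lmm:SEndo}, and \eqref{Eq:YB3} is the tautology observed above; hence all three braiding identities hold. I do not anticipate a genuine obstacle: the work is entirely the bookkeeping in the reduction, once Lemma \ref{Lmm:SEndo} is available. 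The only subtlety worth flagging is that Definition \ref{Def:InfixSoltns} asks the map $r$ of a solution to be invertible; if one insists on this, the proposition should be read with the tacit proviso that $S$ be bijective (equivalently, that $(x,y)\mapsto(x\circ y,x^S)$ be a bijection of $Q^2$), a condition orthogonal to the equational equivalence established above.
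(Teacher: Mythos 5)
Your proof is correct and follows essentially the same route as the paper's: reduce \eqref{Eq:YB1} to \eqref{Eq:LFNew}, observe that \eqref{Eq:YB3} holds trivially, and identify \eqref{Eq:YB2} with the endomorphism property of $S$ supplied by Lemma \ref{Lmm:SEndo}. Your closing remark about the invertibility requirement in Definition \ref{Def:InfixSoltns} (equivalently, bijectivity of $S$) is a fair observation about the statement itself, but it does not affect the equivalence you establish.
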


\begin{proof}
If $(x, y)\mapsto (x\circ y, x^S)$ is a left nondegenerate braiding, then \eqref{Eq:LFNew} follows from \eqref{Eq:YB1}.

Conversely, assume $(Q, \circ, \ld)$ is an LF-left quasigroup and define $r: (x, y)\mapsto (x\circ y, x^S)$. Then \eqref{Eq:YB1} follows by definition. Moreover, $\eqref{Eq:YB3}$ holds trivially. Finally, \eqref{Eq:YB2} translates to $(x\circ y)^S=x^S\circ y^S$, which is established in Lemma \ref{Lmm:SEndo} Hence, \eqref{Eq:YB1}-\eqref{Eq:YB3} are satisfied and $(Q, r)$ is braided.
\end{proof}

\begin{proposition}
\label{Prop:F-qugpsiffSqmap}
Assume that $(x, y)\mapsto(x\circ y, x\bullet y)$ is a Latin braiding. Then $(Q, \circ, \ld, /^\circ)$ is an LF-quasigroup if and only if $x\bullet y=x^S$.
\end{proposition}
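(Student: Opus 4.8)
The plan is to prove both implications by placing the first Yang--Baxter identity \eqref{Eq:YB1} next to the defining LF-identity \eqref{Eq:LFNew} and noting that they differ only in one factor: \eqref{Eq:YB1} reads $x\circ(y\circ z)=(x\circ y)\circ((x\bullet y)\circ z)$, whereas \eqref{Eq:LFNew} reads $x\circ(y\circ z)=(x\circ y)\circ(x^S\circ z)$. So the content of the proposition is precisely that, for a Latin braiding, these two identities are the same identity exactly when $x\bullet y=x^S$.

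For the forward direction, I would assume $(Q,\circ,\ld,/^\circ)$ is an LF-quasigroup. Since $r$ is a braiding, \eqref{Eq:YB1} holds, and since $(Q,\circ)$ is an LF-quasigroup, \eqref{Eq:LFNew} holds; equating the right-hand sides gives $(x\circ y)\circ((x\bullet y)\circ z)=(x\circ y)\circ(x^S\circ z)$ for all $x,y,z\in Q$. I then cancel twice: first apply $(L^\circ_{x\circ y})^{-1}$, available because $(Q,\circ,\ld)$ is a left quasigroup, to get $(x\bullet y)\circ z=x^S\circ z$; then apply $(R^\circ_z)^{-1}$, available because $(Q,\circ,/^\circ)$ is a right quasigroup, to conclude $x\bullet y=x^S$. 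This is the only step that uses more than left nondegeneracy, and it is why the hypothesis is ``Latin'' rather than merely ``left nondegenerate.'' For the converse, I would assume $x\bullet y=x^S$, so that $r$ has the form $(x,y)\mapsto(x\circ y,x^S)$ and, being Latin, is in particular left nondegenerate; Proposition \ref{Prop:F-qgpiffBraid} then yields that $(Q,\circ,\ld)$ is an LF-left quasigroup, i.e. \eqref{Eq:LFNew} holds. (Equivalently, one just substitutes $x\bullet y=x^S$ into \eqref{Eq:YB1} and reads off \eqref{Eq:LFNew} directly.) Since $\circ$ is Latin, $(Q,\circ,\ld,/^\circ)$ is a two-sided quasigroup satisfying \eqref{Eq:LFNew}, hence an LF-quasigroup.

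I do not expect a genuine obstacle here: both directions are essentially immediate once \eqref{Eq:YB1} and \eqref{Eq:LFNew} are compared, and neither \eqref{Eq:YB2} nor \eqref{Eq:YB3} is needed. The only points requiring a moment's care are the bookkeeping of the cancellation laws---making sure left cancellation invokes left-invertibility of $\circ$ and right cancellation invokes right-invertibility---and, on the converse side, checking that the form $(x,y)\mapsto(x\circ y,x^S)$ together with Latinity places us squarely in the hypotheses of Proposition \ref{Prop:F-qgpiffBraid}.
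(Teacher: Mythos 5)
Your proof is correct and follows essentially the same route as the paper: the forward direction cancels $(x\bullet y)\circ z=x^S\circ z$ obtained by comparing \eqref{Eq:YB1} with \eqref{Eq:LFNew} (the paper phrases this as left-dividing the LF identity by $x\circ y$ and invoking \eqref{Eq:YB1}, then cancelling $z$ on the right), and the converse is delegated to Proposition \ref{Prop:F-qgpiffBraid} exactly as in the paper. Your remark that the right cancellation is the one step requiring the Latin hypothesis matches the paper's own Remark \ref{Rmk:LFNOTRIGHT}.
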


\begin{proof}
Suppose $(x, y)\mapsto (x\circ y, x\bullet y)$ is a Latin braiding and that $(Q, \circ, \ld, /^\circ)$ is an LF-quasigroup.
Upon dividing \eqref{Eq:LFNew} on the left by $x\circ y$, we have
    \begin{align*}
        x^S\circ z&=(x\circ y)\ld(x\circ(y\circ z))\\
        &=(x\circ y)\ld((x\circ y)\circ((x\bullet y)\circ z))\\
        &=(x\bullet y)\circ z,
    \end{align*}
where the second equality comes from \eqref{Eq:YB1}. Since $\circ$ accompanies a right division, we can cancel the $z$ in the first and last equalities above, yielding $x\bullet y=x^S$.

The converse follows directly from Proposition \ref{Prop:F-qgpiffBraid}.
\end{proof}

\begin{remark}\label{Rmk:LFNOTRIGHT}
We remark on the necessity of the Latin property in Proposition \ref{Prop:F-qugpsiffSqmap}. Now, if we assume $r$ is merely a left nondegenerate braiding and not Latin, then $x\bullet y=x^S$ does in fact imply $(Q, \circ, \ld)$ is an LF-left quasigroup. This is a consequence of Proposition \ref{Prop:F-qgpiffBraid}. However, there are nondegenerate braidings coming from LF-left quasigroups such that $x\bullet y\neq x^S.$ In fact, any right derived braiding $(x, y)\mapsto (y, x\bullet y)$ furnishes such an example.
\end{remark}

\subsection{Loops}\label{SubSec:Loops}
A quasigroup with a multiplicative identity is called a \emph{loop}. We write $(Q, *, e)$ to describe the structure of a loop. Note that the loop axioms are not enough to ensure the existence two-sided inverses. That is, for some $x\in Q$ there may exist $y\neq z$ such that $xy=zx=e$. When a loop does have two-sided inverses, we write $x^{-1}$ to denote that unique element for which $x^{-1}x=xx^{-1}=e$. A loop with two-sided inverses in which
\begin{equation}\label{Eq:AIP}
    (xy)^{-1}=x^{-1}y^{-1},
\end{equation}
holds for all $x$ and $y$ is called an \emph{automorphic inverse property loop} (AIP).
A loop $Q$ is said to have the \emph{left inverse property} (we call $Q$ an LIP loop) if for every $x\in Q$, there is a two-sided inverse, $x^{-1}$ such that for all $y\in Q$,
 \begin{align}
    x^{-1}(xy)=x(x^{-1}y)=y. \label{Eq:LIP}
\end{align}
The two-sided inverse of a \emph{right inverse property loop} (RIP loop) adheres to
 \begin{align}
y=(yx^{-1})x=(yx)x^{-1}. \label{Eq:RIP}
\end{align}
\emph{Inverse property loops} (IP loops) possess the left and right inverse properties. A loop in which
\begin{equation}\label{Eq:LBolLaw}
    x(y(xz))=(x(yx))z
\end{equation}
for all $x, y, z\in Q$ is called a \emph{(left) Bol loop}. Bol loops are LIP loops \cite[Thm.~2.1]{ROB}. Moreover, Bol loops are power-associative, i.e., every element generates a group. In fact, by \cite[Lmm.~2.1]{ROB} for all $x, y\in Q$ and all integers $n$,
\begin{equation}\label{Eq:PowerAssoc}
    y^nx=y^{n-1}y* x=y^{n-1}* yx.
\end{equation}

If a Bol loop satisfies the AIP, it is called a \emph{Bruck loop}. We will use an additive notation for Bruck loops: $(Q, +, 0)$, but keep in mind that these need not be commutative. A Bruck loop in which the map $x\mapsto x+x=:2x$ is invertible is called a \emph{uniquely $2$-divisible Bruck loop}. Finite uniquely $2$-divisible Bruck loops have odd order \cite[Prop.~1]{Glauberman}. An identity these loops adhere to (cf. \cite[Lmm.~1]{Glauberman}) which we will employ in Section \ref{SubSubSec:BruckLpIsotopes} is
\begin{equation}\label{Eq:BruckLaw1}
    2(x+y)=x+(2y+x).
\end{equation}
If $(Q,*, \backslash_*, \rd)$ is a left symmetric Latin quandle with fixed element $e\in Q,$ then
\begin{equation}\label{Eq:BruckLoopIsotope}
    x+y=(x\rd e)*(ey)
\end{equation}
endows $Q$ with the structure of a uniquely $2$-divisible Bruck loop whose identity element is $e$. Conversely, given a uniquely $2$-divisible Bruck loop $(Q, +, e),$ the operation
\begin{equation}\label{Eq:LSLQIsotope}
    x* y=2x-y
\end{equation}
 is a left-symmetric Latin quandle multiplication. These two constructions are mutually inverse \cite[Thm.~1]{StuhlVoj}.

\begin{example}[\cite{KinyonNagy}]\label{Ex:Bpq}
Suppose $p>q$ are odd primes with $q\mid p^2-1.$ A unique nonassociative Bruck loop of order $pq$ --which we refer to as $B_{p, q}$-- exists and its multiplication may be defined in terms of the ring $\mathbb{Z}/_q \times \mathbb{Z}/_p$. First, select a primitive $q$th root of unity in a quadratic extension of $\mathbb{Z}/_p$; call it $\omega\in\mathbb{Z}/_p[\sqrt{t}]$. Then for $i\in \mathbb{Z}/_q$, set $\theta_i=\frac{2}{\omega^i+\omega^{-i}}\in \mathbb{Z}/_p$. Now we may define the Bruck loop operation by
\begin{equation}\label{Eq:Bpq}
    (i, j)\boxplus(k, l)=\left(i+k, \left(\frac{\theta_k+\theta_{i+k}}{\theta_k+\theta_k\theta_i}\right)j+\frac{\theta_{i+k}}{\theta_k}l\right).
\end{equation}
The identity element of $B_{p, q}$ is $(0, 0)$, and inverses coincide with those of the group $\mathbb{Z}/_q\times \mathbb{Z}/_p$.
\end{example}

\emph{Moufang loops} satisfy a stronger condition than \eqref{Eq:LBolLaw}:
\begin{equation}\label{Eq:MoufangLaw}
        x(y(xz))=((xy)x)z.
\end{equation}
Any Moufang loop is an IP loop. A loop is a \emph{commutative Moufang loop} (CML) if and only if it satisfies
\begin{equation}\label{Eq:Manin}
    x^2(yz)=(xy)(xz).
\end{equation}

Within the theory of commutative Moufang loops (CML's), those of exponent $3$ play a highly specialized role. To explain this specialized role, consider a CML $Q$, and its \emph{associator} $(x, y, z)=((xy)z)^{-1}(x(yz))$.  Then the \emph{derived subloop} $Q_1=\{(x, y, z)\mid x, y, z\in Q\}$ is the first component of the \emph{lower central series} $Q=Q_0,\dots, Q_i=\{(x, y, z)\mid x\in Q_{i-1}, y, z \in Q\}, \dots$.  The derived subloop has exponent $3$ \cite[Lemma 5.7]{Bruck58}.  The Bruck-Slaby theorem \cite[Theorem 10.1]{Bruck58} states that any CML on $n$ generators is nilpotent of class less than $n$.  In fact, the free CML on $n$ generators is of nilpotency class $n-1$, and for each $1\leq i\leq n-2$, the quotient $Q_{i}/Q_{i+1}$ is an elementary abelian group of exponent $3$ \cite{Sm78}.

\section{SYBE solutions from symmetric spaces: nondegenerate, braided, dihedral sets}\label{Sec:BDS}

\subsection{Basic Examples} \label{SubSec:Examples}
The purpose of this section is to introduce some basic examples of nondegenerate braided dihedral sets (BDS) and braided triality sets (BTS), with a focus on those which are not Latin. An obvious first step is to characterize the derived BDS and BTS.

\begin{proposition}\label{Prop:DerBTS/BDS}
Let $r:Q^2\to Q^2; (x, y)\mapsto (x\circ y, x)$ be a derived map. The pair $(Q, r)$ is
\begin{itemize}
    \item[(a)] a BDS if and only if $(Q, \circ)$ is a left-symmetric rack;
    \item[(b)] it is a BTS if and only if $(Q, \circ)$ is a distributive, totally symmetric quasigroup.
\end{itemize}

In particular, a derived BTS is necessarily Latin, and it has the form $$r(x, y)=(-x-y, x),$$ where $(Q, +)$ is a $\CMLtThm$.
\end{proposition}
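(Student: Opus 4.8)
The plan is to substitute the defining relation $x\bullet y=x$ of a derived map into the braided, dihedral, and triality identities of Definition \ref{Def:InfixSoltns} and simply read off which quasigroup laws remain. For $r(x,y)=(x\circ y,x)$ the map $R^\bullet_q$ is the identity on $Q$, so right nondegeneracy is automatic and $(Q,r)$ is nondegenerate exactly when $(Q,\circ)$ is a left quasigroup. A direct substitution shows that \eqref{Eq:YB2} and \eqref{Eq:YB3} become trivial, while \eqref{Eq:YB1} collapses to the left distributive law \eqref{Eq:LD}; hence $(Q,r)$ is a nondegenerate braiding iff $(Q,\circ)$ is a rack. Likewise \eqref{Eq:Di1} is automatic and \eqref{Eq:Di2} becomes the left-symmetry law \eqref{Eq:LS}. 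Since a left-symmetric magma is automatically a left quasigroup ($L^\circ_x$ is an involution, so $y\backslash_\circ x=y\circ x$), the left-nondegeneracy clause is subsumed, and this proves (a): $(Q,r)$ is a BDS iff $(Q,\circ)$ is a left-symmetric rack.

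For (b) I would continue in the same vein. Over a derived map, \eqref{Eq:Tri1} reduces to the semisymmetry identity \eqref{Eq:SS2}, namely $(x\circ y)\circ x=y$, and \eqref{Eq:Tri2} reduces to commutativity $x\circ y=y\circ x$. Combined with part (a), $(Q,r)$ is a BTS iff $(Q,\circ)$ satisfies left distributivity, left symmetry, semisymmetry, and commutativity. By the remark preceding Lemma \ref{Lmm:CommiffLCisRS}, any two of \eqref{Eq:LS}, \eqref{Eq:RS}, \eqref{Eq:SS1} imply the third, so left symmetry together with semisymmetry already forces right symmetry; thus $(Q,\circ)$ is totally symmetric, and since totally symmetric quasigroups are commutative, the fourth requirement is redundant. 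Hence $(Q,r)$ is a BTS iff $(Q,\circ)$ is a distributive totally symmetric quasigroup. In particular $(Q,\circ)$ is then a genuine two-sided quasigroup, so such a solution is Latin, as claimed.

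It remains to identify distributive totally symmetric quasigroups with the maps $(x,y)\mapsto -x-y$ on a $\CMLtThm$. First I would record that $(Q,\circ)$ is idempotent: applying right distributivity (available by commutativity) with $y=z=x$ gives $(x\circ x)\circ x=(x\circ x)\circ(x\circ x)$, and left-cancelling $x\circ x$ yields $x=x\circ x$. So $(Q,\circ)$ is an idempotent left-symmetric Latin quandle, i.e.\ a distributive Steiner quasigroup, and the isotopy \eqref{Eq:BruckLoopIsotope} attaches to it (for any fixed $e\in Q$) a uniquely $2$-divisible Bruck loop $(Q,+,e)$ with $x\circ y=2x-y$; conversely, for a $\CMLtThm$ the identity \eqref{Eq:Manin} together with $2x=-x$ makes $(x,y)\mapsto-(x+y)$ a distributive totally symmetric quasigroup, which is the routine direction. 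The main obstacle is the remaining implication: one must show that total symmetry of $x\circ y=2x-y$ forces $(Q,+)$ to have exponent $3$ and the Moufang property, so that $2x=-x$ and $x\circ y=-x-y$. This is exactly the classical coordinatization of Hall triple systems by commutative Moufang loops of exponent $3$, and I would either grind it out through \eqref{Eq:BruckLoopIsotope}, \eqref{Eq:BruckLaw1}, and \eqref{Eq:PowerAssoc}, or invoke the known correspondence between distributive Steiner quasigroups and $\CMLtThm$'s (Belousov's theorem that distributive quasigroups are $\CMLThm$-isotopes, see \cite[Chapter 6]{Shcherb_book}, together with \cite{Bruck58, Hall}), to obtain the stated form $r(x,y)=(-x-y,x)$.
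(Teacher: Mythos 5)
Your proposal is correct and follows essentially the same route as the paper: substitute $x\bullet y=x$ into \eqref{Eq:YB1}--\eqref{Eq:YB3}, \eqref{Eq:Di1}--\eqref{Eq:Di2}, and \eqref{Eq:Tri1}--\eqref{Eq:Tri2} to read off left distributivity, left symmetry, semisymmetry, and commutativity, then reduce to total symmetry. For the final identification with $r(x,y)=(-x-y,x)$ over a $\CMLt$, the paper likewise defers to the classical coordinatization of distributive totally symmetric quasigroups (citing \cite[Prop.~2.1]{Donovan}), so your appeal to that folklore is exactly what is done there.
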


\begin{proof}
    (a) Recall that a derived map is a solution if and only if $(Q, \circ)$ left distributive. Moreover, the derived formulation of \eqref{Eq:Di1} is trivial, while, for \eqref{Eq:Di2} it is precisely the left-symmetric identity $x\circ(x\circ y)=y$, which implies $(Q, \circ)$ is a left quasigroup. Thus, $(Q, \circ)$ is a left-symmetric rack if and only if $(\tau r)^2=1_{Q^{2}}$.

    (b) The derived version of \eqref{Eq:Tri1} is the semisymmetric identity $(x\circ y)\circ x=y$, and the derived version of \eqref{Eq:Tri2} is equivalent to commutativity $x\circ y=y\circ x$. A magma that is semisymmetric is necessarily a quasigroup, and the addition of commutativity forces total symmetry on $(Q, \circ)$. That in a totally symmetric, left distributive quasigroup $(Q, \circ)$, the operation $x\circ y$ must take the form $-x-y$ over some $\CMLt$ is a folkloric result from quasigroup theory. See, for instance, \cite[Prop. 2.1]{Donovan}.
\end{proof}

\begin{example}\label{Ex:Derived}
   We give some instances of derived solutions.
   \begin{itemize}
       \item[(a)] The trivial solution $\tau(x, y)=(y, x)$ is a derived BDS.
       \item[(b)] Taking addition over $\mathbb{Z}/_n$, BDS of the form $r(x, y)=(2x-y, x)$ correspond to the so-called dihedral quandles. These are BTS precisely when addition occurs in $(\mathbb{Z}/_3)^n$.
       \item[(c)] The smallest derived BDS which is not a quandle has order $2$. If $Q=\{0, 1\}$, then $L^\circ_0=L^\circ_1=(01)$, while $R^\circ_0$ is the constant function mapping onto $1$ and $R^\circ_1$ is the constant function mapping onto $0$.
       \item[(d)] The smallest derived BDS which is not a quandle and has non-constant right multiplication maps has order $4$. The multiplication table for $(Q, \circ)$ is given below.
          \begin{center}
    \begin{tabular}{c||c|c|c|c}
        $\circ$ & 0 & 1 & 2 & 3\\
        \hline
        \hline
        0 & 1 & 0 & 2 & 3\\
        \hline
        1 & 1  & 0 & 2 & 3 \\
        \hline
        2 & 0 & 1 & 3 & 2\\
        \hline
        3 &0 &1 & 3 & 2
    \end{tabular}
    \end{center}
   \end{itemize}
\end{example}

\begin{example}\label{Ex:LF-leftnotDerived}
Here is a nondegenerate, nonderived BDS in which $x\bullet y\neq x^S$. As we will see in Section \ref{Sec:LBDS}, this is impossible in the Latin case. Moreover, $(Q, \circ, \backslash_\circ)$ is an LF-left quasigroup, so we have a non-derived example that shows the necessity of the Latin assumption in Proposition \ref{Prop:F-qugpsiffSqmap}.\\
   \begin{center}
    \begin{tabular}{c||c|c|c}
        $\circ$ & 0 & 1 & 2\\
        \hline
        \hline
        0 & 1 & 0 & 2\\
        \hline
        1 & 1  & 0 & 2 \\
        \hline
        2 & 0 & 1 & 2
    \end{tabular}
    \phantom{SPACE}
        \begin{tabular}{c||c|c|c}
        $\bullet$ & 0 & 1 & 2\\
        \hline
        \hline
        0 & 1 & 1 & 0\\
        \hline
        1 & 0  & 0 & 1 \\
        \hline
        2 & 2 & 2 & 2
    \end{tabular}
    \end{center}
\end{example}

\begin{example}\label{Ex:ArbOrder}
 We now explain some more details behind a class of examples mentioned briefly in the introduction. It will follow that we can have BDS of arbitrary order. Let $n\geq 1$, and let $+$ refer to addition modulo $n$. Define $r:(\mathbb{Z}/_n)^2\to (\mathbb{Z}/_n)^2; (x, y)\mapsto (2x+y, -x)$. By induction, $r^k(x, y)=((k+1)x+ky, -kx-(k-1)y)$. It follows, then, that $|r|=n$. What is more, if we take $+$ to stand for addition on $\mathbb{Z}$, then $r$ has infinite order. Note also that $r$ is a Latin solution if and only if $n$ is odd.
\end{example}

\subsection{Solutions from symmetric spaces}\label{SubSec:SymmSpaces}
The concept of symmetric space offers a powerful, unifying framework for aspects of Lie groups and Riemannian manifolds. Loos's purely algebraic definition \cite[Sec~2.1.1]{LoosI} best suits our purposes, as it extends the concept to discrete spaces. 
\begin{definition}\label{Def:SymmSpace}
A magma $(Q,*)$ is a \emph{symmetric space} if it is 
a left symmetric quandle.
If $Q$ is a manifold, we require $*:Q^2\to Q$ to be a differentiable map, and that for all $x\in Q$, there is a neighborhood, $U$, of $x$ such that if $y\in U$, and $x* y=y,$ then $y=x.$
\end{definition}

Equivalently, a symmetric space $(Q, s)$ consists of a set $Q$ and a family of involutive \emph{symmetries around $x$} (for $x\in Q$): $s=\{s_x:Q\to Q\mid x\in Q\}$ such that each $s_x$ fixes $x$, and for every $x, y\in Q$ the symmetries around $x$ and $y$ adhere to the distributivity condition

\begin{equation}\label{Eq:SSD}
    s_{s_{x}(y)}(s_x(z))=s_x(s_y(z)).
\end{equation}
Moreover, if $Q$ is a manifold, we require each $s_x$ to be differentiable and that there be a neighborhood in which $x$ is the sole fixed point of $s_x$.  A \emph{pointed} symmetric space $(Q, s, o)$ is a symmetric space with a fixed \emph{base point} $o\in Q$.

\begin{proposition}\label{Prop:SymmSp}
Let $(Q, s, o)$ be a pointed symmetric space.  Then $(x, y)\mapsto (s_x(y), x)$ and $(x, y)\mapsto (s_x(s_o(y)), s_o(x))$ furnish nondegenerate, braided, dihedral sets on $Q$.
\end{proposition}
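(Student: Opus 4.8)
The plan is to treat the two maps separately, writing $x*y:=s_x(y)$ so that a pointed symmetric space is exactly a left-symmetric quandle $(Q,*)$ --- i.e. $x*(x*y)=y$, $x*x=x$, and $x*(y*z)=(x*y)*(x*z)$ --- together with a distinguished point $o$, which singles out the map $\sigma:=s_o=L^*_o$. The one structural fact I would isolate first, to be used throughout, is that $\sigma$ is an \emph{involutory automorphism} of $(Q,*)$: setting $x=o$ in the distributivity axiom \eqref{Eq:SSD} gives $s_o(y)*s_o(z)=s_o(y*z)$, and $\sigma^2=1_Q$ is the left-symmetry axiom; equivalently, $s_o\circ s_x=s_{s_o(x)}\circ s_o$ for all $x$.

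For the first map $r_1(x,y)=(s_x(y),x)=(x*y,x)$, I would simply invoke Proposition \ref{Prop:DerBTS/BDS}(a): $r_1$ is a derived map, and $(Q,*)$, being a left-symmetric quandle, is in particular a left-symmetric rack, so $(Q,r_1)$ is a BDS. Nondegeneracy is immediate --- $L^*_x=s_x$ is an involution, hence bijective, so $*$ is a left-quasigroup operation, and the right component $x\bullet y=x$ has $R^\bullet_q=1_Q$.

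For the second map $r_2(x,y)=(s_x(s_o(y)),s_o(x))$, the idea is to exhibit $r_2$ as a conjugate of $r_1$. A direct check, using that $\sigma$ is an automorphism, gives the two presentations
\[
r_2=(1_Q\times\sigma)\circ r_1\circ(1_Q\times\sigma)=(\sigma\times 1_Q)\circ r_1\circ(\sigma\times 1_Q).
\]
Passing to $Q^3$, let $T$ be the involution of $Q^3$ acting as $\sigma$ on the \emph{middle} coordinate and trivially on the other two. Then $r_2^{12}=r_2\times 1_Q=T\,r_1^{12}\,T$ (from the first presentation) and $r_2^{23}=1_Q\times r_2=T\,r_1^{23}\,T$ (from the second), so, since $T^2=1_{Q^3}$, conjugating the braid relation $r_1^{12}r_1^{23}r_1^{12}=r_1^{23}r_1^{12}r_1^{23}$ by $T$ yields the braid relation for $r_2$. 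Nondegeneracy is again routine: $L^\circ_x$ is the composite $s_x\circ s_o$, a bijection, and $R^\bullet_q$ is the bijection $\sigma$. Finally, the dihedral identities \eqref{Eq:Di1}-\eqref{Eq:Di2} for $r_2$ take one line each via $s_o\circ s_x=s_{s_o(x)}\circ s_o$: indeed $(x\bullet y)\bullet(x\circ y)=s_o(s_o(x))=x$ and $(x\bullet y)\circ(x\circ y)=s_{s_o(x)}\big(s_o(s_x(s_o(y)))\big)=s_{s_o(x)}\big(s_{s_o(x)}(y)\big)=y$.

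I do not expect a genuine obstacle; the only step that needs care is the coordinate bookkeeping in passing from $r_2$ on $Q^2$ to $r_2^{12}$ and $r_2^{23}$ on $Q^3$ --- it is exactly the coincidence that both displayed presentations of $r_2$ turn into conjugation by ``$\sigma$ on the middle coordinate'' that collapses the braid relation for $r_2$ onto the one for $r_1$. As an alternative to this conjugation argument, one can verify \eqref{Eq:YB1}-\eqref{Eq:YB3} for $r_2$ by hand: \eqref{Eq:YB3} is trivial, \eqref{Eq:YB2} is the automorphism property of $\sigma$, and \eqref{Eq:YB1} unwinds --- after applying that automorphism property once --- to a single instance of the distributivity axiom \eqref{Eq:SSD}.
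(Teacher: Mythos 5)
Your proof is correct, and for the second map it takes a genuinely different route from the paper. The paper verifies both maps by brute force: it computes all three entries of $r^{12}r^{23}r^{12}(a,b,c)$ and $r^{23}r^{12}r^{23}(a,b,c)$ and matches them using \eqref{Eq:SSD}, the hardest part being a five-step chain of equalities for the first entry of the second map; the dihedral identities are then checked directly, exactly as you do. You instead (i) dispatch the first map by citing Proposition \ref{Prop:DerBTS/BDS}(a) --- a legitimate shortcut the paper declines to take, presumably because that proposition appears just above and the authors wanted a self-contained illustration --- and (ii) handle the second map by observing that $\sigma=s_o$ is an involutory automorphism intertwining $\sigma\circ s_x=s_{\sigma(x)}\circ\sigma$, so that $r_2$ admits the two presentations $(1_Q\times\sigma)r_1(1_Q\times\sigma)=(\sigma\times 1_Q)r_1(\sigma\times 1_Q)$, both of which lift on $Q^3$ to conjugation by the \emph{same} involution $T$ (namely $\sigma$ in the middle coordinate); the braid relation for $r_2$ then falls out of the one for $r_1$ by conjugation. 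I checked the two presentations and the identifications $r_2^{12}=Tr_1^{12}T$, $r_2^{23}=Tr_1^{23}T$: they are correct, and this is the crux of your argument. What your approach buys is conceptual clarity --- it isolates the base-point construction as a twist of the derived solution by the involutive automorphism $s_o$, and explains \emph{why} the messy first-entry computation in the paper must close up; it would also generalize verbatim to twisting any derived BDS by an involutive automorphism satisfying the analogous intertwining. What the paper's computation buys is self-containedness: it never needs to name $\sigma$ as an automorphism or set up the $Q^3$ bookkeeping, which, as you note yourself, is the one place in your argument where a coordinate slip would be fatal.
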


\begin{proof}
Begin by letting $r:(x, y)\mapsto(s_x(y), x)$. 
We have, $L^\circ_x=s_x$ and $R^\bullet_y=1_Q$. Hence, $(Q,r)$ is nondegenerate. Consider $(a,b,c)\in Q^3$. Then, by direct calculations in \eqref{Eq:SetBraidEqn} with one application of \eqref{Eq:SSD}, we obtain $(Q,r)$ is a braiding:
\begin{align*}
r^{12}r^{23}r^{12}(a,b,c)&=r^{12}r^{23}(s_a(b),a,c)=r^{12}(s_a(b),s_a(c),a)=(s_{s_a(b)}(s_a(c)),s_a(b),a)=\\
&=(s_a(s_b(c)),s_a(b),a)=r^{23}r^{12}r^{23}(a,b,c).
\end{align*}
And, finally, $(Q,r)$ is dihedral, again due to direct calculations and involutivity of symmetries. Namely, for $(a,b)\in Q^2$:
\begin{align*}
(\tau r)^2(a,b)=\tau r\tau(s_a(b),a)=\tau r(a,s_a(b))=\tau(s_a(s_a(b)),a)=(a,b).
\end{align*}
Now, we let $r:(x, y)\mapsto (s_x(s_o(y)), s_o(x))$, i.e. $L^\circ_x=s_x\circ s_o$ and $R^\bullet_y=s_o$.  Because $s_x$ and $s_o$ are invertible for all $x\in Q$, $(Q, r)$ is nondegenerate.  Consider $(a, b, c)\in Q^3$.  The third entry of both $X:=r^{12}r^{23}r^{12}(a, b, c)$ and $Y:=r^{23}r^{12}r^{23}(a, b, c)$ is $s^2_o(a)=a$.  The second entry of $X$ is $s_o(s_a(s_o(b)))$, while the second entry of $Y$ is $s_{s_o(a)}(b)=s_{s_o(a)}(s_o(s_o(b))=s_o(s_a(s_o(b)))$, by \eqref{Eq:SSD}.  Finally, note
\begin{align*}
    s_{s_a(s_o(b))}(s_o(s_{s_o(a)}(s_o(c))))&= s_{s_a(s_o(b))}(s_o(s_o(s_a(c))))\\
    &=s_{s_a(s_o(b))}(s_a(c))\\
    &=s_a(s_{s_o(b)}(c))\\
    &=s_a(s_{s_o(b)}(s_o(s_o(c))))\\
    &=s_a(s_o(s_b(s_o(c)))).
\end{align*}
The first expression is the first entry of $X$, while the last is the first entry of $Y$.  We are left to verify the dihedral condition.  Indeed, for $(a,b)\in Q^2$, $(\tau r)^2(a, b)=\tau r(s_o(a), s_a(s_o(b)))=(s^2_o(a), s_{s_o(a)}(s_o(s_a(s_o(b)))))=(a, s_o(s_a(s_a(s_o(b)))))=(a, b)$.
\end{proof}

\begin{example}(\cite[Example II.2.3]{LoosI})\label{Ex:SymmSpace}
    The following example has a similar flavor to the Latin solutions that are to come. Let $Q$ be the unit sphere in an inner-product space, $V$. For each $v\in Q$, one may define the Householder reflection in $v$ as the involution $H_v(x)=2\langle x, v\rangle v-x$ that reflects vectors across the hyperplane orthogonal to $v$. Fixing a base point $o\in Q$, gives BDS
    \begin{align*}
        (x, y)&\mapsto (H_x(y), x) \\
        (x, y)&\mapsto (H_x(H_o(y)), H_o(x)).
    \end{align*}
In fact, if we extend our scope to all $V$, we lose the topology of the symmetric space, but gain a linear BDS. That is, $(H_x\otimes 1_V)\tau$ and $(H_xH_o\otimes H_o)\tau$ are solutions of \eqref{Eq:SetBraidEqn} in the category of of vector spaces under the tensor product.
\end{example}

\section{Bruck loops and Latin braided dihedral sets}\label{Sec:LBDS}

\subsection{Latin braided dihedral sets}\label{SubSec:BDS}

\begin{lemma}\label{Lmm:DihedLmm1}
Let $(Q, r)$ be an LBDS.  Then all of the following hold:
\begin{itemize}
        \item[(a)] $x\bullet y=x^S$.
        \item[(b)]  $S^2=1_Q$.
        \item[(c)] $r$ is nondegenerate.
\end{itemize}
\end{lemma}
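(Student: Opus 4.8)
The plan is to first extract a formula for $\bullet$ from the dihedral relations, then use the braid relations to pin $\bullet$ down, and finally read off nondegeneracy. Throughout write $x^{S}=x\ld x$.

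\textit{Step 1 (dihedral consequences and (b)).} Identity \eqref{Eq:Di2} reads $(x\bullet y)\circ(x\circ y)=y$, and applying $(R^{\circ}_{x\circ y})^{-1}$, which exists because $(Q,\circ,\ld,/^{\circ})$ is a quasigroup, gives
$$(\star)\qquad x\bullet y=y/^{\circ}(x\circ y).$$
Specializing \eqref{Eq:YB1} to $y=x^{S}$, and using $x\circ x^{S}=x$ (axiom \textup{(SL)}), it becomes $x\circ(x^{S}\circ z)=x\circ\bigl((x\bullet x^{S})\circ z\bigr)$; cancelling $L^{\circ}_{x}$ and then $R^{\circ}_{z}$ yields $x\bullet x^{S}=x^{S}$. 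Putting this into \eqref{Eq:Di2} at $y=x^{S}$ gives $x^{S}\circ x=x^{S}$, and comparing with $x^{S}\circ(x^{S})^{S}=x^{S}$ (again \textup{(SL)}) and cancelling $L^{\circ}_{x^{S}}$ gives $(x^{S})^{S}=x$, which is \textup{(b)}.

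\textit{Step 2 (the crux: (a)).} Combining $(\star)$ with \eqref{Eq:Di2} shows that \textup{(a)} is equivalent to
$$x^{S}\circ(x\circ y)=y\qquad(x,y\in Q),$$
i.e.\ to $L^{\circ}_{x^{S}}=(L^{\circ}_{x})^{-1}$; granting \eqref{Eq:YB1} and Latinness, this in turn is equivalent to $(Q,\circ,\ld)$ being an LF-left quasigroup, and also to $S\in\mathrm{End}(Q,\circ)$. So far only \eqref{Eq:YB1} and the dihedral relations have been used, and these cannot suffice, since $x\bullet y=x^{S}$ may fail for a merely nondegenerate BDS (Example \ref{Ex:LF-leftnotDerived}); the Latin hypothesis must be brought in through \eqref{Eq:YB2} (and possibly \eqref{Eq:YB3}). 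Substituting $(\star)$ into \eqref{Eq:YB2}, rewriting the nested term by \eqref{Eq:YB1} as $(x\circ y)\circ\bigl((x\bullet y)\circ z\bigr)=x\circ(y\circ z)$, and then cancelling with the divisions of $\circ$, one obtains after some computation the auxiliary identities
$$(x\circ y)^{S}=(x\bullet y)\circ y^{S},\qquad y/^{\circ}w=w^{S}/^{\circ}y^{S},\qquad q\circ w^{S}=(q\ld w)^{S};$$
combining these (with \textup{(b)}, and \eqref{Eq:YB3} if necessary) yields $x^{S}\circ(x\circ y)=y$, whence \eqref{Eq:Di2} gives $(x\bullet y)\circ(x\circ y)=y=x^{S}\circ(x\circ y)$ and cancelling $R^{\circ}_{x\circ y}$ produces \textup{(a)}. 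I expect this step to be the main obstacle: the delicate point is to arrange the cancellations so that the auxiliary variable $z$ drops out and what remains is an $S$-endomorphy statement rather than a relation still involving $\bullet$; this is precisely where the braid and dihedral relations have to be played off against one another, and where right-divisibility of $\circ$ is indispensable. (Once $x^{S}\circ(x\circ y)=y$ is available it is exactly the hypothesis of Lemma \ref{Lmm:IPMapInvo} for $(Q,\circ,\ld)$, which re-derives \textup{(b)} and also gives $x\circ(x^{S}\circ y)=y$.)

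\textit{Step 3 ((c)).} Since $(Q,\circ,\ld)$ is a left quasigroup, $(Q,r)$ is left nondegenerate. By \textup{(a)}, every right translation $R^{\bullet}_{q}\colon x\mapsto x\bullet q$ coincides with the single map $S$, which is bijective by \textup{(b)}; hence $(Q,r)$ is right nondegenerate too, and therefore nondegenerate.
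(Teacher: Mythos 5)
Your Steps 1 and 3 are correct. The derivation of $x\bullet x^S=x^S$ from \eqref{Eq:YB1} and of $S^2=1_Q$ from \eqref{Eq:Di2} is sound and in fact coincides with what the paper does in its Appendix (Lemmas \ref{Lmm:STmutinv} and \ref{Lmm:DihedSinv}); part (c) does follow from (a) and (b) exactly as you say. The reduction at the start of Step 2 is also right: given $(\star)$, statement (a) is equivalent to $x^S\circ(x\circ y)=y$.

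Step 2, however, is a genuine gap, not a proof. You assert that ``after some computation'' one obtains the three auxiliary identities $(x\circ y)^S=(x\bullet y)\circ y^S$, $y/^\circ w=w^S/^\circ y^S$, and $q\circ w^S=(q\ld w)^S$, and that combining them yields the target identity; but only the first of these is actually derivable at this stage (it follows from \eqref{Eq:YB1} by writing $(x\bullet y)\circ z=(x\circ y)\ld(x\circ(y\circ z))$ and setting $z=y^S$). The second is precisely Lemma \ref{Lmm:DihedLmm2}(b) of the paper, whose proof there \emph{uses} $x^S\circ(x\circ y)=y$, i.e.\ the statement you are trying to establish; and the third, granted that $S$ is an endomorphism, is literally equivalent to $q^S\circ(q\circ w^S)=w^S$, again the target. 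So as written the argument is circular at exactly the point you yourself flag as ``the main obstacle.'' The paper's route is genuinely different and avoids this: it never tries to prove the $S$-identity directly, but instead works with $\bullet$-expressions, establishing $x\bullet(x\bullet y)=(x\bullet y)\circ x$, $(x\bullet y)\bullet x=x\circ(x\bullet y)$, $(x\bullet y)\bullet y^S=x$ (Lemma \ref{Lmm:DihedIdent}) and the mixed identity $(x\circ y)\bullet z=(x\bullet z)\circ(y\bullet(x\circ z))$ (Lemma \ref{Lmm:ident}), all from \eqref{Eq:YB1}--\eqref{Eq:YB3} and \eqref{Eq:Di1}--\eqref{Eq:Di2} with quasigroup cancellation, and only then extracts $x\bullet y=x^S$ by a substitution chain. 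To repair your proof you would need to either supply non-circular derivations of your second and third auxiliary identities from \eqref{Eq:YB2}--\eqref{Eq:YB3}, or switch to a chain of $\bullet$-identities of the kind the paper uses.
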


\begin{proof}
A proof of the fact that $x\bullet y=x^S$ may be found in Appendix \ref{appendix:human_proof}. It is based on output from \texttt{Prover9} \cite{Prover9}.

With $x\bullet y=x^S$ established, we note that condition (b) is equivalent to LBDS axiom \eqref{Eq:Di1}. Moreover, all right translation maps with respect to $(Q, \bullet)$ coincide with $S$. In other words, the right translation maps of $(Q, \bullet)$ are invertible, $(Q, \bullet)$ is a right quasigroup, and $r$ is nondegenerate.
\end{proof}

\begin{lemma}\label{Lmm:DihedLmm2}
    In any LBDS $(Q, r)$, all of the following hold:
    \begin{itemize}
        \item[(a)] $x^S\circ (x\circ y)=x\circ(x^S\circ y)=y$.
        \item[(b)] $x/^\circ y=y^S/^\circ x^S$
        \item[(c)] $(x\circ y)\circ z=x\circ(y\circ(x\circ z))$
    \end{itemize}
\end{lemma}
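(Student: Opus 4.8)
The plan is to bootstrap everything from Lemma \ref{Lmm:DihedLmm1}, which tells us that in an LBDS we have $x\bullet y = x^S$, that $S^2 = 1_Q$, and that $r$ is nondegenerate; in particular the three braiding identities \eqref{Eq:YB1}--\eqref{Eq:YB3} and the two dihedral identities \eqref{Eq:Di1}--\eqref{Eq:Di2} all specialize by substituting $x\bullet y = x^S$. For part (a), I would first substitute $x \bullet y = x^S$ into the braiding identity \eqref{Eq:YB1}, which turns it into the LF-left-quasigroup identity $x\circ(y\circ z) = (x\circ y)\circ(x^S\circ z)$; this is exactly \eqref{Eq:LFNew}, so $(Q,\circ,\ld)$ is an LF-left quasigroup and Lemma \ref{Lmm:SEndo} applies, giving that $S$ is an endomorphism of $\circ$. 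To get the left-symmetry-type identity $x^S\circ(x\circ y) = y$, I would look at the dihedral identity \eqref{Eq:Di2}, $(x\bullet y)\circ(x\circ y) = y$; substituting $x\bullet y = x^S$ gives precisely $x^S\circ(x\circ y) = y$. Then Lemma \ref{Lmm:IPMapInvo}, applied to the left quasigroup $(Q,\circ,\ld)$ with diagonal map $S$, delivers the companion identity $x\circ(x^S\circ y) = y$ (its part (b)); alternatively one applies the already-established fact $S^2 = 1_Q$ together with $S$ being an endomorphism to the first identity. This proves (a).

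For part (b), the identity $x/^\circ y = y^S/^\circ x^S$ is the LBDS analogue of Lemma \ref{Lmm:CommiffLCisRS}(b)-type behavior twisted by $S$. Here is where I would use the Latin hypothesis in earnest: since $(Q,\circ,\ld,/^\circ)$ is a quasigroup, I can cancel on either side. Starting from part (a), $x\circ(x^S\circ y) = y$, I set $y := x^S \circ w$? Better: I would rewrite (a) as $x^S\circ(x\circ y) = y$, meaning $L^\circ_{x^S}$ is the inverse of $L^\circ_x$, i.e.\ $L^\circ_x{}^{-1} = L^\circ_{x^S}$, hence $x\ld y = x^S\circ y$. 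Now to reach $/^\circ$, I would combine this with the semisymmetric/biquandle-type relations or directly compute: write $a = x/^\circ y$, so $a\circ y = x$; I want to show $a = y^S /^\circ x^S$, i.e.\ $a \circ x^S = y^S$. Apply $S$ (an endomorphism) to $a\circ y = x$ to get $a^S\circ y^S = x^S$. This is not quite it, so instead I would feed $a\circ y = x$ into identity (c) once it is proved, or use the dihedral identities \eqref{Eq:Di1}--\eqref{Eq:Di2} in divisional form together with \eqref{Eq:DivTents}. The cleanest route: from \eqref{Eq:YB3} with $x\bullet y = x^S$ one gets $(x^S)^S\bullet z$ vs $(x^S)\bullet(y^S)$ collapsing trivially, so \eqref{Eq:YB3} gives nothing new; the useful structural input is \eqref{Eq:YB2}, which becomes $(x\circ y)^S = x^S\circ y^S$ (reconfirming Lemma \ref{Lmm:SEndo}). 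So (b) should follow by taking $x/^\circ y = z \iff z\circ y = x$, applying $S$: $z^S \circ y^S = x^S$ — wait, that gives $x^S /^\circ y^S = z^S$, not what we want. The correct manipulation must swap the roles via part (a): $z\circ y = x$ means $y = z^S\circ x$ (by (a) with $x\mapsto z$), hence $y^S = z\circ x^S$ (applying $S$, using $S^2=1$), hence $z = y^S/^\circ x^S$. That closes (b).

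For part (c), $(x\circ y)\circ z = x\circ(y\circ(x\circ z))$, I would start from the LF-left identity \eqref{Eq:LFNew} in the form $(x\circ y)\circ(x^S\circ z) = x\circ(y\circ z)$ and substitute $z := x\circ z'$: the left side becomes $(x\circ y)\circ(x^S\circ(x\circ z')) = (x\circ y)\circ z'$ by part (a), while the right side becomes $x\circ(y\circ(x\circ z'))$. Renaming $z'$ to $z$ gives exactly (c). The main obstacle I anticipate is part (b): it requires correctly tracking how $S$ and the divisions interact, and the temptation is to apply $S$ naively to $z\circ y = x$ — which yields the wrong twisted identity. The key is that one must first convert the right-division equation into a $\circ$-equation solved for the \emph{other} variable via part (a) (using that $L^\circ_x{}^{-1} = L^\circ_{x^S}$), and only then apply the endomorphism $S$ and reconvert. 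Parts (a) and (c) are essentially immediate substitutions into the braiding and dihedral axioms, so the whole lemma is short once (b) is handled with this care.
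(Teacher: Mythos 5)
Your proposal is correct and follows essentially the same route as the paper: (a) is \eqref{Eq:Di2} plus Lemma \ref{Lmm:IPMapInvo}, (c) is the LF identity \eqref{Eq:LFNew} with $z$ replaced by $x\circ z$ and collapsed via (a), and (b) ultimately rests on part (a), the endomorphism property of $S$ from Lemma \ref{Lmm:SEndo}, and the quasigroup division axioms, exactly as in the paper (which merely arranges the same ingredients as $a^S=b/^\circ(a\circ b)$ with $a=y/^\circ x$, $b=x$). The final argument you settle on for (b) --- rewrite $z\circ y=x$ as $y=z^S\circ x$ via (a), apply $S$ to get $y^S=z\circ x^S$, and read off $z=y^S/^\circ x^S$ --- is valid; only the exploratory false starts preceding it should be trimmed.
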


\begin{proof}
    (a) The fact that $x^S\circ (x\circ y)=y$ is the LBDS axiom $\eqref{Eq:Di2}$ recast in light of $x\bullet y=x^S$. Now, $x\circ(x^S\circ y)=y$ 
    follows by Lemma \ref{Lmm:IPMapInvo}.

  (b) Dividing $a^S\circ(a\circ b)=b$ on the right by $a\circ b$ gives $a^S=b/^\circ(a\circ b)$. Replace $a$ with $y/^\circ x$ and $b$ with $x$ in this statement, and use the fact that $S$ is an endomorphism (Lemma \ref{Lmm:SEndo}), and we have $x/^\circ y=x/^\circ((y/^\circ x)\circ x)=(y/^\circ x)^S=y^S/^\circ x^S$.

(c) This follows from the fact that $\circ$ adheres to \eqref{Eq:LFNew} and part (a): $x\circ(y\circ(x\circ z))=(x\circ y)\circ(x^S\circ(x\circ z))=(x\circ y)\circ z$.
\end{proof}

\noindent Thanks to Lemma \ref{Lmm:DihedLmm1}, LBDS is now understood to refer to nondegenerate, Latin solutions to the SYBE possessing the dihedral property.

We are now in a position to establish necessary and sufficient conditions on $(x\circ y, x\bullet y)$ to be an LBDS.

\begin{theorem}\label{Thm:LBDSClass}
The pair $(Q, r)$, with $r:(x, y)\mapsto (x\circ y, x\bullet y)$, constitutes an LBDS if and only if $x\bullet y=x^S$ and $(Q, \circ, \ld, /^\circ)$ is an LF-quasigroup in which
\begin{equation}\label{Eq:LBDSIP}
    x^S\circ (x\circ y)=y
\end{equation}
holds.
\end{theorem}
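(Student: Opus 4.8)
The plan is to prove both directions, relying heavily on the machinery already assembled in Sections~\ref{Sec:QugpsandLoops}--\ref{Sec:LBDS}. For the forward direction, suppose $(Q,r)$ is an LBDS. By Lemma~\ref{Lmm:DihedLmm1}(a) we immediately get $x\bullet y=x^S$. It remains to check that $(Q,\circ,\ld,/^\circ)$ is an LF-quasigroup satisfying \eqref{Eq:LBDSIP}. The latter identity is exactly Lemma~\ref{Lmm:DihedLmm2}(a) (equivalently, it is LBDS axiom~\eqref{Eq:Di2} rewritten via $x\bullet y=x^S$). For the LF-quasigroup claim: $(Q,\circ)$ is a two-sided quasigroup by the definition of Latin (Definition~\ref{Def:InfixSoltns}(c)), and since $x\bullet y=x^S$, the solution $r$ has the form $(x,y)\mapsto(x\circ y, x^S)$, so Proposition~\ref{Prop:F-qgpiffBraid} (or Proposition~\ref{Prop:F-qugpsiffSqmap}) tells us that $(Q,\circ,\ld)$ satisfies \eqref{Eq:LFNew}; together with the quasigroup structure this makes it an LF-quasigroup.

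For the converse, assume $x\bullet y=x^S$, that $(Q,\circ,\ld,/^\circ)$ is an LF-quasigroup, and that \eqref{Eq:LBDSIP} holds. I must verify that $r:(x,y)\mapsto(x\circ y, x^S)$ is (i) a braiding, (ii) Latin, (iii) nondegenerate, and (iv) dihedral. For (i), since $r$ has the form $(x,y)\mapsto(x\circ y,x^S)$ and $(Q,\circ,\ld)$ is an LF-left quasigroup, Proposition~\ref{Prop:F-qgpiffBraid} gives that $(Q,r)$ is a braided set. Claim~(ii) is immediate: $(Q,\circ)$ is a two-sided quasigroup by hypothesis. For (iii), all right translations of $(Q,\bullet)$ equal $S$, and $S$ is invertible because it is an involution: this follows from Lemma~\ref{Lmm:IPMapInvo}(a), whose hypothesis $x^S\circ(x\circ y)=y$ is precisely \eqref{Eq:LBDSIP} (noting the quandle-type identity $y\circ y^S=y$ holds since $x^S=x\ld x$ gives $x\circ x^S=x\circ(x\ld x)=x$). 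Hence $(Q,\bullet)$ is a right quasigroup and $r$ is nondegenerate.

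Finally for (iv), I must check the two dihedral identities \eqref{Eq:Di1} and \eqref{Eq:Di2} with $x\bullet y=x^S$ substituted. Identity~\eqref{Eq:Di2}, namely $(x\bullet y)\circ(x\circ y)=y$, becomes $x^S\circ(x\circ y)=y$, which is exactly \eqref{Eq:LBDSIP}. Identity~\eqref{Eq:Di1}, namely $(x\bullet y)\bullet(x\circ y)=x$, becomes $(x^S)^S=x$, i.e. $S^2=1_Q$, and this is Lemma~\ref{Lmm:IPMapInvo}(a) again. Thus all five conditions defining an LBDS hold, completing the proof. The only part requiring genuine care is keeping track of which previously-proved lemma supplies each implication—in particular, checking that the hypotheses of Lemma~\ref{Lmm:IPMapInvo} are met in the converse direction, since that lemma is what converts \eqref{Eq:LBDSIP} into the involutivity of $S$. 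I do not expect a serious obstacle: once $x\bullet y=x^S$ is in hand (carried over from Lemma~\ref{Lmm:DihedLmm1}), the statement is essentially a repackaging of Propositions~\ref{Prop:F-qgpiffBraid} and~\ref{Prop:F-qugpsiffSqmap} together with Lemma~\ref{Lmm:IPMapInvo}.
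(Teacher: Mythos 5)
Your proposal is correct and follows essentially the same route as the paper: the forward direction via Lemma~\ref{Lmm:DihedLmm1}, Proposition~\ref{Prop:F-qgpiffBraid}, and Lemma~\ref{Lmm:DihedLmm2}(a), and the converse via Proposition~\ref{Prop:F-qgpiffBraid} together with Lemma~\ref{Lmm:IPMapInvo} to convert \eqref{Eq:LBDSIP} into $S^2=1_Q$. Your additional explicit check of nondegeneracy in the converse is a harmless (and welcome) elaboration of what the paper leaves implicit.
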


\begin{proof}
If we assume $(Q, r)$ is an LBDS, then $x\bullet y=x^S$ is Lemma \ref{Lmm:DihedLmm1}. Therefore, the LF identity is Proposition \ref{Prop:F-qgpiffBraid}. Finally, $\eqref{Eq:LBDSIP}$ follows from Lemma \ref{Lmm:DihedLmm2}(a).

Conversely, assume $r(x, y)=(x\circ y, x^S)$, where $(Q, \circ, \ld, /^\circ)$ is an LF-quasigroup in which $x^S\circ(x\circ y)=y$. We have to show that each of \eqref{Eq:YB1}-\eqref{Eq:Di2} hold. Well, by Proposition \ref{Prop:F-qgpiffBraid}, $r$ is a braiding, so \eqref{Eq:YB1}-\eqref{Eq:YB3} hold. Furthermore, \eqref{Eq:Di2} is equivalent to $x^S\circ(x\circ y)=y$. Finally, Lemma \ref{Lmm:IPMapInvo} tells us that in any left quasigroup in which $x^S\circ(x\circ y)=y$ holds, we have $S^2=1_Q$, and this proves \eqref{Eq:Di1}.
\end{proof}

\begin{corollary}\label{Cor:LBDS-quasigroup}
There is a one-to-one correspondence between LBDS and the subvariety of quasigroups generated by \eqref{Eq:LFNew} and the identity
\begin{align}
    (x\backslash_*x) *(x* y)&=y.\label{Eq:SqinLIP}
\end{align}
\end{corollary}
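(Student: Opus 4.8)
The plan is to deduce Corollary~\ref{Cor:LBDS-quasigroup} directly from Theorem~\ref{Thm:LBDSClass} by showing that the two defining conditions of an LBDS from that theorem---namely, that $(Q,\circ,\backslash_\circ,/^\circ)$ is an LF-quasigroup satisfying \eqref{Eq:LBDSIP}, together with the stipulation $x\bullet y = x^S$---are captured precisely by the quasigroup variety axiomatized by \eqref{Eq:LFNew} and \eqref{Eq:SqinLIP}. Note first that \eqref{Eq:SqinLIP} is literally \eqref{Eq:LBDSIP} written out with $x^S = x\backslash_\circ x$ substituted in, since in a left nondegenerate solution we set $S = \text{Sq}_{\backslash_\circ}$ by definition. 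So the only real content is to turn the bijective correspondence into a clean statement: each LBDS $(Q,r)$ determines a quasigroup $(Q,\circ,\backslash_\circ,/^\circ)$ in this variety (by Theorem~\ref{Thm:LBDSClass}, since $\circ$ is a quasigroup operation, it is LF, and \eqref{Eq:LBDSIP} holds), and conversely each quasigroup $(Q,\circ,\backslash_\circ,/^\circ)$ in the variety determines an LBDS by setting $x\bullet y := x^S = x\backslash_\circ x$ and $r(x,y) := (x\circ y, x\bullet y)$---the point being that Theorem~\ref{Thm:LBDSClass}'s "if" direction applies verbatim.

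First I would state the two maps of the correspondence explicitly. In one direction, send an LBDS $(Q,r)$ to its associated quasigroup $(Q,\circ,\backslash_\circ,/^\circ)$, which by Lemma~\ref{Lmm:DihedLmm1}(a) and Theorem~\ref{Thm:LBDSClass} is an LF-quasigroup satisfying \eqref{Eq:SqinLIP}. In the other direction, send a quasigroup $(Q,\circ,\backslash_\circ,/^\circ)$ satisfying \eqref{Eq:LFNew} and \eqref{Eq:SqinLIP} to the pair $(Q, r)$ with $r(x,y) = (x\circ y,\, x\backslash_\circ x)$; Theorem~\ref{Thm:LBDSClass} (the converse direction) guarantees this is an LBDS, because $x^S\circ(x\circ y)=y$ is exactly \eqref{Eq:SqinLIP} and the LF identity is \eqref{Eq:LFNew}. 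Then I would check that these two assignments are mutually inverse: starting from an LBDS, extracting $\circ$ and rebuilding $r$ recovers the original $r$ because $x\bullet y = x^S = x\backslash_\circ x$ by Lemma~\ref{Lmm:DihedLmm1}(a); starting from a quasigroup in the variety, building $r$ and then extracting its $\circ$-operation recovers the original quasigroup since the left and right divisions of a quasigroup are determined by its multiplication.

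The only subtlety worth flagging---the "main obstacle," though it is minor---is the matter of what "one-to-one correspondence" means here and whether it should be upgraded to an equivalence respecting morphisms. At the level of the bare statement I would keep it as a bijection on underlying objects (with a fixed carrier set $Q$, or, more naturally, an isomorphism-respecting bijection), and simply remark that the correspondence is functorial: a homomorphism of LBDS restricts to a quasigroup homomorphism for $\circ$, and conversely a quasigroup homomorphism $h$ automatically commutes with $S$ (it preserves $\backslash_\circ$ by Remark (c) after Definition~\ref{Def:Qugp}), hence with $\bullet$, hence extends to a morphism of solutions. One should also confirm that nothing is lost by passing to the $\circ$-reduct, i.e. that $\bullet$ is genuinely recoverable; this is immediate from $x\bullet y = x\backslash_\circ x$, which does not depend on $y$. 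With those remarks in place the proof is essentially a two-line appeal to Theorem~\ref{Thm:LBDSClass}, and I would write it as such.
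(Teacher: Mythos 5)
Your proposal is correct and follows exactly the route the paper intends: the corollary is stated without proof as an immediate consequence of Theorem~\ref{Thm:LBDSClass}, and your two mutually inverse assignments ($r \mapsto (Q,\circ,\backslash_\circ,/^\circ)$ and $(Q,\circ,\backslash_\circ,/^\circ) \mapsto r(x,y)=(x\circ y, x\backslash_\circ x)$, using Lemma~\ref{Lmm:DihedLmm1}(a) to see that $\bullet$ is recoverable from $\circ$) are precisely the content being invoked. The remarks on functoriality are a harmless bonus beyond what the statement requires.
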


\begin{definition}\label{Def:LBDSQu}
 We refer to the variety of Corollary \ref{Cor:LBDS-quasigroup} as the \emph{variety of LBDS-quasigroups}.
\end{definition}

\begin{proposition}\label{Prop:DerRackProp1}
The derived rack $(Q, \rack)$ of an LBDS is a left symmetric, Latin quandle. Moreover, $(Q, \circ)$ is isotopic to this quandle.
\end{proposition}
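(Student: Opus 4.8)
The plan is to unpack the formula \eqref{Eq:StrRack} for the derived rack in the presence of the LBDS structure, simplify it using the identities already established for LBDS, and then recognize the resulting operation as a left-symmetric Latin quandle. First I would substitute $x\bullet y=x^S$ (Lemma \ref{Lmm:DihedLmm1}(a)) into the definition $x\rack y=x\circ(y\bullet(y\ld x))$. The inner term $y\bullet(y\ld x)$ becomes $y^S$ (since $x\bullet z=x^S$ for every $z$, and the right factor here is $y\ld x$), so the formula collapses dramatically to
\begin{equation*}
    x\rack y = x\circ y^S.
\end{equation*}
This is the key simplification that makes everything else routine.

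Next I would verify the three required properties of $(Q,\rack)$. Left symmetry: $x\rack(x\rack y)=x\circ(x\rack y)^S=x\circ(x\circ y^S)^S=x\circ(x^S\circ y^{S^2})=x\circ(x^S\circ y)=y$, using that $S$ is an endomorphism (Lemma \ref{Lmm:SEndo}), that $S^2=1_Q$ (Lemma \ref{Lmm:DihedLmm1}(b)), and Lemma \ref{Lmm:DihedLmm2}(a). Idempotence: $x\rack x=x\circ x^S=x$, by the definition of $x^S=x\ld x$ together with the (SL) identity $y*(y\ld x)=x$ applied with $y=x$. For the rack (left distributivity) axiom I would compute $x\rack(y\rack z)=x\circ(y\circ z^S)^S=x\circ(y^S\circ z)$ and $(x\rack y)\rack(x\rack z)=(x\circ y^S)\circ(x\circ z^S)^S=(x\circ y^S)\circ(x^S\circ z)$, and these agree by the LF identity \eqref{Eq:LFNew}. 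Since left symmetry already forces $(Q,\rack)$ to be a left quasigroup, it remains to confirm it is a genuine (two-sided) quasigroup; but $(f,g,h)=(1_Q,S,1_Q)$ is an isotopy from $(Q,\circ,\ld,/^\circ)$ onto $(Q,\rack)$, i.e. $x\rack y=x\circ S(y)$, and since $S$ is a bijection and $(Q,\circ)$ is Latin, $(Q,\rack)$ is a quasigroup isotopic to $(Q,\circ)$. This simultaneously proves the "Moreover" clause.

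I expect the only real obstacle to be bookkeeping with the $S$-conjugations: one must be careful to apply Lemma \ref{Lmm:SEndo} ($S$ an endomorphism of $\circ$) in exactly the right places and to track cancellations of $S^2$. There is nothing deep beyond that — the whole point is that \eqref{Eq:StrRack}, which in general involves both $\circ$, $\bullet$, and $\ld$, degenerates under $x\bullet y=x^S$ into the single clean expression $x\circ y^S$, after which left symmetry and the rack axiom fall out of \eqref{Eq:LFNew} and the inverse-property identity \eqref{Eq:LBDSIP}. One subtlety worth a sentence in the write-up: the formula \eqref{Eq:StrRack} as stated reads $x\rack y=x\circ(y\bullet(y\ld x))$, so one should double-check the parenthesization and confirm that the argument of $y\,\bullet\,(-)$ does not matter precisely because $R^\bullet$ is constant on the second coordinate — this is exactly the content of right-nondegeneracy being witnessed by $S$ in Lemma \ref{Lmm:DihedLmm1}.
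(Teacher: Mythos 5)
Your proposal is correct and follows essentially the same route as the paper: reduce \eqref{Eq:StrRack} to $x\rack y=x\circ y^S$ via Lemma \ref{Lmm:DihedLmm1}, then check idempotence, left symmetry (using Lemmas \ref{Lmm:SEndo}, \ref{Lmm:DihedLmm1}(b), \ref{Lmm:DihedLmm2}(a)), and Latin-ness via the bijection $S$. The only difference is that you also verify left distributivity directly from \eqref{Eq:LFNew}, which the paper omits since the derived operation of any left nondegenerate solution is already a rack by the cited result of Soloviev.
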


\begin{proof}
We have to prove $x\rack x=x$ and $x\rack(x\rack y)=y$ for all $x, y\in Q$.  By Lemma \ref{Lmm:DihedLmm1}, we have $x\rack y=x\circ y^S$ as a reformulation of \eqref{Eq:StrRack}. This makes the isotopism obvious. Note $x\rack x=x\circ (x\ld x)=x$ by the quasigroup axioms.  What's more, $x\rack(x\rack y)=x\rack (x\circ y^S)=x\circ(x\circ y^S)^S=x\circ(x^S\circ y^{S^{2}})=x\circ(x^S\circ y)=y$ by Lemma \ref{Lmm:DihedLmm2}(a). Finally, to prove $\rack$ is a Latin quandle operation, we note that $x/^{\rack}y=x/^\circ y^S$ is the right division for this operation.
\end{proof}

\begin{remark}\label{Rmk:SymmSpaceDerRack}
    Moreover, in both cases, i.e. for solutions originating from a symmetric space $(Q,s)$ or a pointed symmetric space $(Q,s,o)$, the derived rack operation is defined as $x\rack y=s_x(y)$. Compare this to the situation described in Proposition \ref{Prop:DerRackProp1}.
\end{remark}

\begin{proposition}\label{Prop:LBDSIdemp}
    Let $(Q, \circ, \ld, /^\circ)$ be an LBDS-quasigroup. Then $Q$ contains an idempotent element. In particular, $\emph{Sq}_{/^{\circ}}(Q)$ is the set of idempotents, and upon this set, $\circ$ coincides with the derived rack operation.
\end{proposition}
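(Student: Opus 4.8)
The plan is to study the right-divisional squaring map $\text{Sq}_{/^\circ}\colon x\mapsto x/^\circ x$ directly. By Corollary \ref{Cor:LBDS-quasigroup} together with Theorem \ref{Thm:LBDSClass}, I may treat $(Q,\circ,\ld,/^\circ)$ as an LF-quasigroup in which \eqref{Eq:LBDSIP} holds and set $r(x,y)=(x\circ y,x^S)$, which is then an LBDS; so every result of Section \ref{SubSec:BDS} is available. In particular $x^S=x\ld x$ is an endomorphism of $(Q,\circ)$ (Lemma \ref{Lmm:SEndo}) and the derived rack operation is $x\rack y=x\circ y^S$ (Proposition \ref{Prop:DerRackProp1}). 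The crux of the argument is the claim that $\text{Sq}_{/^\circ}(x)^S=\text{Sq}_{/^\circ}(x)$ for every $x$, from which idempotence follows immediately.

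To prove the crux, put $e=x/^\circ x$, so that $e\circ x=x$ by axiom (SR). Instantiating the inverse-property identity \eqref{Eq:LBDSIP} in the form $e^S\circ(e\circ z)=z$ at $z=x$ and substituting $e\circ x=x$ yields $e^S\circ x=x$. Hence $R^\circ_x$ sends both $e$ and $e^S$ to $x$; since $(Q,\circ)$ is Latin, $R^\circ_x$ is injective, so $e^S=e$. Now $e^S=e$ is precisely the assertion that $e$ is idempotent with respect to $\ld$, so Lemma \ref{Lmm:IdempQugp} upgrades it to $e\circ e=e$. I expect this brief cancellation step to be the only substantive point in the proof; as an alternative one can derive $e^S=e$ from Lemma \ref{Lmm:DihedLmm2}(b) with $y=x$, using that an endomorphism of $(Q,\circ)$ automatically preserves $/^\circ$.

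It remains to collect the conclusions. As $Q$ is nonempty, any element $\text{Sq}_{/^\circ}(x)$ is an idempotent, and the crux shows $\text{Sq}_{/^\circ}(Q)$ is contained in the set $E$ of idempotents of $(Q,\circ)$. For the reverse containment, if $e\in E$ then $e\circ e=e$ gives $e/^\circ e=e$, so $e=\text{Sq}_{/^\circ}(e)$; hence $\text{Sq}_{/^\circ}(Q)=E$. Finally, each $f\in E$ satisfies $f^S=f$ (Lemma \ref{Lmm:IdempQugp} once more), so $e\rack f=e\circ f^S=e\circ f$ for all $e,f\in E$, and $(e\circ f)^S=e^S\circ f^S=e\circ f$ shows $e\circ f$ is $S$-fixed, hence idempotent; thus $E$ is closed under $\circ$, which on $E$ coincides with $\rack$, as claimed.
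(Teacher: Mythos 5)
Your proof is correct and follows the same overall skeleton as the paper's: show that $e=\text{Sq}_{/^\circ}(x)$ is fixed by $S$, invoke Lemma \ref{Lmm:IdempQugp} to pass from $e\ld e=e$ to $e\circ e=e$, and obtain the reverse containment of the idempotents in $\text{Sq}_{/^\circ}(Q)$ from the same lemma. The only point of divergence is how $e^S=e$ is derived. The paper gets it from Lemma \ref{Lmm:DihedLmm2}(b) (its citation of part (c) there is evidently a typo), writing $x/^\circ x=x^S/^\circ x^S=(x/^\circ x)^S$ via the fact that $S$ is an endomorphism for $/^\circ$; your primary route instead cancels the injective map $R^\circ_x$ against the two equalities $e\circ x=x$ (from (SR)) and $e^S\circ x=x$ (from \eqref{Eq:LBDSIP}), which is marginally more elementary in that it uses only the defining identities of an LBDS-quasigroup rather than the derived identity of Lemma \ref{Lmm:DihedLmm2}(b) --- and the ``alternative'' you sketch at the end of your second paragraph is in fact the paper's own argument. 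Your final paragraph also makes explicit the closure of the idempotent set under $\circ$ and its identification with the derived rack operation $x\rack y=x\circ y^S$, steps the paper leaves implicit; both are correct as written.
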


\begin{proof}
    Let $P$ denote the set of idempotents of $(Q, \circ, \ld, /^\circ)$. If $x\in P$, then $x=x/^\circ x=\text{Sq}_{/^\circ}(x)\in \text{Sq}_{/^\circ}(Q)$ by Lemma \ref{Lmm:IdempQugp}.

    Now, we show $x/^\circ x\in P$ for all $x\in Q$. By Lemma \ref{Lmm:DihedLmm2}(c) $x/^\circ x=x^S/^\circ x^S=(x/^\circ x)^S=(x/^\circ x)\ld(x/^\circ x),$ so $x/^\circ x\in P$ by Lemma \ref{Lmm:IdempQugp}.

\end{proof}

\subsubsection{Bruck loop isotopes and the isomorphism problem for LBDS}\label{SubSubSec:BruckLpIsotopes}

\begin{lemma}\label{Lmm:IdempAuto}
    Let $e\in \emph{Sq}_{/^\circ}(Q)$ be idempotent in the LBDS quasigroup $(Q,\circ, \ld, /^\circ)$. Then $L^\circ_e:x\mapsto e\circ x$ is an automorphism of $(Q, \circ, \ld, /^\circ)$.
\end{lemma}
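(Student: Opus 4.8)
The plan is to verify directly that $L^\circ_e$ respects the multiplication $\circ$, and then upgrade this to an automorphism using bijectivity together with the remark that a map preserving one quasigroup operation preserves all three. The whole argument hinges on the observation that an idempotent element is fixed by $S$, which causes the LF identity to collapse into the homomorphism law.

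First I would record that, by Lemma \ref{Lmm:IdempQugp}, the idempotent $e$ is idempotent with respect to every quasigroup operation; in particular $e^S = e\ld e = e$. (Alternatively, this is immediate from Proposition \ref{Prop:LBDSIdemp}, which identifies $\mathrm{Sq}_{/^\circ}(Q)$ with the set of idempotents and notes $(x/^\circ x)^S = x/^\circ x$.)

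Next, for the homomorphism property, I would take arbitrary $x, y \in Q$ and apply the LF identity \eqref{Eq:LFNew} with the leftmost variable specialized to $e$:
\[
e\circ(x\circ y) = (e\circ x)\circ(e^S\circ y) = (e\circ x)\circ(e\circ y),
\]
the second equality being the substitution $e^S = e$. This says exactly that $L^\circ_e$ is an endomorphism of the magma $(Q,\circ)$. Since $(Q,\circ,\ld,/^\circ)$ is a quasigroup, every left translation is invertible, so $L^\circ_e$ is a bijective endomorphism of $(Q,\circ)$; by the remark following Definition \ref{Def:Qugp}, a bijection preserving $\circ$ automatically preserves $\ld$ and $/^\circ$, so $L^\circ_e$ is an automorphism of $(Q,\circ,\ld,/^\circ)$.

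I do not anticipate a genuine obstacle here: the only point requiring care is making sure we are entitled to conclude automorphy from preservation of $\circ$ alone, which is precisely the content of Remark (c) after Definition \ref{Def:Qugp}, combined with bijectivity of $L^\circ_e$ in a quasigroup. The substantive insight — that idempotence forces $e^S=e$ and hence turns \eqref{Eq:LFNew} into left distributivity at $e$ — is exactly the same mechanism by which left translations by idempotents act as automorphisms in racks and F-quasigroups.
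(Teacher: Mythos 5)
Your proof is correct and is exactly the argument the paper intends: the paper's proof consists of the single sentence that the lemma is ``an immediate consequence of the LF identity,'' and your write-up simply makes explicit the two ingredients behind that remark, namely $e^S=e$ for an idempotent and the resulting collapse of \eqref{Eq:LFNew} at $x=e$ into the homomorphism law, followed by the standard passage from preservation of $\circ$ to preservation of the divisions.
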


\begin{proof}
    This is an immediate consequence of the LF identity \eqref{Eq:LFNew}.
\end{proof}

Recall the correspondence between left symmetric Latin quandles and uniquely 2-divisible Bruck loops established by \eqref{Eq:BruckLoopIsotope} and \eqref{Eq:LSLQIsotope}. This correspondence gives us the following result.

\begin{lemma}\label{Lmm:BruckLoopFromLBDS}
    Let $(Q, \circ, \ld, /^\circ)$ be an LBDS quasigroup with left divisional squaring map $S:x\mapsto x\ld x$. Fix an idempotent element $e\in \emph{Sq}_{/^\circ}(Q)$. Then the operation
    \begin{equation}
        x+y=(x/^\circ e)\circ(e\circ y)
    \end{equation}
   endows $Q$ with the structure of a uniquely 2-divisible Bruck loop $(Q, +, e)$ in which $-x=e\circ x^S$, $2x=x\circ e$, and $S$ is an automorphism. Moreover, the isomorphism class of $(Q, +, e)$ is independent of our choice of idempotent.
\end{lemma}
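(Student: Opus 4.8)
The plan is to reduce everything to the derived rack $(Q,\rack)$ of the LBDS, which by Proposition~\ref{Prop:DerRackProp1} is a left symmetric Latin quandle satisfying $x\rack y=x\circ y^S$, and then apply the correspondence \eqref{Eq:BruckLoopIsotope}--\eqref{Eq:LSLQIsotope} of \cite{StuhlVoj}. Write $/^{\rack}$ for the right division of $(Q,\rack)$. Since $e\in\text{Sq}_{/^\circ}(Q)$ is idempotent, Lemma~\ref{Lmm:IdempQugp} gives $e^S=e$ (and $e/^\circ e=e$), so from $z\rack e=z\circ e^S=z\circ e$ we read off $x/^{\rack}e=x/^\circ e$; combining this with $S$ being an endomorphism of $\circ$ (Lemma~\ref{Lmm:SEndo}) and $S^2=1_Q$ (Lemma~\ref{Lmm:DihedLmm1}(b)), one computes
\[
(x/^{\rack}e)\rack(e\rack y)=(x/^\circ e)\circ(e\circ y^S)^S=(x/^\circ e)\circ(e\circ y)=x+y .
\]
Thus $x+y$ is exactly the operation that \eqref{Eq:BruckLoopIsotope} assigns to the left symmetric Latin quandle $(Q,\rack)$ at its fixed point $e$, so by \cite[Thm.~1]{StuhlVoj} $(Q,+,e)$ is a uniquely $2$-divisible Bruck loop with identity element $e$.

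For the formulas $2x=x\circ e$ and $-x=e\circ x^S$ I would invoke the other half of \cite[Thm.~1]{StuhlVoj}: the two constructions being mutually inverse, $(Q,\rack)$ is recovered from $(Q,+,e)$ via \eqref{Eq:LSLQIsotope}, i.e. $x\circ y^S=x\rack y=2x-y$ for all $x,y$, the operations on the right being those of $(Q,+,e)$. Putting $y=e$ (so $y^S=e$ and $2x-e=2x$) gives $x\circ e=2x$; putting $x=e$ (so $2e=e$, whence $2e-y=-y$) gives $e\circ y^S=-y$, which after renaming is the asserted formula $-x=e\circ x^S$.

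That $S$ is an automorphism of $(Q,+,e)$ is then quick: by Lemma~\ref{Lmm:SEndo} $S$ is an endomorphism of $(Q,\circ,\ld)$, and by Lemma~\ref{Lmm:DihedLmm1}(b) it is an involution, hence a bijection; a bijective homomorphism of a quasigroup preserves all three of its operations, so $S\in\operatorname{Aut}(Q,\circ,\ld,/^\circ)$, and it fixes $e$ because $e^S=e$. Since $+$ is a term in $\circ$, $/^\circ$ and $e$, it follows that $(x+y)^S=x^S+y^S$, so $S$ is an automorphism of $(Q,+,e)$.

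The one substantive point — and the main obstacle — is independence of the chosen idempotent. Let $e,f\in\text{Sq}_{/^\circ}(Q)$ both be idempotent; by the computation of the first paragraph applied once with $e$ and once with $f$, the Bruck loop $(Q,+,e)$ and the Bruck loop $(Q,+',f)$ obtained from $f$ are precisely the Bruck loops that \eqref{Eq:BruckLoopIsotope} attaches to the (base-point-free) quandle $(Q,\rack)$ at $e$ and at $f$. It therefore suffices to exhibit an automorphism $\phi$ of $(Q,\rack)$ with $\phi(e)=f$, for such a $\phi$, preserving $\rack$ and $/^{\rack}$ and sending $e$ to $f$, carries $(Q,+,e)$ isomorphically onto $(Q,+',f)$. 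To build $\phi$ I would use that each left translation $L^{\rack}_x\colon y\mapsto x\rack y$ is an automorphism of $(Q,\rack)$ (left distributivity of the quandle plus bijectivity of $L^{\rack}_x$), and that $L^{\rack}_e\colon y\mapsto 2e-y=-y$ in $(Q,+,e)$; hence $\phi:=L^{\rack}_aL^{\rack}_e$ is an automorphism of $(Q,\rack)$ acting as $y\mapsto 2a-(-y)=2a+y$. Choosing $a$ with $2a=f$, which is possible by unique $2$-divisibility of $(Q,+,e)$, yields $\phi(e)=f+e=f$, as required. The crux is recognizing that the Bruck-loop translations are symmetric-space automorphisms, so that $\operatorname{Aut}(Q,\rack)$ acts transitively on $Q$ (in particular on the idempotents of $\circ$); the rest is bookkeeping with the quandle/Bruck-loop dictionary.
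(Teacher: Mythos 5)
Your proposal is correct, and it diverges from the paper's argument in two of the three non-trivial steps, in ways worth noting. The first step is identical: you and the paper both rewrite $(x/^\circ e)\circ(e\circ y)$ as $(x/^{\rack}e)\rack(e\rack y)$ using $e^S=e$, $S^2=1_Q$, and the endomorphism property of $S$, and then invoke \cite[Thm.~1]{StuhlVoj}. For the formulas $2x=x\circ e$ and $-x=e\circ x^S$, the paper instead computes directly: it asserts $-x=e\circ x^S$ from the quasigroup axioms and derives $2x=x\circ e$ by a chain of applications of the LF identity \eqref{Eq:LFNew}; your route — reading both formulas off the identity $x\rack y=2x-y$ supplied by the mutually-inverse half of the Stuhl--Vojt\v{e}chovsk\'{y} correspondence, specialized at $y=e$ and $x=e$ — is shorter and arguably cleaner, at the cost of leaning harder on the cited theorem. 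The real difference is in the independence of the idempotent: the paper takes $g=f/^\circ e$, checks that $g$ is idempotent so that $L^\circ_g$ is an automorphism of the LBDS-quasigroup (Lemma \ref{Lmm:IdempAuto}), and verifies directly that $L^\circ_g$ carries $+$ to $\boxplus$; you instead use that the left translations of the derived quandle $(Q,\rack)$ are automorphisms (left distributivity), so that $L^{\rack}_aL^{\rack}_e\colon y\mapsto 2a+y$ with $2a=f$ is a quandle automorphism moving $e$ to $f$, and conclude since $+$ is a term in $\rack$, $/^{\rack}$, and the base point. Your version makes the conceptual point explicit — the inner automorphism group of a left symmetric Latin quandle acts transitively, so all base points give isomorphic Bruck loops — and in fact proves independence for arbitrary base points, not only idempotent ones; the paper's version stays entirely inside the LBDS-quasigroup language and produces an explicit isomorphism ($y\mapsto f-y^S$ rather than your $y\mapsto f+y$) that it reuses verbatim in Lemma \ref{Lmm:HomCorrespondence}(b). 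Both are complete proofs.
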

\begin{proof}
   Since $(Q, \rack)$ is a left symmetric Latin quandle, if we can show that $x+y$ is the isotope of \eqref{Eq:BruckLoopIsotope} associated with $(Q, \rack),$ then we are done. Indeed, $x+y=(x/^\circ e)\circ(e\circ y)=(x/^\circ e)\circ(e\circ y^S)^S=(x/^{\rack} e)\circ(e\rack y)^S=(x/^{\rack}e)\rack(e\rack y)$. Obtaining $-x=e\circ x^S$ is immediate from the quasigroup axioms. That $2x=x\circ e$ is due to the LF identity:
   \begin{align*}
       2x&=(x/^\circ e)\circ(e\circ x)\\
       &=((x/^\circ e)\circ e)\circ ((x/^\circ e)^S\circ x)\\
       &=x\circ ((x/^\circ e)\ld x)\\
       &=x\circ e.
   \end{align*}
That $S$ is an automorphism of $(Q, +, e)$ follows from the fact that $S$ commutes with $\circ$ and $/^\circ$, along with the fact that $e^S=e.$

Finally, we show the independence of the choice $e\in \text{Sq}_{/^\circ}(Q)$. Indeed, suppose that $f\in \text{Sq}_{/^\circ}(Q)$ and define $x\boxplus y:=(x/^\circ f)\circ(f\circ y)$. Let $g:=f/^\circ e$ so that $g\circ e=f.$  We claim $L^\circ_g$ is an isomorphism of $(Q, +, e)$ onto $(Q, \boxplus, f)$. Since $f$ and $e$ are idempotent and $S$ is an automorphism of $(Q, \circ, \ld, /^\circ)$, $g^S=g$, $g$ is idempotent, and, by Lemma \ref{Lmm:IdempAuto}, $L^\circ_g$ is an automorphism of $(Q, \circ, \ld, /^\circ)$. Thus, $L^\circ_g(x+y)=((g\circ x)/^\circ(g\circ e))\circ((g\circ y)\circ(g\circ e))=(L^\circ_g(x)/^\circ f)\circ(L^\circ_g(y)\circ f)=L^\circ_g(x)\boxplus L^\circ_g(y)$.
\end{proof}

\begin{proposition}\label{Prop:OddOrderLBDS}
    If $(Q, r)$ is a finite LBDS, then $|Q|$ is odd.
\end{proposition}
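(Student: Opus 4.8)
The plan is to push the finiteness constraint through the Bruck loop isotope constructed in Lemma~\ref{Lmm:BruckLoopFromLBDS}. First I would note that the solution $(Q,r)$ being an LBDS means (via Lemma~\ref{Lmm:DihedLmm1} and Corollary~\ref{Cor:LBDS-quasigroup}) that $(Q,\circ,\ld,/^\circ)$ is an LBDS-quasigroup. Then Proposition~\ref{Prop:LBDSIdemp} guarantees that this quasigroup contains an idempotent; more precisely, $\mathrm{Sq}_{/^\circ}(Q)$ is a nonempty set consisting of idempotents, so we may fix some $e\in\mathrm{Sq}_{/^\circ}(Q)$.

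Next, I would apply Lemma~\ref{Lmm:BruckLoopFromLBDS} with this choice of $e$: it produces a binary operation $+$ on the \emph{same} set $Q$ such that $(Q,+,e)$ is a uniquely $2$-divisible Bruck loop. Since the underlying set is unchanged, $|Q|$ is precisely the order of this Bruck loop. Finally, by \cite[Prop.~1]{Glauberman} (recalled in Section~\ref{SubSec:Loops}), every finite uniquely $2$-divisible Bruck loop has odd order, whence $|Q|$ is odd.

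There is essentially no obstacle here beyond having the preceding machinery assembled; the only point genuinely requiring the theory of Section~\ref{Sec:LBDS} is the existence of the idempotent base point $e$, which is exactly the content of Proposition~\ref{Prop:LBDSIdemp}, itself resting on the fact that $\mathrm{Sq}_{/^\circ}$ lands among idempotents in an LBDS-quasigroup (Lemma~\ref{Lmm:DihedLmm2}(c) together with Lemma~\ref{Lmm:IdempQugp}). Once that is in hand, the argument is a one-line transport of Glauberman's theorem along the isotopy.
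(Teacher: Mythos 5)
Your argument is exactly the paper's proof: pass to the uniquely $2$-divisible Bruck loop isotope of Lemma~\ref{Lmm:BruckLoopFromLBDS} (using Proposition~\ref{Prop:LBDSIdemp} to supply the idempotent base point) and invoke Glauberman's result that finite uniquely $2$-divisible Bruck loops have odd order. Your version simply makes explicit the existence of the idempotent, which the paper leaves implicit.
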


\begin{proof}
    Since $(Q, \circ, \ld, /^\circ)$ is isotopic to a uniquely 2-divisible Bruck loop, $|Q|$ is odd (cf. \cite[Prop.~1]{Glauberman}).
\end{proof}

\begin{lemma}\label{Lmm:LBDSFromBruck}
    Let $(Q, +, e)$ be a uniquely $2$-divisible Bruck loop with involutive automorphism $S$. Then the operations $x\circ y=2x-y^S$, $x\ld y=2x^S-y^S$, $x/^\circ y=y^S+\frac{1}{2}(-y^S+x)$ make $(Q, \circ, \ld, /^\circ)$ an LBDS-quasigroup and in this quasigroup, $x^S=x\ld x$.
\end{lemma}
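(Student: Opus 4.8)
The plan is to realize $\circ$ as a mild twist of the left symmetric Latin quandle attached to $(Q,+,e)$, and then reduce every required identity to a property of that quandle together with one or two standard Bruck-loop laws; this is the converse direction to Lemma~\ref{Lmm:BruckLoopFromLBDS}. By~\eqref{Eq:LSLQIsotope} the operation $x*y:=2x-y$ makes $(Q,*)$ a left symmetric Latin quandle (in particular a left symmetric, left distributive quandle), and since $S$ is an automorphism of $(Q,+)$ it commutes with doubling and with negation, hence is also an automorphism of $(Q,*)$. The key observation is that $x\circ y=2x-y^{S}=x*y^{S}$, so $\circ$ arises from $*$ by precomposing the second argument with the bijection $S$. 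By Corollary~\ref{Cor:LBDS-quasigroup} it then suffices to check: (i) $(Q,\circ,\ld,/^{\circ})$ is a quasigroup with exactly the stated divisions; (ii) $x^{S}=x\ld x$; (iii) $\circ$ satisfies the LF law~\eqref{Eq:LFNew}; and (iv) $\circ$ satisfies~\eqref{Eq:SqinLIP}.

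For (i) and (ii): since $L^{\circ}_{x}(y)=L^{*}_{x}(S(y))$ and both $S$ and $L^{*}_{x}$ (by left symmetry of $*$) are involutions, $L^{\circ}_{x}$ is invertible with inverse $y\mapsto S(L^{*}_{x}(y))=(x*y)^{S}=2x^{S}-y^{S}$, which is exactly the stated $\ld$; hence (IL) and (SL) hold, and $x\ld x=2x^{S}-x^{S}=x^{S}$, the last equality by power-associativity of Bol loops (computing inside the group $\langle x^{S}\rangle$, cf.~\eqref{Eq:PowerAssoc}). For the right division, $R^{\circ}_{y}(x)=x*y^{S}=R^{*}_{y^{S}}(x)$, which is a bijection since $*$ is Latin, so it remains only to show that $x\mapsto y^{S}+\tfrac{1}{2}(-y^{S}+x)$ inverts it; equivalently, that $\bigl(z+\tfrac{1}{2}(-z+x)\bigr)*z=x$ for all $x,z\in Q$. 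Expanding the left-hand side as $2\bigl(z+\tfrac{1}{2}(-z+x)\bigr)-z$ and applying~\eqref{Eq:BruckLaw1} to the double yields $\bigl(z+\bigl((-z+x)+z\bigr)\bigr)-z$; the $c:=-z$ instance of the left Bol law~\eqref{Eq:LBolLaw} (together with $z+(-z)=e$) rewrites this as $z+\bigl((-z)+x\bigr)$, which the left inverse property~\eqref{Eq:LIP} collapses to $x$. This establishes (IR) and (SR) with $/^{\circ}$ as stated.

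It remains to verify (iii) and (iv). Using $x\circ y=x*y^{S}$, the relation $S^{2}=1_{Q}$, and the fact that $S$ is an automorphism of $*$, short computations give $x\circ(y\circ z)=x*(y^{S}*z)$ and $(x\circ y)\circ(x^{S}\circ z)=(x*y^{S})*(x*z)$, so~\eqref{Eq:LFNew} is precisely the left-distributive law~\eqref{Eq:LD} for $*$ (as $y^{S}$ ranges over all of $Q$), which holds; likewise $x^{S}\circ(x\circ y)=x^{S}*(x^{S}*y)$, so, using (ii),~\eqref{Eq:SqinLIP} is the left symmetric law~\eqref{Eq:LS} for $*$. Hence $(Q,\circ,\ld,/^{\circ})$ is an LBDS-quasigroup by Corollary~\ref{Cor:LBDS-quasigroup}. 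I expect the only step that is not routine bookkeeping with the involution $S$ to be the right-division identity in the second paragraph: the point is to recognize that~\eqref{Eq:BruckLaw1} combined with the $c=-a$ specialization of the left Bol law produces exactly the cancellation needed.
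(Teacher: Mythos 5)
Your proof is correct, but it takes a genuinely different route from the paper's. The paper verifies all of (IL)--(SR), \eqref{Eq:SqinLIP}, and \eqref{Eq:LFNew} by direct computation in the Bruck loop, using the LIP, AIP, \eqref{Eq:BruckLaw1}, and \eqref{Eq:LBolLaw}; in particular the LF identity is checked by an explicit six-step chain of Bol-loop manipulations. You instead factor everything through the left symmetric Latin quandle $x*y=2x-y$ of \eqref{Eq:LSLQIsotope}, observe that $\circ$ is the twist $x\circ y=x*y^{S}$, and reduce \eqref{Eq:LFNew} to left distributivity of $*$ and \eqref{Eq:SqinLIP} to left symmetry of $*$, with the left-quasigroup axioms falling out of the involutivity of $L^{*}_{x}$ and $S$. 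This is clean and makes the connection to the derived rack of Proposition \ref{Prop:DerRackProp1} (where $x\rack y=x\circ y^{S}$) transparent; the cost is that the hardest computation is outsourced to the cited fact that \eqref{Eq:LSLQIsotope} yields a quandle (hence is left distributive), whereas the paper's argument is self-contained modulo the standard Bol identities. The one place you cannot avoid a genuine Bruck-loop computation is the right division, and there your verification of $(x/^{\circ}y)\circ y=x$ via \eqref{Eq:BruckLaw1} followed by the left Bol law and LIP is essentially identical to the paper's proof of (SR); your derivation of (IR) from (SR) plus bijectivity of $R^{\circ}_{y}$ is a small simplification over the paper, which computes (IR) separately. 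All individual steps check out, including the inverse $(L^{*}_{x}\circ S)^{-1}=S\circ L^{*}_{x}$ giving exactly the stated $\ld$, and the identification of $x\ld x$ with $x^{S}$ via power-associativity.
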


\begin{proof}
    We need to check that $(Q, \circ, \ld,/^\circ)$ is a quasigroup satisfying identities \eqref{Eq:LFNew} and \eqref{Eq:SqinLIP}.  Identities (IL), (SL), and \eqref{Eq:SqinLIP} follow from the LIP and AIP. The fact that $x^S=x\ld x$ is due to power-associativity of Bruck loops. Furthermore, \eqref{Eq:LFNew} holds because
\begin{align*}
(x\circ y)\circ (x^S\circ z)&=2(2x-y^S)-(2x^S-z^S)^S
=2(2x-y^S)-(2x-z)\\
&=(2x+(-2y^S+2x))-(2x-z)
=2x+(-2y^S+(2x+(-2x+z)))\\
&=2x+(-2y^S+z)
=2x-(2y-z^S)^S
=x\circ (y\circ z).
\end{align*}
We used \eqref{Eq:BruckLaw1}, \eqref{Eq:LBolLaw}, LIP and AIP in that order. The right division identity (IR) follows from \ref{Eq:BruckLaw1} and the LIP: $(x\circ y)/^\circ y=(2x-y^S)/^\circ y=y^S+\frac{1}{2}(-y^S+(2x-y^S))=y^S+\frac{1}{2}(2(-y^S+x))=x.$ To prove (SR), we first note that by \eqref{Eq:BruckLaw1}, $$2(y^S+\frac{1}{2}(-y^S+x))=y^S+((-y^S+x)+y^S).$$ Therefore,
\begin{align*}
    (x/^\circ y)\circ y&=2(y^S+\frac{1}{2}(-y^S+x))-y^S
    =(y^S+((-y^S+x)+y^S))-y^S\\
    &=y^S+((-y^S+x)+(y^S-y^S))
    =x.
\end{align*}
The penultimate equality is by \eqref{Eq:LBolLaw}, while the final one comes from LIP.
\end{proof}

\noindent Lemmas \ref{Lmm:BruckLoopFromLBDS} and \ref{Lmm:LBDSFromBruck} lead directly to the following.

\begin{theorem}\label{Thm:LBDSCharacterization}
    The pair $(Q, r)$ with $r(x, y)=(x\circ y, x\bullet y)$ constitutes an LBDS if and only if there is a uniquely $2$-divisible Bruck loop $(Q, +, e)$ and an involution $S\in \emph{Aut}(Q, +, e)$ such that $r(x, y)=(2x-y^S, x^S)$.
\end{theorem}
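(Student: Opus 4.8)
The plan is to obtain the theorem by assembling Lemmas \ref{Lmm:LBDSFromBruck} and \ref{Lmm:BruckLoopFromLBDS}, using the correspondence of Corollary \ref{Cor:LBDS-quasigroup} (equivalently Theorem \ref{Thm:LBDSClass}) to pass freely between an LBDS $(Q,r)$ with $r(x,y)=(x\circ y, x\bullet y)$ and its underlying LBDS-quasigroup $(Q,\circ,\ld,/^\circ)$, and using Lemma \ref{Lmm:DihedLmm1}(a) to identify the second component with $x^S=x\ld x$. The guiding point is that ``$S$'' consistently means the left-divisional squaring map, so the notational coincidence asserted at the end of Lemma \ref{Lmm:LBDSFromBruck} (that $x^S=x\ld x$ in the quasigroup built from a Bruck loop) is exactly what makes the two appearances of $S$ in the statement refer to the same map.

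For the ``if'' direction, suppose $(Q,+,e)$ is a uniquely $2$-divisible Bruck loop and $S\in\operatorname{Aut}(Q,+,e)$ is an involution. Lemma \ref{Lmm:LBDSFromBruck} supplies explicit divisions making $(Q,\circ,\ld,/^\circ)$ with $x\circ y=2x-y^S$ an LBDS-quasigroup in which $x\ld x=x^S$. By Corollary \ref{Cor:LBDS-quasigroup} this quasigroup is the $\circ$-part of an LBDS $(Q,r)$, and by Lemma \ref{Lmm:DihedLmm1}(a) the $\bullet$-part of that solution is $x\bullet y=x\ld x=x^S$. Hence $r(x,y)=(2x-y^S,x^S)$ is an LBDS.

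For the ``only if'' direction, let $(Q,r)$ be an LBDS. By Lemma \ref{Lmm:DihedLmm1} we have $x\bullet y=x^S$ and $S^2=1_Q$, and $(Q,\circ,\ld,/^\circ)$ is an LBDS-quasigroup (Corollary \ref{Cor:LBDS-quasigroup}). Proposition \ref{Prop:LBDSIdemp} provides an idempotent $e\in\text{Sq}_{/^\circ}(Q)$, so Lemma \ref{Lmm:BruckLoopFromLBDS} makes $(Q,+,e)$, with $x+y=(x/^\circ e)\circ(e\circ y)$, a uniquely $2$-divisible Bruck loop for which $2x=x\circ e$, $-w=e\circ w^S$, and $S\in\operatorname{Aut}(Q,+,e)$; and $S$ is an involution by Lemma \ref{Lmm:DihedLmm1}(b). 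It remains only to re-express $r$ in additive form. Using $2x=x\circ e$, then $-y^S=e\circ y^{S^2}=e\circ y$ (by $S^2=1_Q$), then the definition of $+$, then the quasigroup axiom (IR) together with $e^S=e$ (Lemma \ref{Lmm:IdempQugp}) and $e\circ(e\circ y)=e^S\circ(e\circ y)=y$ (Lemma \ref{Lmm:DihedLmm2}(a)), one computes
\[
2x-y^S=(x\circ e)+(e\circ y)=\big((x\circ e)/^\circ e\big)\circ\big(e\circ(e\circ y)\big)=x\circ y,
\]
so $r(x,y)=(x\circ y,x\bullet y)=(2x-y^S,x^S)$, as required.

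The substance of the argument is carried by Lemmas \ref{Lmm:LBDSFromBruck} and \ref{Lmm:BruckLoopFromLBDS}, so no genuine obstacle remains at this stage; the only things to watch are the bookkeeping around $S$ — confirming that the automorphism $S$ one starts from in the ``if'' direction really is the left-divisional squaring map of the resulting $\circ$ (handled by Lemma \ref{Lmm:LBDSFromBruck}) — and carrying out the short displayed computation that pins down the additive form of $r$ in the ``only if'' direction.
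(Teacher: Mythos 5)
Your proposal is correct and takes essentially the same route as the paper, which derives the theorem directly from Lemmas \ref{Lmm:BruckLoopFromLBDS} and \ref{Lmm:LBDSFromBruck} together with the correspondence of Theorem \ref{Thm:LBDSClass}. The only addition is your explicit computation that $2x-y^S=x\circ y$ in the ``only if'' direction, which the paper records separately (and identically) as part of Lemma \ref{Lmm:BruckandBack}.
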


\begin{corollary}\label{Cor:ExponentBruckLoop}
    Let $(Q, r)$ be an LBDS such that, as a permutation of $Q^2$, $|r|=n$. If $(Q, +, e)$ is the associated Bruck loop, then its exponent divides $n$. In particular, if $n$ is prime, then $(Q, +, e)$ is a Bruck loop of exponent $n.$
\end{corollary}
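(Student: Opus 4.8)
The plan is to invoke Theorem~\ref{Thm:LBDSCharacterization} to replace $r$ by the explicit formula $r(x,y) = (2x - y^S, x^S)$ attached to the associated uniquely $2$-divisible Bruck loop $(Q, +, e)$ and its involutory automorphism $S$, and then to compute the iterates of $r$ on the slice $\{(x, e) \mid x \in Q\}$. The point is that this slice evolves in a completely grouplike fashion, so that the finite order of $r$ forces a divisibility condition on the orders of elements of $(Q, +)$.

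Concretely, I would first prove by induction on $k \ge 0$ that
\[
r^{k}(x, e) \;=\; \bigl((k+1)x,\; k\,x^S\bigr) \qquad \text{for every } x \in Q,
\]
where $jz$ denotes the $j$-fold sum of $z$ with itself, which is unambiguous because Bruck loops, being Bol loops, are power-associative. The base case $k = 0$ is just $r^0(x,e) = (x,e)$. For the inductive step, applying $r$ to $\bigl((k+1)x,\, k\,x^S\bigr)$ produces first coordinate $2\bigl((k+1)x\bigr) - \bigl(k\,x^S\bigr)^S$ and second coordinate $\bigl((k+1)x\bigr)^S$. Since $S$ is an automorphism it commutes with the power maps, and since $S^2 = 1_Q$ we get $\bigl(k\,x^S\bigr)^S = kx$ and $\bigl((k+1)x\bigr)^S = (k+1)x^S$; as every term in the first coordinate lies in the cyclic group $\langle x\rangle$, ordinary integer arithmetic applies and $2\bigl((k+1)x\bigr) - kx = (k+2)x$. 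This is exactly the claimed formula with $k$ replaced by $k+1$.

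With the formula in hand the conclusion is immediate: $|r| = n$ means $r^n = 1_{Q^2}$, so $\bigl((n+1)x,\, n\,x^S\bigr) = (x, e)$ for all $x \in Q$; comparing first coordinates gives $(n+1)x = x$, hence $nx = e$, in the cyclic group $\langle x\rangle$. Thus every element of $(Q, +, e)$ has order dividing $n$, i.e. the exponent of $(Q, +, e)$ divides $n$. For the final assertion, if $n$ is prime then $Q$ must be nontrivial (a one-element loop would give $|r| = 1 < n$), so the exponent of $(Q, +, e)$ is a divisor of $n$ exceeding $1$, and therefore equals $n$.

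I do not anticipate any genuine obstacle; the only care needed is the bookkeeping of integer multiples inside the possibly non-commutative Bruck loop. This is harmless here because, at each stage of the induction, every element entering the computation of $r^{k+1}(x,e)$ sits inside a single cyclic subgroup --- $\langle x\rangle$ for the first coordinate and $\langle x^S\rangle$ for the second --- where power-associativity legitimizes all the manipulations.
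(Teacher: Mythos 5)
Your proposal is correct and follows essentially the same route as the paper: an induction, justified by power-associativity of Bruck loops and the fact that $S$ is an involutory automorphism, computing the iterates of $r(x,y)=(2x-y^S,x^S)$ on a one-parameter slice of $Q^2$ (you use $(x,e)$ where the paper uses $(e,x)$, which is immaterial) and then reading off $nx=e$ from $r^n=1_{Q^2}$.
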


\begin{proof}
    An easy induction proof, bearing in mind that Bruck loops are power-associative, shows that whenever $x\in Q$, we have $(e, x)=r^n(e, x)=(-nx^S, (-n+1)x)$.
\end{proof}

\begin{lemma}\label{Lmm:BruckandBack}
    The constructions of Lemmas \ref{Lmm:BruckLoopFromLBDS} and \ref{Lmm:LBDSFromBruck} are mutually inverse. That is, if $(Q, +, e)$ is a uniquely 2-divisible Bruck loop with involutive automorphism $S$ and $x\circ y:=2x-y^S,$ then $x+y=(x/^\circ e)\circ(e\circ y)$. Conversely, if $(Q, \circ, \ld, /^\circ)$ is an LBDS-quasigroup with idempotent $e$, $x^S:=x\ld x$, and $x+y:=(x/^\circ e)\circ(e\circ y)$ is the associated Bruck loop operation, then $x\circ y=2x-y^S$.
\end{lemma}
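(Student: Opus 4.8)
The plan is to verify the two asserted round-trips by direct substitution, using the explicit formulas recorded in Lemmas \ref{Lmm:BruckLoopFromLBDS} and \ref{Lmm:LBDSFromBruck}, together with the facts that a loop automorphism fixes the identity and commutes with inversion and that $S^2 = 1_Q$.

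For the first direction I would start from a uniquely $2$-divisible Bruck loop $(Q, +, e)$ with involutive automorphism $S$, put $x \circ y := 2x - y^S$, and recall from Lemma \ref{Lmm:LBDSFromBruck} that $(Q, \circ, \ld, /^\circ)$ is then an LBDS-quasigroup whose right division is $x /^\circ y = y^S + \frac{1}{2}(-y^S + x)$. Before applying the construction of Lemma \ref{Lmm:BruckLoopFromLBDS} I would check that $e \circ e = 2e - e^S = e$, so that $e$ is an idempotent of $(Q, \circ)$ and hence $e = e /^\circ e \in \text{Sq}_{/^\circ}(Q)$ by Lemma \ref{Lmm:IdempQugp}; the operation $(x /^\circ e) \circ (e \circ y)$ is therefore legitimately based at this $e$. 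A short computation then yields $x /^\circ e = e^S + \frac{1}{2}(-e^S + x) = \frac{1}{2}x$ and $e \circ y = 2e - y^S = -y^S$, whence $(x /^\circ e) \circ (e \circ y) = (\frac{1}{2}x) \circ (-y^S) = x - (-y^S)^S = x - (-y) = x + y$, the last step using $(-y^S)^S = -y^{S^2} = -y$. This recovers the original $+$.

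For the converse I would take an LBDS-quasigroup $(Q, \circ, \ld, /^\circ)$ with idempotent $e$, write $x^S := x \ld x$, and let $x + y := (x /^\circ e) \circ (e \circ y)$ be the Bruck loop operation supplied by Lemma \ref{Lmm:BruckLoopFromLBDS}; from that lemma I already have $-x = e \circ x^S$, $2x = x \circ e$, and from Lemma \ref{Lmm:IdempQugp} that $e^S = e$. Reading $2x - y^S$ as $(2x) + (-(y^S))$ in this loop, I would first note $-(y^S) = e \circ (y^S)^S = e \circ y$ (since $S^2 = 1_Q$), and then expand $(2x) + (-(y^S)) = ((x \circ e) /^\circ e) \circ (e \circ (e \circ y))$. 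The outer right division cancels by the quasigroup axiom (IR), leaving $x$ in the first component, while $e \circ (e \circ y) = y$ by \eqref{Eq:LBDSIP} (equivalently Lemma \ref{Lmm:DihedLmm2}(a)) because $e^S = e$; hence $2x - y^S = x \circ y$, as desired.

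Both directions are purely computational, so I do not expect a genuine obstacle. The only points requiring attention are checking at the outset that the fixed point $e$ of $S$ is an idempotent of $\circ$ (so that the isotope construction of Lemma \ref{Lmm:BruckLoopFromLBDS} may be based at it) and keeping careful track of the interplay among $S$, inversion, and the halving map $\frac{1}{2}(\cdot)$, which is entirely controlled by $S$ being a loop automorphism with $S^2 = 1_Q$.
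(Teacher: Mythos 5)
Your proposal is correct and follows essentially the same route as the paper's proof: in the forward direction you compute $x/^\circ e=\tfrac{1}{2}x$ and $e\circ y=-y^S$ and substitute, and in the converse you rewrite $2x-y^S$ as $(x\circ e)+(e\circ y)$ and cancel using (IR) and \eqref{Eq:LBDSIP}, exactly as the paper does. The extra checks you include (that $e$ is a $\circ$-idempotent, and the bookkeeping with $S^2=1_Q$) are correct and only make the argument more explicit.
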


\begin{proof}
    Suppose $(Q, +, e)$ is a uniquely $2$-divisible Bruck loop with involutive automorphism $S$, so that $x\circ y:=2x-y^S$ and $x/^\circ y=\frac{1}{2}(y^S+((-y^S+x)+y^S))$. Then $x/^\circ e=\frac{1}{2}x$ since $-e^S=e^S=e.$ Similarly, $e\circ y=-y^S.$ Thus, $(x/^\circ e)\circ (e\circ y)=(\frac{1}{2}x)\circ(-y^S)=2(\frac{1}{2}x)-(-y^S)^S=x+y$.

    Now, begin by assuming that $(Q, \circ, \ld, /^\circ)$ is an LBDS-quasigroup with idempotent $e$. Note $-y^S=(e\circ y^S)^S=e\circ y$. Thus,
    \begin{align*}
        2x-y^S&=(x\circ e)+(e\circ y)\\
        &=((x\circ e)/^\circ e)\circ (e\circ (e\circ y))\\
        &=x\circ (e^S\circ(e\circ y))\\
        &=x\circ y.
    \end{align*}
\end{proof}

\begin{remark}\label{Rmk:LBDSLang}
  Theorem \ref{Thm:LBDSCharacterization} and Lemma \ref{Lmm:BruckandBack} allow us to describe LBDS entirely in the language of uniquely $2$-divisible Bruck loops. When we say $(Q, +, e, S)$ is an LBDS, it will be understood that $(Q, +, e)$ is a uniquely $2$-divisible Bruck loop and $S\in \text{Aut}(Q, +, e)$ has order dividing $2$. Moreover, when we say $(Q, \circ, \ld, /^\circ)$ is the associated LBDS-quasigroup, it will be understood that $x\circ y=2x-y^S$, $x\ld y=2x^S-y^S$, $x/^\circ y=y^S+\frac{1}{2}(-y^S+x)$ and that $x+y=(x/^\circ e)\circ(e\circ y)$.
\end{remark}

\begin{lemma}\label{Lmm:HomCorrespondence}
    Let $(Q, +, e, S)$ and $(P, \boxplus, f, T)$ be LBDS with corresponding quasigroups $(Q, \circ, \ld ,/^\circ)$ and $(P, *, \backslash_*, /^*)$.
    \begin{itemize}
        \item[(a)] If $\varphi:Q\to P$ is a Bruck loop homomorphism such that $\varphi S=T\varphi$, it is an LBDS-quasigroup homomorphism.
        \item[(b)] If $\psi:Q\to P$ is an LBDS-quasigroup homomorphism and $g=f/^*\psi(e)$, then $\varphi:=L^*_g\psi$ is a Bruck loop homomorphism with $\varphi S=T\varphi$.
        \end{itemize}
\end{lemma}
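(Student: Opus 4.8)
The plan is to establish (a) by a direct substitution into the formula $x\circ y=2x-y^S$ recorded in Remark \ref{Rmk:LBDSLang}, and (b) by realizing $\varphi$ as the composite of a quasigroup homomorphism with a quasigroup \emph{automorphism}, so that it automatically transports the Bruck-loop structure and intertwines $S$ with $T$.

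For part (a): a Bruck-loop homomorphism $\varphi\colon(Q,+,e)\to(P,\boxplus,f)$ preserves the doubling map $x\mapsto 2x=x+x$ and, since Bruck loops have unique two-sided inverses and a loop homomorphism carries a two-sided inverse to a two-sided inverse, it preserves inversion. Feeding in the hypothesis $\varphi(x^S)=\varphi(x)^T$ one gets
\[
\varphi(x\circ y)=\varphi(2x-y^S)=2\varphi(x)-\varphi(y)^T=\varphi(x)*\varphi(y),
\]
so $\varphi$ preserves $\circ$; since a map between quasigroups that preserves one of the three operations preserves all three, $\varphi$ is an LBDS-quasigroup homomorphism.

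For part (b) I would argue in four steps. (i) The element $\psi(e)$ is idempotent in $(P,*,\backslash_*,/^*)$: since $e\circ e=e$ and $\psi$ preserves $\circ$, $\psi(e)*\psi(e)=\psi(e)$, hence $\psi(e)\in\text{Sq}_{/^*}(P)$ by Lemma \ref{Lmm:IdempQugp} and Proposition \ref{Prop:LBDSIdemp}; the element $f$ is idempotent by construction (Lemma \ref{Lmm:BruckLoopFromLBDS}). (ii) The element $g=f/^*\psi(e)$ is idempotent: the map $T=\text{Sq}_{\backslash_*}$ is an endomorphism of $(P,*,\backslash_*,/^*)$ (Lemma \ref{Lmm:SEndo}) and idempotents of $P$ are $T$-fixed (as in the proof of Proposition \ref{Prop:LBDSIdemp}), so $g\backslash_*g=g^T=f^T/^*\psi(e)^T=f/^*\psi(e)=g$, and Lemma \ref{Lmm:IdempQugp} finishes; this is exactly the mechanism behind the ``independence of the chosen idempotent'' part of the proof of Lemma \ref{Lmm:BruckLoopFromLBDS}. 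Consequently $g\in\text{Sq}_{/^*}(P)$ by Proposition \ref{Prop:LBDSIdemp}, so $L^*_g$ is an automorphism of $(P,*,\backslash_*,/^*)$ by Lemma \ref{Lmm:IdempAuto}. (iii) Hence $\varphi=L^*_g\psi$ is a quasigroup homomorphism $(Q,\circ,\ld,/^\circ)\to(P,*,\backslash_*,/^*)$, and $\varphi(e)=g*\psi(e)=(f/^*\psi(e))*\psi(e)=f$ by axiom (SR). (iv) Using $x+y=(x/^\circ e)\circ(e\circ y)$, $x\boxplus y=(x/^*f)*(f*y)$, the preservation of $\circ$ and $/^\circ$, and $\varphi(e)=f$, one computes $\varphi(x+y)=(\varphi(x)/^*f)*(f*\varphi(y))=\varphi(x)\boxplus\varphi(y)$, so $\varphi$ is a Bruck-loop homomorphism; and $\varphi(x^S)=\varphi(x\ld x)=\varphi(x)\backslash_*\varphi(x)=\varphi(x)^T$, i.e.\ $\varphi S=T\varphi$.

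The only step with any real content is (ii) --- showing that $L^*_g$ is an automorphism of the quasigroup rather than merely a bijection --- and since the needed computation already appears inside the proof of Lemma \ref{Lmm:BruckLoopFromLBDS}, I do not expect a genuine obstacle; everything else is translation through the dictionary of Remark \ref{Rmk:LBDSLang} together with the observation that both $+$ and $S$ are definable from $\circ$, $/^\circ$, and the idempotent.
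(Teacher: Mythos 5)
Your proposal is correct and follows essentially the same route as the paper: part (a) is the same one-line substitution into $x\circ y=2x-y^S$, and part (b) likewise hinges on showing $g=f/^*\psi(e)$ is an idempotent fixed by $T$ so that $L^*_g$ is a quasigroup automorphism (Lemma \ref{Lmm:IdempAuto}), after which the Bruck-loop operation and the intertwining $\varphi S=T\varphi$ are transported through the dictionary $x+y=(x/^\circ e)\circ(e\circ y)$, exactly as in the paper's deferral to the end of the proof of Lemma \ref{Lmm:BruckLoopFromLBDS}. The only cosmetic difference is that you derive $\varphi S=T\varphi$ from preservation of $\ld$ rather than by the paper's direct computation with $g^T=g$; both are valid.
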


\begin{proof}
    (a) For all $x, y\in Q$, $\varphi(x\circ y)=\varphi(2x-y^S)=\varphi(2x)\boxminus \varphi(y^S)=2\varphi(x)\boxminus \varphi(y)^T=\varphi(x)*\varphi(y).$\\

    (b) By assumption, $\psi(e), f\in P$ are idempotent, and since $T$ is an automorphism of $(P, *, \backslash_*, /^*)$, we can conclude $g=f/^*\psi(e)$ is idempotent. Thus, for all $x\in Q,$ $\varphi(x^S)=g*\psi(x^S)=g*\psi(x\ld x)=g*(\psi(x)\backslash_* \psi(x))=g*(\psi(x))^T=(g*\psi(x))^T=\varphi(x)^T$. That is, $\varphi S=T\varphi.$ To see that $\varphi$ is a loop homomorphism, consult the last two sentences of the proof of Lemma \ref{Lmm:BruckLoopFromLBDS}; the argument is nearly identical.
\end{proof}

\noindent The following theorem is an immediate consequence of Lemma \ref{Lmm:HomCorrespondence}

\begin{theorem}\label{Thm:LBDSIsom}
    Let $(Q, +, e, S)$ and $(P, \boxplus, f, T)$ be LBDS. The associated LBDS quasigroups $x\circ y:=2x-y^S$ and $x*y=2x\boxminus y^T$ are isomorphic if and only if there is a Bruck loop isomorphism $\varphi:Q\to P$ such that $\varphi S=T\varphi.$
\end{theorem}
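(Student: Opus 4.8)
The plan is to deduce the theorem directly from Lemma~\ref{Lmm:HomCorrespondence}, the only extra ingredient being the standard fact that a bijective homomorphism of quasigroups --- or of loops --- is automatically an isomorphism: its set-theoretic inverse again preserves the operations, since for quasigroups preservation of one operation forces preservation of all (cf. the remark following Definition~\ref{Def:Qugp}). This is what turns the ``homomorphism'' statements of Lemma~\ref{Lmm:HomCorrespondence} into the ``isomorphism'' statements we want. For the forward implication, suppose $\varphi\colon Q\to P$ is a Bruck loop isomorphism with $\varphi S=T\varphi$. Lemma~\ref{Lmm:HomCorrespondence}(a) says $\varphi$ is a homomorphism of the associated LBDS-quasigroups $(Q,\circ,\ld,/^\circ)\to(P,*,\backslash_*,/^*)$; being bijective, it is an isomorphism of these quasigroups, so the two quasigroup multiplications $x\circ y=2x-y^S$ and $x*y=2x\boxminus y^T$ are isomorphic.

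Conversely, let $\psi\colon Q\to P$ be an isomorphism of the associated LBDS-quasigroups. The element $e$ is an idempotent of $(Q,\circ,\ld,/^\circ)$ --- this is built into the LBDS notation $(Q,+,e,S)$; cf. Remark~\ref{Rmk:LBDSLang} and Proposition~\ref{Prop:LBDSIdemp} --- so $\psi(e)$ is an idempotent of $(P,*,\backslash_*,/^*)$, by Lemma~\ref{Lmm:IdempQugp} together with the fact that a homomorphism preserves idempotency with respect to $*$. Put $g:=f/^*\psi(e)$ and $\varphi:=L^*_g\psi$. Lemma~\ref{Lmm:HomCorrespondence}(b) then gives that $\varphi$ is a Bruck loop homomorphism with $\varphi S=T\varphi$. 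Moreover $L^*_g$ is a left translation in the quasigroup $(P,*)$, hence a bijection, so $\varphi$ is a composite of the bijections $\psi$ and $L^*_g$ and is therefore a bijection $Q\to P$. Thus $\varphi$ is the required Bruck loop isomorphism satisfying $\varphi S=T\varphi$, completing the equivalence.

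I do not expect any genuine obstacle: all of the substantive content is already carried by Lemma~\ref{Lmm:HomCorrespondence} (and, behind it, by Lemmas~\ref{Lmm:BruckLoopFromLBDS} and~\ref{Lmm:LBDSFromBruck}), and what remains is the routine bookkeeping that the maps produced stay bijective. The one point that deserves to be stated explicitly is that $g=f/^*\psi(e)$ is well-formed precisely because $\psi(e)$ is idempotent --- equivalently, because the base point $e$ was chosen inside $\text{Sq}_{/^{\circ}}(Q)$ --- and this compatibility of base points under $\psi$ is the only thing one has to be careful about.
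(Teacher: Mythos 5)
Your proposal is correct and follows exactly the route the paper intends: the paper states the theorem as an ``immediate consequence'' of Lemma~\ref{Lmm:HomCorrespondence}, and your write-up simply supplies the omitted bookkeeping (bijective homomorphisms of quasigroups and loops are isomorphisms; $L^*_g$ is a bijection, so $\varphi=L^*_g\psi$ remains bijective). Nothing is missing and nothing differs in substance from the paper's argument.
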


\subsubsection{Squaring the right division, substructures and extension}\label{SubSubSec:SquareLBDS}
Suppose that $(Q, +, e, S)$ is an LBDS with associated LBDS-quasigroup $(Q, \circ, \ld, /^\circ)$. Recall from Proposition \ref{Prop:LBDSIdemp} that the idempotents of $(Q, \circ, \ld, /^\circ)$ coincide with $\text{Sq}_{/^\circ}(Q)$. By Lemma \ref{Lmm:IdempQugp}, $x\in \text{Sq}_{/^\circ}(Q)$ if and only if $x^S=x$. What's more, if $x, y\in \text{Sq}_{/^\circ}(Q)$, then $(x\circ y)^S=x^S\circ y^S=x\circ y$, so $x\circ y\in \text{Sq}_{/^\circ}(Q)$. That is, $\text{Sq}_{/^\circ}(Q)$ is a subquasigroup of $Q$ and in this set, the LBDS-operation takes the form $$x\circ y=2x-y.$$
We can now also plainly see that $\text{Sq}_{/^\circ}(Q)$ is also a subloop of $(Q, +, e)$, as $(x+y)^S=x^S+y^S=x+y$ for all $x, y\in Q$.
\noindent Next, define

\begin{equation}\label{Eq:Qe}
    Q_e=\{x\in Q\mid \text{Sq}_{/^\circ}(x)=e\}
\end{equation}
to be the pre-image of the identity with respect to $\text{Sq}_{/^\circ}$. Combining the LIP and the AIP, we can see that $\text{Sq}_{/^\circ}(x)=x^S+\frac{1}{2}(-x^S+x)=e$ if and only if $x=-x^S$. Therefore, whenever $x, y\in Q_e$, $(x\circ y)^S=x^S\circ y^S=(-x)\circ(-y)=(e\circ x)\circ (e\circ y)=e\circ(x\circ y)=-(x\circ y)$, and so $x\circ y\in Q_e$. In other words, $Q_e$ is a subquasigroup of $Q$ in which $$x\circ y=2x+y.$$ Moreover, $Q_e$ is a subloop, as $(x+y)^S=x^S+y^S=-x-y=-(x+y)$ for all $x, y\in Q_e$.

\noindent We summarize this discussion via the following proposition.

\begin{proposition}\label{Prop:IdempSubqugp}
    Let $(Q, +, e, S)$ be an LBDS with associated quasigroup $(Q, \circ, \ld, /^\circ)$.
    \begin{itemize}
        \item[(a)] The set of idempotents of $(Q, \circ, \ld, /^\circ)$, $\emph{Sq}_{/^\circ}(Q)$, forms a subquasigroup and a subloop of $(Q, +, e)$ that coincides with the set $$\{x\in Q\mid x^S=x\}$$ of fixed points of $S$.
        \item[(b)] The set $Q_e$ of \eqref{Eq:Qe} forms a subquasigroup of $(Q, \circ, \ld, /^\circ)$ and a subloop of $(Q, +, e)$ that coincides with $$\{x\in Q\mid x^S=-x\}.$$
    \end{itemize}
\end{proposition}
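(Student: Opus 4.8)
The plan is to organise the computations carried out in the paragraphs immediately preceding the statement; the proposition is essentially their summary. Throughout I would lean on Lemma~\ref{Lmm:LBDSFromBruck} and Remark~\ref{Rmk:LBDSLang}: in the associated quasigroup $x\circ y=2x-y^S$, $x\ld y=2x^S-y^S$, $x/^\circ y=y^S+\frac{1}{2}(-y^S+x)$, one has $x\ld x=x^S$, and $S$ is an automorphism both of $(Q,+,e)$ and of $(Q,\circ,\ld,/^\circ)$ --- the latter because $S$ is an endomorphism of $(Q,\circ,\ld)$ by Lemma~\ref{Lmm:SEndo}, it is bijective by Lemma~\ref{Lmm:DihedLmm1}(b), and an endomorphism preserving $\circ$ preserves $/^\circ$.

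\emph{Part (a).} The first step is to identify $\text{Sq}_{/^{\circ}}(Q)$ with $\mathrm{Fix}(S):=\{x\in Q\mid x^S=x\}$. By Proposition~\ref{Prop:LBDSIdemp}, $\text{Sq}_{/^{\circ}}(Q)$ is the set of idempotents of $(Q,\circ,\ld,/^\circ)$; by Lemma~\ref{Lmm:IdempQugp} an element is $\circ$-idempotent exactly when it is $\ld$-idempotent, and $x\ld x=x$ is literally the condition $x^S=x$. Hence $\text{Sq}_{/^{\circ}}(Q)=\mathrm{Fix}(S)$. The second step is immediate: the fixed-point set of an automorphism is a subalgebra, so $\mathrm{Fix}(S)$ is simultaneously a subquasigroup of $(Q,\circ,\ld,/^\circ)$ and a subloop of $(Q,+,e)$. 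Restricting $x\circ y=2x-y^S$ to $\mathrm{Fix}(S)$ gives $x\circ y=2x-y$, completing (a).

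\emph{Part (b).} First I would compute $\text{Sq}_{/^{\circ}}(x)=x/^\circ x=x^S+\frac{1}{2}(-x^S+x)$ and then solve $\text{Sq}_{/^{\circ}}(x)=e$. Since $(Q,+,e)$ has two-sided inverses, $a+b=e$ forces $b=-a$; taking $a=x^S$ yields $\frac{1}{2}(-x^S+x)=-x^S$, so doubling both sides gives $-x^S+x=(-x^S)+(-x^S)$, whence $x=-x^S$ because $L^{+}_{-x^S}$ is injective. As inverses are two-sided, $x=-x^S$ is equivalent to $x^S=-x$, so $Q_e=\{x\in Q\mid x^S=-x\}$. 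For closure I would observe that the inversion map $J\colon x\mapsto -x$ is an automorphism of $(Q,+,e)$ --- this is exactly the AIP --- and also of $(Q,\circ,\ld,/^\circ)$: indeed $J(x\circ y)=-(2x-y^S)=2(-x)+y^S=2(-x)-(-y)^S=J(x)\circ J(y)$, and $\ld$, $/^\circ$ are handled the same way. Then $K:=J\circ S$ is an automorphism of both structures with $K^2=1_Q$, and $Q_e=\mathrm{Fix}(K)$, hence $Q_e$ is a subquasigroup and a subloop; restricting $x\circ y=2x-y^S$ to $Q_e$ gives $x\circ y=2x+y$. (Alternatively one verifies directly with the AIP that $\{x\mid x^S=-x\}$ is closed under $+$ and its two divisions and under $\circ$, $\ld$, $/^\circ$ --- this is the route taken in the preceding paragraphs.)

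The only step requiring genuine care is solving $\text{Sq}_{/^{\circ}}(x)=e$ in part (b): one must track the non-associativity and invoke the LIP, the AIP, and if convenient \eqref{Eq:BruckLaw1} at the right moments. Everything else is bookkeeping --- once $\text{Sq}_{/^{\circ}}(Q)$ and $Q_e$ are recognised as fixed-point sets of involutory automorphisms of the relevant algebras, closure comes for free --- so I anticipate no conceptual obstacle.
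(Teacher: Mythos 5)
Your proof is correct and follows essentially the same route as the paper: the identifications of $\text{Sq}_{/^\circ}(Q)$ and $Q_e$ with $\{x\mid x^S=x\}$ and $\{x\mid x^S=-x\}$ are obtained by the same computations (Proposition \ref{Prop:LBDSIdemp} with Lemma \ref{Lmm:IdempQugp} for the first, solving $x^S+\frac{1}{2}(-x^S+x)=e$ via LIP and AIP for the second), and closure rests in both cases on $S$ being an endomorphism. Your one variation---packaging the closure of $Q_e$ as the fixed-point set of the involutory automorphism $x\mapsto -x^S$---is a tidy shortcut that replaces the paper's direct verification of $(x\circ y)^S=-(x\circ y)$ and $(x+y)^S=-(x+y)$, but it is not a substantively different argument.
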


\begin{remark}\label{Rmk:SqMapEndomorphism}
Now we have shown that the image and, kernel (so to speak) of $\text{Sq}$ are substructures of the $(Q, +, e, S)$, it begs the question, is $\text{Sq}_{/^\circ}$ always an endomorphism? As we will see in Subsections \ref{SubSec:SmallOrderLBDS} (Remark \ref{Rmk:3pSqMapAnalysis}) and \ref{SubSec:SmallOrderLBTS} (Lemma \ref{Lmm:L3LBTS} and its proof), the answer is no.
\end{remark}
We will devote the rest of the section to studying the nature of $(Q, +, e, S)$ when $\text{Sq}_{/^\circ}$ is an endomorphism.
\begin{lemma}\label{Lmm:LBDSEndo}
  Let $(Q, +, e, S)$ be an LBDS. Suppose $\varphi:Q\to Q$ is a function such that $\varphi(e)=e$ and $\varphi S=S\varphi$. Then $\varphi$ is an endomomorphism of $(Q, +, e)$ if and only if it is an endomorphism of $(Q, \circ, \ld, /^\circ)$. In particular, $\emph{Sq}_{/^\circ}$ is an endomorphism of $(Q, +, e)$ if and only if it is an endomorphism of $(Q, \circ, \ld, /^\circ).$
\end{lemma}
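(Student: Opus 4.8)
The plan is to reduce everything to the relationship between the Bruck loop operation $+$ and the LBDS-quasigroup operation $\circ$, exploiting that these are connected by explicit isotopy formulas (Remark \ref{Rmk:LBDSLang}), and that the maps in question commute with $S$ and fix $e$. Recall that $x\circ y=2x-y^S$ and, conversely, $x+y=(x/^\circ e)\circ(e\circ y)$, with $x/^\circ e=\tfrac12 x$ and $e\circ y=-y^S$. So the two operations are related by the bijections $x\mapsto \tfrac12 x$ and $y\mapsto -y^S$, together with $S$ itself, all of which are automorphisms of the Bruck loop that commute with $S$ and fix $e$ — and, by the first half of the statement applied to these specific maps (or by direct verification), are automorphisms of the LBDS-quasigroup as well.

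First I would prove the general claim: a map $\varphi$ with $\varphi(e)=e$ and $\varphi S=S\varphi$ is a $+$-endomorphism iff it is a $\circ$-endomorphism. For the forward direction, assume $\varphi$ respects $+$. Then $\varphi(x\circ y)=\varphi(2x-y^S)$. Since $2x=x+x$ and $-y^S=e-y^S$ (or simply since inversion is an antiautomorphism that is actually an automorphism in a Bruck loop by the AIP, hence $\varphi$ commutes with $-$), and since $\varphi$ commutes with $S$, we get $\varphi(2x-y^S)=2\varphi(x)-\varphi(y)^S=\varphi(x)\circ\varphi(y)$; the only subtlety is pushing $\varphi$ through the subtraction $a-b=a+(-b)$, which follows because $\varphi$ is a loop homomorphism and loop homomorphisms between loops with two-sided inverses preserve inverses. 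For the converse, assume $\varphi$ respects $\circ$. Using $x+y=(\tfrac12 x)\circ(-y^S)$ and the facts $\varphi(\tfrac12 x)=\tfrac12\varphi(x)$ (because halving is the inverse of doubling and $\varphi$ preserves $\circ$, hence preserves $2x=x\circ e$, hence preserves its inverse map) and $\varphi(-y^S)=-\varphi(y)^S$ (because $-y^S=e\circ y$ and $\varphi$ fixes $e$ and preserves $\circ$), we compute $\varphi(x+y)=\varphi((\tfrac12 x)\circ(-y^S))=(\tfrac12\varphi(x))\circ(-\varphi(y)^S)=\varphi(x)+\varphi(y)$.

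Finally, I would specialize to $\varphi=\text{Sq}_{/^\circ}$. Here $\text{Sq}_{/^\circ}(e)=e/^\circ e=e$ since $e$ is idempotent, and $\text{Sq}_{/^\circ}$ commutes with $S$: indeed $S$ is an automorphism of $(Q,\circ,\ld,/^\circ)$ (it commutes with $\circ$ and $/^\circ$ by Remark \ref{Rmk:LBDSLang}/Lemma \ref{Lmm:SEndo}), so it commutes with the derived map $x\mapsto (R^\circ_x)^{-1}(x)$; concretely $\text{Sq}_{/^\circ}(x^S)=x^S/^\circ x^S=(x/^\circ x)^S=\text{Sq}_{/^\circ}(x)^S$ using Lemma \ref{Lmm:DihedLmm2}(b). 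Then the general claim applies verbatim, giving the ``in particular'' assertion.

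The main obstacle, such as it is, is bookkeeping rather than anything deep: one must be careful that "$\varphi$ preserves $\circ$" genuinely transfers to "$\varphi$ preserves the halving map and the map $y\mapsto -y^S$," which requires knowing these auxiliary maps are themselves expressible via $\circ$ and $e$ (doubling is $x\mapsto x\circ e$; its inverse is halving; $e\circ y=-y^S$) and that a $\circ$-endomorphism automatically commutes with the inverse of any translation it respects. Since $(Q,\circ)$ is a quasigroup, an endomorphism respecting $R^\circ_e$ respects $(R^\circ_e)^{-1}$, which handles halving; the rest is the observation that $\varphi(e)=e$ lets $\varphi$ pass through $L^\circ_e$. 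No genuinely new ideas are needed beyond the isotopy dictionary already set up.
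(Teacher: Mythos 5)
Your proposal is correct and follows essentially the same route as the paper: both directions come down to pushing $\varphi$ through the explicit isotopy formulas $x\circ y=2x-y^S$ and $x+y=(x/^\circ e)\circ(e\circ y)$, using $\varphi(e)=e$ and $\varphi S=S\varphi$, and the ``in particular'' clause is verified exactly as in the paper ($\mathrm{Sq}_{/^\circ}(e)=e$ and $S$ being a quasigroup automorphism). The only cosmetic difference is in the converse: the paper factors $\varphi$ as $(L^\circ_e)^{-1}(L^\circ_e\varphi)$ via Lemma \ref{Lmm:HomCorrespondence}(b) and the fact that $L^\circ_e:x\mapsto -x^S$ is a loop automorphism, whereas you compute $\varphi(x+y)$ directly from the defining formula, which is equally valid (and your appeal to Lemma \ref{Lmm:DihedLmm2}(b) for the commutation of $\mathrm{Sq}_{/^\circ}$ with $S$ could more simply cite the fact that $S$ preserves $/^\circ$).
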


\begin{proof}
    ($\implies$) This is just a special case of Lemma \ref{Lmm:HomCorrespondence}(a).

    $(\impliedby)$ By Lemma \ref{Lmm:HomCorrespondence}(b), $\psi=L^\circ_e \varphi$ is a loop endomorphism. Moreover, $L_e^\circ(x)=-x^S$ for all $x\in Q$, so $L^\circ_e$ is a loop automorphism. Therefore, $\varphi=(L^\circ_e)^{-1}\psi$ is a loop endomorphsim.

    The last statement of the lemma follows since $S$ is an automorphism with respect to $/^\circ$, and because $\text{Sq}_{/^\circ}(e)=e^S+\frac{1}{2}(-e^S+e)=e.$
    \end{proof}

\begin{theorem}\label{Thm:LBDSSplitExt}
Let $(Q, +, e, S)$ be an LBDS with associated quasigroup $(Q, \circ, \ld, /^\circ)$. If $\emph{Sq}_{/^\circ}$ is an endomorphism of $(Q, +, e)$, then we have a split extension

\begin{equation}\label{Eq:LBDSSPlitExt}
    \{e\}\to Q_e\to Q\to \emph{Sq}_{/^\circ}(Q)\to \{e\}
\end{equation}
in both the category of LBDS-quasigroups and the category of Bruck loops.
\end{theorem}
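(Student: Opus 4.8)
The plan is to exhibit $\text{Sq}_{/^\circ}$ as an idempotent endomorphism (a retraction) whose image is $\text{Sq}_{/^\circ}(Q)$ and whose fiber over $e$ is $Q_e$, and then read the split extension off from the general behaviour of retractions, first in the loop category and then transporting everything to the quasigroup category.

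First I would upgrade the hypothesis: by Lemma \ref{Lmm:LBDSEndo}, since $\text{Sq}_{/^\circ}$ is an endomorphism of $(Q,+,e)$, fixes $e$, and commutes with $S$ (as $S$ is an automorphism with respect to $/^\circ$, noted in the proof of that lemma), it is simultaneously an endomorphism of the LBDS-quasigroup $(Q,\circ,\ld,/^\circ)$. So from that point on $\text{Sq}_{/^\circ}$ is a homomorphism in both categories at once. Next I would observe that it is a retraction: by Proposition \ref{Prop:LBDSIdemp} every value $\text{Sq}_{/^\circ}(x)$ is idempotent, and by Lemma \ref{Lmm:IdempQugp} an idempotent $h$ satisfies $h/^\circ h=h$, i.e. $\text{Sq}_{/^\circ}(h)=h$; hence $\text{Sq}_{/^\circ}^2=\text{Sq}_{/^\circ}$, its image is exactly $\text{Sq}_{/^\circ}(Q)$, and it restricts to the identity there. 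Writing $\pi\colon Q\to\text{Sq}_{/^\circ}(Q)$ for the corestriction and $\iota$ for the inclusion, we have $\pi\iota=1$. By Proposition \ref{Prop:IdempSubqugp}, $\text{Sq}_{/^\circ}(Q)$ and $Q_e=\pi^{-1}(e)$ are subquasigroups and subloops of $Q$; on the former $S=1_Q$, so it is a type-\eqref{Eq:GenofDerSoln} LBDS ($x\circ y=2x-y$, $x\bullet y=x$), and on the latter $S$ is negation (an automorphism of $(Q,+,e)$ by the AIP), so it is a type-\eqref{Eq:GenofSmSoln} LBDS ($x\circ y=2x+y$, $x\bullet y=-x$).

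Then I would exhibit the decomposition. For $x\in Q$ put $h=\text{Sq}_{/^\circ}(x)\in\text{Sq}_{/^\circ}(Q)$ and $n=-h+x$. Since $\text{Sq}_{/^\circ}$ is a loop endomorphism fixing $h$, we get $\text{Sq}_{/^\circ}(n)=-\text{Sq}_{/^\circ}(h)+\text{Sq}_{/^\circ}(x)=-h+h=e$, so $n\in Q_e$, while $h+n=x$ by the LIP; thus $Q=\text{Sq}_{/^\circ}(Q)+Q_e$. If $x\in\text{Sq}_{/^\circ}(Q)\cap Q_e$ then $x=\text{Sq}_{/^\circ}(x)=e$, so the intersection is trivial. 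Since $Q_e$ is the fiber over $e$ of the loop homomorphism $\text{Sq}_{/^\circ}$ it is a normal subloop, and $\text{Sq}_{/^\circ}(Q)$ is a complement: this is precisely the data of the split extension \eqref{Eq:LBDSSPlitExt} of Bruck loops. In the category of LBDS-quasigroups the same pair $\iota,\pi$ — now quasigroup homomorphisms by the first step, with $\pi\iota=1$ — witnesses the split exact sequence, the role of ``$Q_e$'' being played by the kernel congruence of $\pi$, whose $e$-block is exactly $Q_e$ by definition of that congruence.

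I do not expect a deep obstacle here; the content is bookkeeping resting on Lemmas \ref{Lmm:LBDSEndo} and \ref{Lmm:IdempQugp} and Propositions \ref{Prop:LBDSIdemp} and \ref{Prop:IdempSubqugp}. The one place that demands care is making the loop argument run verbatim in the quasigroup category: one must note that the inclusion of a subquasigroup and the corestriction of an endomorphism are LBDS-quasigroup homomorphisms, and interpret ``split extension of quasigroups'' through the kernel congruence of $\pi$ (whose $e$-block is $Q_e$) rather than through a normal subobject — once the first step supplies that $\text{Sq}_{/^\circ}$ is a homomorphism in both categories, this is routine.
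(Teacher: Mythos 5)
Your proposal is correct and follows essentially the same route as the paper: treat $\text{Sq}_{/^\circ}$ as a loop endomorphism with kernel $Q_e$ and image $\text{Sq}_{/^\circ}(Q)$, invoke the first isomorphism theorem, check that $Q_e\cap\text{Sq}_{/^\circ}(Q)=\{e\}$ to get the splitting, and transfer to the quasigroup category via Lemma \ref{Lmm:LBDSEndo}. The only (cosmetic) difference is that you derive the trivial intersection from the retraction property $\text{Sq}_{/^\circ}^2=\text{Sq}_{/^\circ}$, whereas the paper uses $x=x^S=-x$ together with unique $2$-divisibility.
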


\begin{proof}
By assumption, the first isomorphism theorem for loops applies (cf. \cite[Thm.~IV.1.1]{Bruck58}). That is, $\text{Sq}_{/^\circ}(Q)\cong Q/Q_e$ as loops. The inclusion of $\text{Sq}_{/^\circ}(Q)$ in $Q$ splits \eqref{Eq:LBDSSPlitExt} if $Q_e\cap \text{Sq}_{/^\circ}(Q)=\{e\}$. Indeed, by Proposition \ref{Prop:IdempSubqugp}, if $x\in Q_e\cap \text{Sq}_{/^\circ}(Q)$ then $x=x^S=-x$, so $2x=e$, which, by $2$-divisibility, yields $x=e$. By Lemma \ref{Lmm:LBDSEndo}, the sequence \eqref{Eq:LBDSSPlitExt} also lives in the category of LBDS-quasigroups.
\end{proof}

\begin{corollary}\label{Cor:LBDSSPlitExt}
    Let $(Q, +, e, S)$ be an LBDS such that $\emph{Sq}_{/^\circ}$ is endomorphic,
    \begin{itemize}
        \item[(a)] the Bruck loop automorphism $S$ is upper-triangularizable in the sense that $Q$ is a split extension of a normal subloop upon which $S$ acts as $-1$ with a subloop upon which $S$ acts as $1.$
        \item[(b)] $(Q, +, e, S)$ is a split extension of a solution of type \eqref{Eq:GenofSmSoln} with one of type \eqref{Eq:GenofDerSoln}.
    \end{itemize}
\end{corollary}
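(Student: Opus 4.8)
The plan is to read off both statements directly from Theorem~\ref{Thm:LBDSSplitExt} together with the explicit identification of the two factor subloops furnished by Proposition~\ref{Prop:IdempSubqugp}. Theorem~\ref{Thm:LBDSSplitExt} already provides the split short exact sequence $\{e\}\to Q_e\to Q\to \text{Sq}_{/^\circ}(Q)\to\{e\}$ simultaneously in the category of Bruck loops and in the category of LBDS-quasigroups, so the only thing left to do is to describe how $S$, and then the solution $r$, behave when restricted to $Q_e$ and to $\text{Sq}_{/^\circ}(Q)$.

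For part (a): by Proposition~\ref{Prop:IdempSubqugp}(b) the normal subloop $Q_e$ equals $\{x\in Q\mid x^S=-x\}$, so $S$ restricts on $Q_e$ to the inversion map $x\mapsto -x$; by Proposition~\ref{Prop:IdempSubqugp}(a) the complement $\text{Sq}_{/^\circ}(Q)$ equals $\{x\in Q\mid x^S=x\}$, so $S$ restricts there to the identity. Since the extension of Theorem~\ref{Thm:LBDSSplitExt} splits, this is precisely the asserted ``upper-triangularization'' of $S$: $Q$ is a split extension of the normal subloop $Q_e$, on which $S$ acts as $-1$, by the subloop $\text{Sq}_{/^\circ}(Q)$, on which $S$ acts as $1$.

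For part (b): I would substitute these two descriptions of $S$ into the LBDS formulas recorded in Remark~\ref{Rmk:LBDSLang}, namely $x\circ y=2x-y^S$ and $x\bullet y=x^S$. On $Q_e$, where $x^S=-x$, this gives $x\circ y=2x+y$ and $x\bullet y=-x$, so $r|_{Q_e}(x,y)=(2x+y,-x)$, a solution of type~\eqref{Eq:GenofSmSoln}. On $\text{Sq}_{/^\circ}(Q)$, where $x^S=x$, this gives $x\circ y=2x-y$ and $x\bullet y=x$, so $r|_{\text{Sq}_{/^\circ}(Q)}(x,y)=(2x-y,x)$, a solution of type~\eqref{Eq:GenofDerSoln}. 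Combined with the split extension of Theorem~\ref{Thm:LBDSSplitExt}, this exhibits $(Q,+,e,S)$ as a split extension of a type-\eqref{Eq:GenofSmSoln} solution with a type-\eqref{Eq:GenofDerSoln} solution, as claimed.

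The only potential obstacle is purely bookkeeping: one must be sure that ``split extension of solutions'' is being used in the sense compatible with the loop- and quasigroup-theoretic split extension of Theorem~\ref{Thm:LBDSSplitExt}, i.e.\ that the LBDS structure on $Q$ is genuinely recovered from the LBDS structures on $Q_e$ and $\text{Sq}_{/^\circ}(Q)$ together with the semidirect loop structure. But because Theorem~\ref{Thm:LBDSSplitExt} already delivers the splitting in the LBDS-quasigroup category, and because the passage between LBDS and uniquely $2$-divisible Bruck loops with involution is functorial (Lemma~\ref{Lmm:HomCorrespondence}, Theorem~\ref{Thm:LBDSIsom}), this compatibility is automatic; no computation beyond the two short restriction identities above is required. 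Hence the corollary is immediate.
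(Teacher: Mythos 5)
Your proposal is correct and matches the intended argument: the paper states this corollary without proof as an immediate consequence of Theorem~\ref{Thm:LBDSSplitExt} and Proposition~\ref{Prop:IdempSubqugp}, and your identification of $S$ as $-1$ on $Q_e$ and as $1$ on $\mathrm{Sq}_{/^\circ}(Q)$, together with the substitution into $r(x,y)=(2x-y^S,x^S)$ yielding $(2x+y,-x)$ and $(2x-y,x)$ respectively, is exactly the computation being left to the reader.
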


\subsection{LBDS of orders $p,$ $p^2$, $3p$, $27$, and $81$}\label{SubSec:SmallOrderLBDS}
We will now use Theorem \ref{Thm:LBDSIsom} to classify LBDS of orders $p, p^2$ and $3p$, where $p$ is an odd prime. Then, with the help of the \texttt{LOOPS} package in \texttt{GAP}, we shall enumerate isomorphism classes for LBDS of order $27$ and $81$.

\subsubsection{Orders $p$ and $p^2$: the associative case} \label{SubSubSec:Orderpp2} Bruck loops of order $p$ and $p^2$ --for $p$ and odd prime-- are groups \cite{Burn}. Therefore, the description of LBDS at these orders is straightforward. Indeed, if $(Q, +, e, S)$ has order $p$, then the underlying Bruck loop must be $(\mathbb{Z}/_p, +, 0)$, and $S=\pm 1\in(\mathbb{Z}/_p)^*$.
Similarly, if $(Q, r)$ has order $p^2$ and the underlying Bruck loop is $(\mathbb{Z}/_{p^2}, +, 0),$ then $S=\pm 1\in (\mathbb{Z}/_{p^2})^*$. Next, suppose $|Q|=p^2$ and the underlying loop is $((\mathbb{Z}/_{p})^2, +, 0)$. The possible rational canonical forms for involutions of $(\mathbb{Z}/_p)^2$ are $$\left(\begin{array}{cc}
    1 & 0 \\
    0 & 1
\end{array}\right), \left(\begin{array}{cc}
    -1 & 0 \\
    0 & -1
\end{array}\right), \text{ and} \left(\begin{array}{cc}
    1 & 0 \\
    0 & -1
\end{array}\right),$$ so $S$ is conjugate to one of these. We summarize our discussion in the following theorem.

\begin{theorem}\label{Thm:LBDSpp2}
    Let $p$ be an odd prime. There are $2$ isomorphism classes of LBDS of order $p$ and $5$ isomorphism classes of order $p^2$. These classes have the following representatives:
    \begin{itemize}
        \item[(a)] $(\mathbb{Z}/_p, +, 0, \pm 1)$;
        \item[(b)] $(\mathbb{Z}/_{p^2}, +, 0, \pm 1)$, $((\mathbb{Z}/_p)^2, 0, \pm 1)$, and $((\mathbb{Z}/_p)^2, +, 0, 1\oplus -1)$.
    \end{itemize}
\end{theorem}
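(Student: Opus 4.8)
The plan is to invoke Theorem~\ref{Thm:LBDSIsom}, which reduces the classification of LBDS up to isomorphism to the classification of pairs $(Q, +, e, S)$ consisting of a uniquely $2$-divisible Bruck loop together with an involutory automorphism, taken up to the natural notion of equivalence (a Bruck loop isomorphism $\varphi$ with $\varphi S = T\varphi$). So the first step is to pin down the possible uniquely $2$-divisible Bruck loops of orders $p$ and $p^2$. Here I would cite Burn's result \cite{Burn} that all Bruck loops (indeed all Moufang loops, or even all loops with the relevant nilpotency) of order $p$ or $p^2$ are abelian groups; combined with unique $2$-divisibility (which for odd order is automatic, by \cite[Prop.~1]{Glauberman}), this leaves exactly $(\mathbb{Z}/_p, +, 0)$ in order $p$, and $(\mathbb{Z}/_{p^2}, +, 0)$ and $((\mathbb{Z}/_p)^2, +, 0)$ in order $p^2$.

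Next I would enumerate the involutory automorphisms of each of these groups up to conjugacy in the automorphism group. For $\mathbb{Z}/_p$, $\mathrm{Aut} = (\mathbb{Z}/_p)^*$ is cyclic of even order $p-1$, so it has a unique element of order $2$, namely $-1$, plus the identity; the two are not conjugate (the group is abelian), giving the $2$ classes $S = \pm 1$. The same argument applies verbatim to $\mathbb{Z}/_{p^2}$, whose automorphism group $(\mathbb{Z}/_{p^2})^*$ is cyclic of order $p(p-1)$, again with a unique involution $-1$; this yields the two classes $(\mathbb{Z}/_{p^2}, +, 0, \pm 1)$. For $(\mathbb{Z}/_p)^2$, an involution $S \in \mathrm{GL}_2(\mathbb{Z}/_p)$ satisfies $S^2 = I$, and since $p$ is odd the minimal polynomial divides $x^2 - 1 = (x-1)(x+1)$ with distinct roots, so $S$ is diagonalizable with eigenvalues in $\{1, -1\}$; the conjugacy class is determined by the multiset of eigenvalues, giving exactly $I$, $-I$, and $\mathrm{diag}(1, -1)$. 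Conjugacy in $\mathrm{GL}_2(\mathbb{Z}/_p)$ is precisely the equivalence relation of Theorem~\ref{Thm:LBDSIsom} here (a group automorphism of $(\mathbb{Z}/_p)^2$ is an element of $\mathrm{GL}_2$, and $\varphi S \varphi^{-1} = T$ is matrix conjugacy), so these three matrices represent distinct classes. That accounts for all $3$ classes at the elementary abelian loop, and $2 + 3 = 5$ classes at order $p^2$ in total.

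The remaining point is to make sure no cross-identifications occur: an LBDS whose underlying loop is $\mathbb{Z}/_{p^2}$ can never be isomorphic to one whose underlying loop is $(\mathbb{Z}/_p)^2$, since by Theorem~\ref{Thm:LBDSIsom} such an isomorphism would in particular be a Bruck loop isomorphism, and $\mathbb{Z}/_{p^2} \not\cong (\mathbb{Z}/_p)^2$. Hence the five listed representatives are pairwise non-isomorphic and exhaust all LBDS of order $p^2$, and the two listed in order $p$ likewise. I expect the only genuinely substantive ingredient to be the classification of the underlying loops (the appeal to \cite{Burn}); everything else is elementary linear algebra over $\mathbb{Z}/_p$ and an easy count in cyclic groups. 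A minor bookkeeping subtlety worth a sentence is that the diagonalizability of $S$ over $\mathbb{Z}/_p$ uses oddness of $p$ (so that $2$ is invertible and $x^2-1$ is separable), which is exactly the hypothesis in force.
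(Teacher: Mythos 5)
Your proposal is correct and follows essentially the same route as the paper: reduce via Theorem~\ref{Thm:LBDSIsom} to conjugacy classes of involutory automorphisms, use Burn's theorem to see the underlying Bruck loops are the three abelian groups, and classify involutions in each automorphism group (the paper phrases the $(\mathbb{Z}/_p)^2$ case via rational canonical forms, which is the same linear algebra as your diagonalizability argument). Your added remarks on separability of $x^2-1$ and on the impossibility of cross-identifications between non-isomorphic underlying loops are points the paper leaves implicit, but they do not change the argument.
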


\subsubsection{Order $3p$}\label{SubSubSec:Order3p}
Now, we assume $(Q, +, e, S)$ has order $3p$, where $p>3$ is prime. If $(Q, +, e)$ is associative, it is an abelian group and thus $(Q, +, e)\cong(\mathbb{Z}/_{3p}, +, 0)$. So once, again, $S=\pm1$.

To address the nonassociative case, we proceed with a general discussion of $B_{p, 3}$. When $q=3$, the operation of \eqref{Eq:Bpq} can be summarized by the following condensed multiplication table

\begin{center}
\begin{tabular}{c||c|c|c}
$B_{p, 3}$ & $(0, l)$ & $(1, l)$ & $(-1, l)$\\
\hline
\hline
    $(0, j)$ & $(0, j+l)$& $(1, j+l)$ & $(-1, j+l)$   \\
    \hline
   $(1, j)$  & $(1, j-2l)$  & $(-1, -2j+l)$ & $(0, 2^{-1}(-j-l))$ \\
   \hline
     $(-1, j)$  & $(-1, j-2l)$  & $(0, 2^{-1}(-j-l))$ & $(1, -2j+l)$
\end{tabular}
\end{center}

\noindent In the ensuing discussion, whenever possible, we state our results in terms of $B_{p, q}$. However, ultimately, we will only be describing LBDS over $B_{p, 3}$.

\begin{lemma}\label{Lmm:BpqGens}
    The standard generators $(1, 0), (0, 1)\in \mathbb{Z}/_q\times\mathbb{Z}/_p$ generate $B_{p, q}$. In particular, for $(m, n)\in B_{p, q},$ $$(m, n)=((0, 1)^{\boxplus n})\boxplus((1, 0)^{\boxplus m}).$$
\end{lemma}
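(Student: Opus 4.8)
The plan is to compute the two relevant one-generator powers explicitly and then multiply them, so that the displayed identity drops out by direct substitution into \eqref{Eq:Bpq}; the generation statement is then immediate. First I would record that $\theta_0=\frac{2}{\omega^0+\omega^0}=1$. Consequently the slice $\{0\}\times\mathbb{Z}/_p$ is closed under $\boxplus$ and the operation there is just $(0,j)\boxplus(0,l)=(0,j+l)$. Since Bruck loops are power-associative (cf. the discussion around \eqref{Eq:PowerAssoc}), the symbol $(0,1)^{\boxplus n}$ is unambiguous, and an easy induction on $n$ gives $(0,1)^{\boxplus n}=(0,n)$ for every $n\in\mathbb{Z}/_p$.

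Next I would show $(1,0)^{\boxplus m}=(m,0)$ for all $m$, where the first coordinate is read modulo $q$ and $\theta_m:=\theta_{m\bmod q}$. Again by power-associativity it suffices to handle the inductive step $(1,0)^{\boxplus(m+1)}=(1,0)^{\boxplus m}\boxplus(1,0)$; substituting $(i,j)=(m,0)$ and $(k,l)=(1,0)$ into \eqref{Eq:Bpq}, the second coordinate of the product is a scalar multiple of $0$, hence $0$, while the first coordinate is $m+1$. So $(1,0)^{\boxplus m}=(m,0)$.

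Finally I would combine the two computations: taking $(i,j)=(0,n)$ and $(k,l)=(m,0)$ in \eqref{Eq:Bpq} yields
\[
(0,n)\boxplus(m,0)=\left(m,\ \frac{\theta_m+\theta_m}{\theta_m+\theta_m\theta_0}\,n+\frac{\theta_m}{\theta_m}\cdot 0\right)=(m,n),
\]
since $\theta_0=1$ forces the coefficient of $n$ to be $1$. Hence $\bigl((0,1)^{\boxplus n}\bigr)\boxplus\bigl((1,0)^{\boxplus m}\bigr)=(0,n)\boxplus(m,0)=(m,n)$, which is the asserted formula; and as $(m,n)$ runs over all of $\mathbb{Z}/_q\times\mathbb{Z}/_p$ while $m,n$ vary, the subloop generated by $(1,0)$ and $(0,1)$ is all of $B_{p,q}$.

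There is essentially no real obstacle: every step is a substitution into \eqref{Eq:Bpq}, and the only structural input is power-associativity of Bruck loops, which makes the iterated powers well-defined. The sole point to state carefully is that the denominators occurring in \eqref{Eq:Bpq} are nonzero elements of the relevant field — but this is already built into the hypothesis that $B_{p,q}$ is a well-defined loop, so the fractions multiplying $0$ are genuine scalars and the products collapse exactly as claimed. (One may equivalently run the argument through the condensed multiplication table for $B_{p,3}$, but the general computation is cleaner and covers all $q$.)
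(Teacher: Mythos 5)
Your proposal is correct and follows essentially the same route as the paper's proof: compute $(0,1)^{\boxplus n}=(0,n)$ using $\theta_0=1$, compute $(1,0)^{\boxplus m}=(m,0)$ by noting the second coordinate stays zero, and then substitute $(i,j)=(0,n)$, $(k,l)=(m,0)$ into \eqref{Eq:Bpq} so that the coefficient of $n$ collapses to $\frac{2\theta_m}{2\theta_m}=1$. The only cosmetic difference is that you spell out the induction and the well-definedness of powers via power-associativity, which the paper leaves implicit.
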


\begin{proof}
    Clearly, $(i, 0)\boxplus(k, 0)=(i+k, 0)$ for any $i, k\in \mathbb
Z/_{q}$. Therefore, $(1, 0)^{\boxplus m}=(m , 0)$. Since $\theta_0=1$, $(0, j)\boxplus (0, l)=(0, j+l)$ for all $j, l\in \mathbb{Z}/_p$. Thus, $(0, 1)^{\boxplus n}=(0, n).$ Conclude
    \begin{align*}
        ((0, 1)^{\boxplus n})\boxplus((1, 0)^{\boxplus m})&=(0, n)\boxplus(m, 0)\\
        &=\left(0+m, \frac{2\theta_m}{2\theta_m}n+\frac{\theta_m}{\theta_m}0\right)\\
        &=(m, n).
    \end{align*}
\end{proof}

\begin{lemma}\label{Lmm:BpqSublps}
    The Bruck loop $B_{p, q}$ contains a unique subloop of order $p$, $\langle(0, 1)\rangle$, and $p$-many subloops of order $q$, $\langle(1, n)\rangle$ for all $n\in \mathbb{Z}/_p.$
\end{lemma}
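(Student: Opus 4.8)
The plan is to reduce everything to the behaviour of the first--coordinate projection. Write $\pi\colon B_{p,q}\to\mathbb{Z}/_q$, $(m,n)\mapsto m$. By \eqref{Eq:Bpq} the first coordinate of $(i,j)\boxplus(k,l)$ is $i+k$, so $\pi$ is a surjective loop homomorphism, and since $\theta_0=1$ its kernel is the subgroup $N=\{(0,j)\mid j\in\mathbb{Z}/_p\}\cong\mathbb{Z}/_p$, which by Lemma~\ref{Lmm:BpqGens} (or simply because $N$ is cyclic of prime order) equals $\langle(0,1)\rangle$. In particular $N$ is a normal subloop of order $p$. The one elementary fact that drives the proof is that for \emph{any} subloop $H\leq B_{p,q}$ the fibres of $\pi|_H$ over the points of $\pi(H)$ are the left cosets of $H\cap N$ in $H$ (a loop homomorphism preserves division, so $\pi(x)=\pi(h)$ exactly when $x$ lies in the coset of $H\cap N$ through $h$); consequently $|H|=|\pi(H)|\cdot|H\cap N|$, and $H\cap N$, being a subgroup of $\mathbb{Z}/_p$, has order $1$ or $p$.

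For the order-$p$ claim, let $|H|=p$. If $\pi(H)=\mathbb{Z}/_q$ then $q$ divides $|H|=p$, which is absurd; hence $\pi(H)=\{0\}$, i.e.\ $H\subseteq N$, and since $|H|=|N|$ this forces $H=N=\langle(0,1)\rangle$. So $\langle(0,1)\rangle$ is the unique subloop of order $p$.

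For the order-$q$ claim I would first pin down $|\langle(1,n)\rangle|$ for each $n\in\mathbb{Z}/_p$. Since $B_{p,q}$ is a Bruck (hence Bol, hence power-associative) loop, $\langle(1,n)\rangle$ is a cyclic group, and applying $\pi$ shows $q=|\langle 1\rangle|$ divides its order. If $\langle(1,n)\rangle\cap N\neq\{(0,0)\}$ then $N\subseteq\langle(1,n)\rangle$, so the order of this cyclic group is a multiple of both $p$ and $q$, hence of $pq=|B_{p,q}|$, forcing $\langle(1,n)\rangle=B_{p,q}$ --- impossible, as $B_{p,q}$ is nonassociative. So $\langle(1,n)\rangle\cap N=\{(0,0)\}$, and the fibre count gives $|\langle(1,n)\rangle|=|\pi(\langle(1,n)\rangle)|=q$. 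To see these $p$ subloops are distinct and exhaustive: any subloop $H$ with $|H|=q$ has $|H\cap N|=1$ (otherwise $|H|\geq p>q$), so $\pi|_H$ is a bijection onto $\mathbb{Z}/_q$; thus $\langle(1,n)\rangle=\langle(1,n')\rangle$ would place $(1,n)$ and $(1,n')$ in a common subloop on which $\pi$ is injective, giving $n=n'$, while for an arbitrary order-$q$ subloop $H$ the bijectivity of $\pi|_H$ yields a unique $(1,n)\in H$, and then $\langle(1,n)\rangle\leq H$ with both of order $q$, so $H=\langle(1,n)\rangle$.

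The only real care needed is with the loop-theoretic generalities: that the fibre count for $\pi|_H$ works exactly as in group theory (it uses only that loop homomorphisms preserve division and the left-division axiom) and that single-generated Bruck subloops are cyclic groups (available from the excerpt's remark that Bol loops are power-associative); the appeal to nonassociativity of $B_{p,q}$ to exclude $\langle(1,n)\rangle=B_{p,q}$ is the one genuinely non-formal input. If one wants a fully self-contained treatment of the only case used later, namely $q=3$, the condensed multiplication table gives $(1,n)^{\boxplus 2}=(-1,-n)$ and $(1,n)^{\boxplus 3}=(0,0)$ by direct inspection, so $\langle(1,n)\rangle=\{(0,0),(1,n),(-1,-n)\}$ has order $3$ with no appeal to power-associativity or nonassociativity at all.
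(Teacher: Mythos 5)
Your proof is correct, but it takes a genuinely different route from the paper. The paper disposes of this lemma in one line by citing \cite{NR}: any Bol loop of order $pq$ has a unique subloop of order $p$, and in the nonassociative case every non-identity element has order $p$ or $q$; the claimed list of subloops then drops out. You instead give a self-contained argument built on the surjective loop homomorphism $\pi\colon B_{p,q}\to\mathbb{Z}/_q$ visible in \eqref{Eq:Bpq}, the fibre count $|H|=|\pi(H)|\cdot|H\cap N|$ for subloops $H$ (which, as you note, needs only that loop homomorphisms preserve left division, so the fibres of $\pi|_H$ are left cosets of $H\cap N$ and all have the same size), power-associativity of Bol loops to make $\langle(1,n)\rangle$ a cyclic group, and the nonassociativity of $B_{p,q}$ to rule out $\langle(1,n)\rangle=B_{p,q}$. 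All of these inputs are either in the paper (power-associativity is \eqref{Eq:PowerAssoc}; nonassociativity is part of the definition of $B_{p,q}$ in Example~\ref{Ex:Bpq}) or elementary, and I see no gap: in particular your identification of $\ker\pi$ with $\langle(0,1)\rangle$ uses $\theta_0=1$ correctly, and your exhaustiveness argument (an order-$q$ subloop meets $N$ trivially, so $\pi$ restricts to a bijection and the subloop contains a unique $(1,n)$) is sound. What the paper's approach buys is brevity and generality --- the Niederreiter--Robinson theorem covers all Bol loops of order $pq$, not just the explicit $B_{p,q}$. What yours buys is independence from that external structure theorem, plus, in the closing paragraph, a completely computation-level verification for the only case the paper subsequently uses ($q=3$), where $(1,n)^{\boxplus 2}=(-1,-n)$ and $(1,n)^{\boxplus 3}=(0,0)$ can be read off the condensed multiplication table directly.
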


\begin{proof}
    This is due to \cite[Thm.~1]{NR}, which states that any Bol loop of order $pq$ has a unique subloop of order $p$ and, if the loop is nonassociative, each non-identity element has order $p$ or order $q$.
\end{proof}

\begin{definition}\label{Def:LambdaGroup}
   For an odd prime $p$, let $\Lambda_p$ denote the set of functions $\lambda_b:\mathbb{Z}/_3\to \mathbb{Z}/_p$ such that $\lambda_b(0)=0$, $\lambda_b(\pm 1\mod 3)=\pm b\mod p$ for some $b\in\mathbb{Z}/_p$. Moreover, for $a\in \mathbb{Z}/_3$, let $_a\lambda_b:x\mapsto \lambda_b(ax)$.
\end{definition}

\begin{lemma}\label{Lmm:LPSubSP}
    The set $\Lambda_p$ forms a $1$-dimensional subspace of the $\mathbb{Z}/_p$-vector space $\emph{Hom}_{\emph{Set}}(\mathbb{Z}/_3, \mathbb{Z}/_p)$. Moreover, for $a\in \mathbb{Z}/_3,$ $_a\lambda_b\in\Lambda_p$.
\end{lemma}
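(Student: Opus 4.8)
The plan is to exhibit $\Lambda_p$ as the image of a single $\mathbb{Z}/_p$-linear map and then read off its dimension. First I would observe that a function $\lambda_b\in\Lambda_p$ is completely determined by the scalar $b$: its values on the three elements $0,1,2$ of $\mathbb{Z}/_3$ are forced to be $0$, $b$, and (since $2\equiv-1\pmod 3$) $-b$. Hence $\Lambda_p=\{\lambda_b\mid b\in\mathbb{Z}/_p\}$, and the assignment $\iota\colon\mathbb{Z}/_p\to\mathrm{Hom}_{\mathrm{Set}}(\mathbb{Z}/_3,\mathbb{Z}/_p)$, $b\mapsto\lambda_b$, is a well-defined map whose image is $\Lambda_p$. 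Here $\mathrm{Hom}_{\mathrm{Set}}(\mathbb{Z}/_3,\mathbb{Z}/_p)$ is understood to carry its pointwise $\mathbb{Z}/_p$-vector space structure, so that talking about subspaces makes sense.

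Next I would verify that $\iota$ is $\mathbb{Z}/_p$-linear, which amounts to checking, for each $x\in\{0,1,2\}$, that $\lambda_{b+c}(x)=\lambda_b(x)+\lambda_c(x)$ and $\lambda_{cb}(x)=c\,\lambda_b(x)$: at $x=0$ both sides vanish, at $x=1$ the two identities read $b+c=b+c$ and $cb=cb$, and at $x=2$ they read $-(b+c)=(-b)+(-c)$ and $-(cb)=c(-b)$. Therefore $\Lambda_p=\mathrm{im}\,\iota$ is a $\mathbb{Z}/_p$-subspace of $\mathrm{Hom}_{\mathrm{Set}}(\mathbb{Z}/_3,\mathbb{Z}/_p)$. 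To see that it is exactly one-dimensional, note that $\iota$ is injective, because $b=\lambda_b(1)$ recovers $b$ from $\lambda_b$, and that $\Lambda_p\neq\{0\}$, because $\lambda_1(1)=1\neq 0$ in the field $\mathbb{Z}/_p$ ($p$ an odd prime). Hence $\dim_{\mathbb{Z}/_p}\Lambda_p=\dim_{\mathbb{Z}/_p}\mathbb{Z}/_p=1$.

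For the closure statement I would simply evaluate ${}_a\lambda_b\colon x\mapsto\lambda_b(ax)$ case by case in $\mathbb{Z}/_3$. For $a=0$ we get ${}_0\lambda_b(x)=\lambda_b(0)=0$ for every $x$, so ${}_0\lambda_b=\lambda_0$. For $a=1$ we get ${}_1\lambda_b=\lambda_b$. For $a=2\equiv-1$ we get ${}_2\lambda_b(0)=0$, ${}_2\lambda_b(1)=\lambda_b(-1)=-b$, and ${}_2\lambda_b(2)=\lambda_b(-2)=\lambda_b(1)=b$, so ${}_2\lambda_b=\lambda_{-b}$. In all three cases ${}_a\lambda_b\in\Lambda_p$; one can phrase this uniformly as ${}_a\lambda_b=\lambda_{\varepsilon(a)b}$, where $\varepsilon\colon\mathbb{Z}/_3\to\{-1,0,1\}\subset\mathbb{Z}$ is the least-absolute-value lift.

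I do not expect a genuine obstacle: every step is a direct computation over the three-element set $\mathbb{Z}/_3$. The only points that require a little care are the identification $2\equiv-1\pmod 3$, the observation that the hypothesis that $p$ is an odd prime guarantees $1\neq 0$ in $\mathbb{Z}/_p$ (so $\Lambda_p$ is genuinely one-dimensional rather than zero-dimensional), and fixing at the outset that $\mathrm{Hom}_{\mathrm{Set}}(\mathbb{Z}/_3,\mathbb{Z}/_p)$ is being viewed with its pointwise vector-space structure.
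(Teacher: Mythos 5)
Your proof is correct and follows essentially the same route as the paper: both arguments reduce to the pointwise computation showing that linear combinations of the $\lambda_b$ are again of the form $\lambda_{b'}$ (the paper verifies $d\lambda_c+\lambda_b=\lambda_{dc+b}$ directly, you package the same computation as linearity of $b\mapsto\lambda_b$), followed by injectivity of $b\mapsto\lambda_b$ for the dimension count and the identical three-case evaluation ${}_0\lambda_b=\lambda_0$, ${}_1\lambda_b=\lambda_b$, ${}_{-1}\lambda_b=\lambda_{-b}$ for closure.
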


\begin{proof}
   For $b, c, d\in \mathbb{Z}/_p$, $(d\lambda_c+\lambda_b)(0)=d0+0=0=\lambda_{dc+b}(0)$ and $(d\lambda_c+\lambda_b)(\pm 1 \mod 3)=d(\pm c)+(\pm b)\mod p=\pm(dc+b)\mod p=\lambda_{dc+b}(\pm1 \mod 3)$. Clearly, $\lambda_c=\lambda_b$ if and only if $c=b$, so $\dim(\Lambda_p)=1$. Finally, note that $_0\lambda_b=\lambda_0$, $_1\lambda_b=\lambda_b$, and $_{-1}\lambda_b=\lambda_{-b}$.
\end{proof}

\begin{remark}\label{Rmk:ModClasses}
    The proof of Lemma \ref{Lmm:LPSubSP} shows that when performing calculations, we can interchange $_a\lambda_b$ and $\lambda_{ab}$, where it is understood that the only representatives we'll use for $a$ are $0$ and $\pm 1$, both modulo $3$ and $p$.
\end{remark}

\begin{proposition}\label{Prop:Ap3Group}
    For $a\in \mathbb{Z}/_3$, $b\in \mathbb{Z}/_{p}$, and $\lambda_c\in \Lambda_p$, define a function $$[a, b, \lambda_c]:\mathbb{Z}/_3\times \mathbb{Z}/_p\to\mathbb{Z}/_3\times \mathbb{Z}/_p;(x, y)\mapsto (ax, by+\lambda_c(x)).$$ The set
    \begin{equation}
        \mathcal{A}_{p, 3}=\{[a, b, \lambda_c]\mid a\in (\mathbb{Z}/_3)^*, b\in (\mathbb{Z}/_p)^*, \lambda_c\in \Lambda_p\}
    \end{equation}
    forms an order $2p(p-1)$ group under composition of functions.
\end{proposition}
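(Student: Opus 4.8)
The plan is to exhibit $\mathcal{A}_{p,3}$ as a set of permutations of $\mathbb{Z}/_3\times\mathbb{Z}/_p$ that is closed under composition and inversion, and then count. First I would check that each $[a,b,\lambda_c]$ is a bijection: its inverse is computed directly by solving $(ax, by+\lambda_c(x)) = (x', y')$, giving $x = a^{-1}x'$ and $y = b^{-1}(y' - \lambda_c(a^{-1}x'))$. Recognizing $\lambda_c(a^{-1}x') = {}_{a^{-1}}\lambda_c(x') = \lambda_{a^{-1}c}(x')$ via Remark \ref{Rmk:ModClasses} and Lemma \ref{Lmm:LPSubSP}, this shows $[a,b,\lambda_c]^{-1} = [a^{-1}, b^{-1}, \lambda_{-a^{-1}b^{-1}c}]$ — in particular the inverse lies in $\mathcal{A}_{p,3}$ because $a^{-1}\in(\mathbb{Z}/_3)^*$, $b^{-1}\in(\mathbb{Z}/_p)^*$, and $\Lambda_p$ is closed under the relevant operations by Lemma \ref{Lmm:LPSubSP}.

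Next I would verify closure under composition. Computing $[a,b,\lambda_c]\circ[a',b',\lambda_{c'}]$ on $(x,y)$ gives first $(a'x, b'y+\lambda_{c'}(x))$ and then $(aa'x,\; b(b'y+\lambda_{c'}(x))+\lambda_c(a'x)) = (aa'x,\; bb'y + b\lambda_{c'}(x) + {}_{a'}\lambda_c(x))$. So the composite is $[aa',\, bb',\, b\lambda_{c'} + {}_{a'}\lambda_c]$, and by Lemma \ref{Lmm:LPSubSP} the function $b\lambda_{c'} + {}_{a'}\lambda_c = b\lambda_{c'} + \lambda_{a'c}$ equals $\lambda_{bc'+a'c}\in\Lambda_p$; since $aa'\in(\mathbb{Z}/_3)^*$ and $bb'\in(\mathbb{Z}/_p)^*$, this composite is a member of $\mathcal{A}_{p,3}$. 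Composition of functions is always associative, and the identity function is $[1,1,\lambda_0]\in\mathcal{A}_{p,3}$, so together with the inverse computation above, $\mathcal{A}_{p,3}$ is a subgroup of the symmetric group on $\mathbb{Z}/_3\times\mathbb{Z}/_p$.

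Finally, the order count: a member of $\mathcal{A}_{p,3}$ is determined by the triple $(a,b,\lambda_c)$, and the assignment $[a,b,\lambda_c]\mapsto(a,b,c)$ is a bijection onto $(\mathbb{Z}/_3)^*\times(\mathbb{Z}/_p)^*\times\Lambda_p$ — injectivity because $[a,b,\lambda_c]$ evaluated at $(1,0)$ recovers $a$ (first coordinate) and $\lambda_c(1)=c$ (second coordinate), and at $(0,1)$ recovers $b$; surjectivity is by definition of $\mathcal{A}_{p,3}$. Since $|(\mathbb{Z}/_3)^*| = 2$, $|(\mathbb{Z}/_p)^*| = p-1$, and $\dim_{\mathbb{Z}/_p}\Lambda_p = 1$ so $|\Lambda_p| = p$ by Lemma \ref{Lmm:LPSubSP}, we get $|\mathcal{A}_{p,3}| = 2(p-1)p = 2p(p-1)$, as claimed.

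The only mild obstacle is bookkeeping with the two moduli simultaneously: $a$ lives mod $3$ but appears as a subscript on $\lambda$ where it is reduced mod $p$, so one must be careful that the identities of Lemma \ref{Lmm:LPSubSP} (specifically ${}_a\lambda_b = \lambda_{ab}$ with the convention of Remark \ref{Rmk:ModClasses}) are applied with $a\in\{0,\pm1\}$ throughout. Once that convention is fixed, every step is a routine computation, and I do not anticipate any genuine difficulty.
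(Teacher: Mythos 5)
Your proposal is correct and follows essentially the same route as the paper: verify the identity, compute the composite $[a,b,\lambda_c]\circ[a',b',\lambda_{c'}]=[aa',bb',b\lambda_{c'}+{}_{a'}\lambda_c]$ and the inverse $[a^{-1},b^{-1},{}_{a^{-1}}\lambda_{-b^{-1}c}]$ using Lemma \ref{Lmm:LPSubSP}, and establish the order by showing $(a,b,\lambda_c)\mapsto[a,b,\lambda_c]$ is injective via evaluation at $(1,0)$ and $(0,1)$. The only cosmetic difference is that you make explicit the (correct) observation that this exhibits $\mathcal{A}_{p,3}$ as a subgroup of the symmetric group on $\mathbb{Z}/_3\times\mathbb{Z}/_p$, which the paper leaves implicit.
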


\begin{proof}
    The identity permutation is realized by $[1, 1, \lambda_0]$. If $\varphi=[a, b, \lambda_c]$ and $\psi=[d, e, \lambda_f]$, then $\psi\varphi:(x, y)\mapsto (dax, eby+e\lambda_c(x)\phantom{.}+\phantom{.}_a\lambda_f(x))$. So by Lemma \ref{Lmm:LPSubSP}, $e\lambda_c+ \phantom{.}_a\lambda_{f}=\lambda_{ec}+\phantom{.}_a\lambda_f\in \Lambda_p$ and $\psi\varphi=[da, eb, \lambda_{ec}+\phantom{.}_a\lambda_f] \in\mathcal{A}_{p, 3}$. The inverse of $[a, b, \lambda_c]$ is $[a^{-1}, b^{-1},\phantom{.}_{a^{-1}} \lambda_{-b^{-1}c}]$.

    Establishing the order of $\mathcal{A}_{p, 3}$ is a matter of showing $[*]:(a, b, \lambda_c)\mapsto [a, b, \lambda_c]$ is injective. If $\varphi=[a, b, \lambda_c]=[d, e, \lambda_f]=\psi,$ then $(a, c)=\varphi(1, 0)=\psi(1, 0)=(d, f)$. Moreover, $(0, b)=\varphi(0, 1)=\psi(0, 1)=(0, e)$. This proves $(a, b, \lambda_c)=(d, e, \lambda_f)$, so $[*]$ does inject.
\end{proof}

\begin{theorem}\label{Thm:Bp3Aut}
    Let $B_{p, 3}$ be the nonassociative Bruck loop of order $3p$. Then $\emph{Aut}(B_{p, 3})=\mathcal{A}_{p, 3}$.
\end{theorem}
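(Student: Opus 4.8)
The plan is to prove the two inclusions $\mathcal{A}_{p,3}\subseteq\emph{Aut}(B_{p,3})$ and $\emph{Aut}(B_{p,3})\subseteq\mathcal{A}_{p,3}$ separately. The first is a direct computation with the condensed multiplication table; the second is structural, using that $B_{p,3}$ has a unique subloop of order $p$ and is generated by $(1,0)$ and $(0,1)$.

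For the inclusion $\mathcal{A}_{p,3}\subseteq\emph{Aut}(B_{p,3})$: each $[a,b,\lambda_c]$ is already known to be a bijection by Proposition \ref{Prop:Ap3Group}, so it suffices to verify that it respects $\boxplus$. The first coordinate is immediate, since the first coordinate of $(i,j)\boxplus(k,l)$ is $i+k$ and $a(i+k)=ai+ak$. For the second coordinate, the key observation is that, for fixed $i,k\in\mathbb{Z}/_3$, the second coordinate of $(i,j)\boxplus(k,l)$ is a $\mathbb{Z}/_p$-linear function $f_{i,k}(j,l)$ of $(j,l)$ whose coefficients depend only on $i,k$. Substituting $[a,b,\lambda_c](i,j)=(ai,bj+\lambda_c(i))$ and exploiting this linearity, the homomorphism identity collapses to two finite facts over the nine choices of $(i\bmod 3,k\bmod 3)$: (i) the coefficient table is unchanged under $(i,k)\mapsto(-i,-k)$ (only needed when $a=-1$), and (ii) $f_{i,k}(\lambda_c(i),\lambda_c(k))=\lambda_c(i+k)$, which uses only $\lambda_c(0)=0$, $\lambda_c(\pm1)=\pm c$, and the arithmetic of $\mathbb{Z}/_3$. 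Equivalently, one may first reduce to the generating automorphisms $[-1,1,\lambda_0]$, $[1,b,\lambda_0]$, and $[1,1,\lambda_c]$ of $\mathcal{A}_{p,3}$ (using the composition formula in Proposition \ref{Prop:Ap3Group} together with Lemma \ref{Lmm:LPSubSP}) and check each in turn: the $b$-map because every second-coordinate formula is homogeneous linear in $(j,l)$, the $-1$-map because negating first coordinates leaves the table invariant, and the $\lambda_c$-map by exactly the computation in (ii).

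For the inclusion $\emph{Aut}(B_{p,3})\subseteq\mathcal{A}_{p,3}$: let $\phi\in\emph{Aut}(B_{p,3})$. By Lemma \ref{Lmm:BpqSublps}, $\langle(0,1)\rangle$ is the unique subloop of order $p$, so $\phi$ fixes it setwise and, since $(0,j)\boxplus(0,l)=(0,j+l)$ there, restricts to an automorphism of $(\mathbb{Z}/_p,+)$; hence $\phi(0,y)=(0,by)$ for a fixed $b\in(\mathbb{Z}/_p)^*$. The element $(1,0)$ has order $3$, so $\phi(1,0)$ has order $3$ as well; since every element of order $p$ lies in the unique order-$p$ subloop (power-associativity of Bol loops plus Lemma \ref{Lmm:BpqSublps}), $\phi(1,0)$ has nonzero first coordinate, i.e. $\phi(1,0)=(a,c)$ with $a\in(\mathbb{Z}/_3)^*$ and $c\in\mathbb{Z}/_p$. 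Now pick $[a,b,\lambda_c]\in\mathcal{A}_{p,3}$; one checks $[a,b,\lambda_c](1,0)=(a,c)=\phi(1,0)$ and $[a,b,\lambda_c](0,1)=(0,b)=\phi(0,1)$. By the first inclusion and Proposition \ref{Prop:Ap3Group}, $\psi:=[a,b,\lambda_c]^{-1}\phi$ is an automorphism fixing $(1,0)$ and $(0,1)$. Its fixed-point set is a subloop containing these two elements, hence all of $B_{p,3}$ by Lemma \ref{Lmm:BpqGens}; therefore $\psi=1_{B_{p,3}}$ and $\phi=[a,b,\lambda_c]\in\mathcal{A}_{p,3}$.

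The main obstacle will be organizing the first inclusion so the case analysis stays short: the reduction to linearity of $f_{i,k}$ in $(j,l)$, plus the single identity $f_{i,k}(\lambda_c(i),\lambda_c(k))=\lambda_c(i+k)$, is what makes the nine-case verification tractable, and it is worth isolating this lemma-sized fact before touching the table. The reverse inclusion, by contrast, is entirely structural and invokes only the already-established facts about the subloop lattice and the generating set of $B_{p,3}$.
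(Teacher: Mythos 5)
Your proposal is correct and follows essentially the same route as the paper: both inclusions rest on Lemmas \ref{Lmm:BpqGens} and \ref{Lmm:BpqSublps}, and your reduction of the forward inclusion to the invariance of the coefficients under $(i,k)\mapsto(-i,-k)$ (i.e.\ $\theta_{ax}=\theta_x$) together with the single identity $f_{i,k}(\lambda_c(i),\lambda_c(k))=\lambda_c(i+k)$ is exactly the paper's equation \eqref{Eq:AutEq3} checked over the finitely many $(i,k)$. The only cosmetic difference is in the reverse inclusion, where you compose with $[a,b,\lambda_c]^{-1}$ and use that the fixed-point set of an automorphism is a subloop containing the generators, whereas the paper directly computes $\alpha(m,n)=(0,bn)\boxplus(a,c)^{\boxplus m}$ case by case; both are valid and of comparable length.
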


\begin{proof}
First, we show that all automorphisms of $B_{p, 3}$ are of the form $[a, b, \lambda_c]$. Indeed, let $\alpha\in \text{Aut}(B_{p, 3})$. By Lemma \ref{Lmm:BpqSublps}, $\alpha(0, 1)=(0, b)$ for some $b\in(\mathbb{Z}/_p)^*$. Moreover, $\alpha(1, 0)=(a, c)$ for some $a\in(\mathbb{Z}/_3)^*$ and $c\in \mathbb{Z}/_p$. By Lemma \ref{Lmm:BpqGens}, for $(m, n)\in \mathbb{Z}/_3\times \mathbb{Z}/_p$ we have $\alpha(m, n)=\alpha(0, 1)^{\boxplus n}\boxplus\alpha(0, 1)^{\boxplus m}=(0, bn)\boxplus(a, c)^{\boxplus m}$. Now, if $m=0,$ $(0, bn)\boxplus(a, c)^{\boxplus m}=(0, bn)=(am, bn+\lambda_c(m))$. If $m=1$, $(0, bn)\boxplus(a, c)^{\boxplus m}=(a,bn+ c)=(am, bn+\lambda_c(m))$. If $m=-1$, $(0, bn)\boxplus(a, c)^{\boxplus m}=(-a, bn-c)=(am, bn+\lambda_c(m)).$ Either way, $\alpha(m, n)=[a, b, \lambda_c](m, n)$.

Now, we show each $\alpha=[a, b, \lambda_c]\in \text{Aut}(B_{p, 3})$. Fix $(i, j), (k, l)\in \mathbb{Z}/_3\times \mathbb{Z}/_p$. Then
\begin{equation}\label{Eq:AutEq1}
   \alpha((i, j)\boxplus(k, l))=\left(a(i+k), \frac{\theta_k+\theta_{i+k}}{\theta_k+\theta_k\theta_i}bj+\frac{\theta_{i+k}}{\theta_k}bl+\lambda_c(i+k)\right),
\end{equation}
while
\begin{equation}\label{Eq:AutEq2}
    \alpha(i, j)\boxplus\alpha(k, l)=\left(ai+ak, \frac{\theta_{ak}+\theta_{a(i+k)}}{\theta_{ak}+\theta_{ak}\theta_{ai}}(bj+\lambda_c(i))+\frac{\theta_{a(i+k)}}{\theta_{ak}}(bl+\lambda_c(k))\right).
\end{equation}
Note that $\theta_{ax}=\theta_x$ for all $a, x\in \mathbb{Z}/_3$. Therefore, equality of \eqref{Eq:AutEq1} and \eqref{Eq:AutEq2} comes down to whether or not
\begin{equation}\label{Eq:AutEq3}
    \lambda_c(i+k)=\frac{\theta_{k}+\theta_{i+k}}{\theta_k+\theta_k\theta_i}\lambda_c(i)+\frac{\theta_{i+k}}{\theta_k}\lambda_c(k).
\end{equation}
Using the condensed multiplication table for $B_{p, 3}$ above, one may check the validity of \eqref{Eq:AutEq3} for each possible $i$-$k$ pairing. For instance, if $i=k=1$, then $\frac{\theta_1+\theta_{-1}}{\theta_1+\theta_1\theta_1}\lambda_c(1)+\frac{\theta_{-1}}{\theta_1}\lambda_c(1)=-2\lambda_c(1)+\lambda_c(1)=-2c+c=-c=\lambda_c(-1)=\lambda_c(1+1)$.
\end{proof}

\begin{proposition}\label{Prop:Bp3Invol}
    The involutions in $\emph{Aut}(B_{p, 3})$ belong to four conjugacy classes. They are
    \begin{enumerate}
        \item the singleton containing the identity, $\{[1, 1, \lambda_0]\}$,
        \item the singleton containing inversion with respect to \eqref{Eq:Bpq}, $\{[-1, -1, \lambda_0]\}$,
        \item $\{[1, -1, \lambda_c]: c\in \mathbb{Z}/_p\}$, and
        \item $\{[-1, 1, \lambda_c]: c\in \mathbb{Z}/_p\}$.
    \end{enumerate}
\end{proposition}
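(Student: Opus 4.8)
The plan is to work entirely in the group $\mathcal{A}_{p,3} = \operatorname{Aut}(B_{p,3})$ described in Proposition~\ref{Prop:Ap3Group} and Theorem~\ref{Thm:Bp3Aut}, using the composition law $\psi\varphi = [da, eb, \lambda_{ec} + {}_a\lambda_f]$ from the proof of Proposition~\ref{Prop:Ap3Group}. First I would identify which elements $[a,b,\lambda_c]$ are involutions: compute $[a,b,\lambda_c]^2 = [a^2, b^2, \lambda_{bc} + {}_a\lambda_c]$. Since $a\in(\mathbb{Z}/3)^*$ forces $a^2 = 1$ automatically, and since $b\in(\mathbb{Z}/p)^*$ the condition $b^2 = 1$ means $b = \pm 1$. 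The translational component $\lambda_{bc} + {}_a\lambda_c$ must equal $\lambda_0$; using Remark~\ref{Rmk:ModClasses} to replace ${}_a\lambda_c$ by $\lambda_{ac}$, this reads $\lambda_{bc + ac} = \lambda_0$, i.e. $(a+b)c = 0$ in $\mathbb{Z}/p$. So the involutions are exactly: $[1,1,\lambda_0]$ (the identity, when $a=b=1$ and necessarily $c=0$); $[-1,-1,\lambda_0]$ (when $a=b=-1$, again $c=0$ forced since $a+b=-2\neq 0$); $[1,-1,\lambda_c]$ for any $c$ (since $a+b=0$); and $[-1,1,\lambda_c]$ for any $c$ (since $a+b=0$). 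This already exhibits the four families; what remains is to show each family is a single conjugacy class and that the four are distinct.

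Distinctness of the four families is immediate from the injectivity of $[*]$ (proof of Proposition~\ref{Prop:Ap3Group}) since the $(a,b)$-pairs $(1,1), (-1,-1), (1,-1), (-1,1)$ are all different. Families (1) and (2) are singletons, hence trivially conjugacy classes (the identity is always central; $[-1,-1,\lambda_0]$ is inversion, which is central in the automorphism group of any commutative loop, but in any case it is the unique involution with its $(a,b)$-signature). For families (3) and (4), I would conjugate a representative $[1,-1,\lambda_c]$ (resp.\ $[-1,1,\lambda_c]$) by an arbitrary $\varphi = [d,e,\lambda_f]$ and show the $c$-parameter can be sent to any desired value while $(a,b)$ is preserved. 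Concretely, $\varphi [a,b,\lambda_c] \varphi^{-1}$ has first two coordinates $(dad^{-1}, ebe^{-1}) = (a,b)$ since $\mathbb{Z}/3$ and $\mathbb{Z}/p$ are abelian, so conjugation stays within the family; and a direct computation of the third coordinate (again folding ${}_a\lambda$ notation into subscript arithmetic via Remark~\ref{Rmk:ModClasses}) should yield something of the form $\lambda_{g(d,e,f,c)}$ where, for fixed $(a,b)$ in family (3) or (4), the map $c \mapsto g$ is surjective onto $\mathbb{Z}/p$ — in fact I expect $g$ to depend on $c$ through multiplication by a unit (coming from $e$ and $d$) plus a term linear in $f$, so that even holding $c$ fixed and varying $f$ sweeps out all of $\mathbb{Z}/p$.

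The main obstacle is the bookkeeping in the conjugation computation for families (3) and (4): one must carefully track how ${}_a\lambda_c$ transforms under pre- and post-composition, and verify that the resulting coefficient of $\lambda$ genuinely ranges over all of $\mathbb{Z}/p$ rather than, say, only over a coset or a subgroup. I would handle this by conjugating specifically by elements of the form $[1,1,\lambda_f]$ first (pure translations): these fix $(a,b)$, and I expect $[1,1,\lambda_f]\,[a,-a,\lambda_c]\,[1,1,\lambda_{-f}]$ to shift $c$ by a nonzero multiple of $f$, which alone already gives transitivity on $c$ within each family. The remaining check — that conjugation by the full group never escapes a family — follows from the abelianness observation above. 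Assembling these pieces gives exactly the four conjugacy classes claimed.
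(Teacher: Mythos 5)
Your proposal is correct and follows essentially the same route as the paper: identify the involutions from the composition law of $\mathcal{A}_{p,3}$, note that conjugation preserves the signature $(a,b)$ because the first two coordinates compose abelianly (so the four families cannot merge and the two singletons are automatically classes), and then exhibit explicit conjugators achieving transitivity on the $\lambda_c$-parameter within families (3) and (4). The only harmless differences are that you detect involutions by squaring (getting the condition $(a+b)c=0$) rather than by equating $[a,b,\lambda_c]$ with its stated inverse, and you conjugate by the translations $[1,1,\lambda_f]$, which send $c\mapsto c+(a-b)f$ with $a-b=\pm 2$ a unit, whereas the paper conjugates $[\pm 1,\mp 1,\lambda_0]$ by explicit elements of the opposite family.
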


\begin{proof}
    First, we show that (1)-(4) account for all involutions. Suppose $[a, b, \lambda_c]=[a, b, \lambda_c]^{-1}=[a^{-1}, b^{-1}, \phantom{.}_{a^{-1}}\lambda_{-b^{-1}c}]$. Then $a=\pm 1\mod 3$ and $b=\pm1\mod p$. Assuming either $a=1\mod 3$ and $b=1\mod p$ or $a=-1\mod 3$ and $b=-1\mod p$ both lead to $\lambda_c=\lambda_{-c}$, which forces $c=0\mod p$. This accounts for (1) and (2). Now, we confirm that any choice of $c\in \mathbb{Z}/_p$ makes $[1, -1, \lambda_c]$ and $[-1, 1, \lambda_c]$ involutions. Note
    \begin{equation*}
        \xymatrix{
        (x, y)\ar@{|->}[r]^-{[1, -1, \lambda_c]}& (x, -y+\lambda_c(x))\ar@{|->}[r]^-{[1, -1, \lambda_c]} & (x, y-\lambda_c(x)+\lambda_c(x))
        }=(x, y)
    \end{equation*}
    and
    \begin{equation*}
        \xymatrix{
        (x, y)\ar@{|->}[r]^-{[-1, 1, \lambda_c]}& (-x, y+\lambda_c(x))\ar@{|->}[r]^-{[-1, 1, \lambda_c]} & (x, y+\lambda_c(x)+\lambda_c(-x))
        }=(x, y).
    \end{equation*}

    Second, we prove that (1)-(4) are separate conjugacy classes. Since $[1, 1, \lambda_0]$ and $[-1, -1, \lambda_0]$ are central in $\text{Aut}(B_{p, 3})$, they belong to singleton conjugacy classes. Due to commutativity of composition in the first two coordinates for $\mathcal{A}_{p, 3}$-elements, no $[1, -1, \lambda_c]$ and $[-1, 1, \lambda_d]$ can belong to the same conjugacy class. Thus, the proposition is proven if we can show that each $[1, -1, \lambda_c]$ is conjugate $[1, -1, \lambda_0]$ and each $[-1, 1, \lambda_c]$ is conjugate $[-1, 1, \lambda_0]$. Indeed, $[-1, 1, \lambda_{-c/2}][1, -1, \lambda_0][-1, 1, \lambda_{-c/2}]^{-1}=[-1, 1, \lambda_{-c/2}][1, -1, \lambda_0][-1, 1, \lambda_{-c/2}]=[-1, -1, \lambda_{-c/2}][-1, 1, \lambda_{-c/2}]=[1, -1, \lambda_{c/2}+\phantom{.}_{-1}\lambda_{-c/2}]=[1, -1, \lambda_c]$. Likewise,
    $[1, -1, \lambda_{-c/2}][-1, 1, \lambda_0][1, -1, \lambda_{-c/2}]^{-1}=[-1, -1, \lambda_{c/2}][1, -1, \lambda_{-c/2}]=[-1, 1, \lambda_{c/2}+\lambda_{c/2}]=[-1, 1, \lambda_c]$.
\end{proof}

\begin{theorem}\label{Thm:3pLBDS}
    Let $p>3$ be prime. There are $6$ isomorphism classes of LBDS of order $3p$. These classes have the following representatives:
    \begin{itemize}
        \item[(a)] $(\mathbb{Z}/_{3p}, +, 0, \pm1)$;
        \item[(b)] $(B_{p, 3}, \boxplus, (0, 0), [\pm 1, \pm 1, \lambda_0])$; and
        \item[(c)]  $(B_{p, 3}, \boxplus, (0, 0), [\pm1, \mp 1, \lambda_0])$.
    \end{itemize}
\end{theorem}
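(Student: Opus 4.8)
The plan is to obtain Theorem \ref{Thm:3pLBDS} by feeding a classification of uniquely $2$-divisible Bruck loops of order $3p$ into the isomorphism criterion of Theorem \ref{Thm:LBDSIsom}. By Theorems \ref{Thm:LBDSCharacterization} and \ref{Thm:LBDSIsom}, an LBDS of order $3p$ is a datum $(Q,+,e,S)$ with $(Q,+,e)$ a uniquely $2$-divisible Bruck loop of order $3p$ and $S$ an involutory automorphism, and two such are isomorphic precisely when some loop isomorphism carries one involution to the other. Hence the isomorphism classes are indexed by pairs consisting of an isomorphism type of uniquely $2$-divisible Bruck loop $Q$ of order $3p$ together with a conjugacy class of order-$\leq 2$ elements of $\text{Aut}(Q)$; in particular, data attached to non-isomorphic loops never give isomorphic LBDS.

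First I would pin down the base loops. If $(Q,+,e)$ is associative, then it is a group satisfying \eqref{Eq:AIP}, hence abelian, hence $(\mathbb{Z}/_{3p},+,0)$ (the unique abelian group of order $3p$), which is uniquely $2$-divisible since $3p$ is odd. If $(Q,+,e)$ is nonassociative, then since $3\mid p^2-1$ automatically (a prime $p>3$ is coprime to $3$), Example \ref{Ex:Bpq} (i.e.\ \cite{KinyonNagy}) identifies $Q$ with $B_{p,3}$, the unique nonassociative Bruck loop of order $3p$; that $\mathbb{Z}/_{3p}$ and $B_{p,3}$ exhaust the Bruck loops of order $3p$ is part of the structure theory of Bol loops of order $pq$ (cf.\ \cite{NR}, \cite{Burn}). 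So exactly two base loops occur.

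Next I would read off the conjugacy classes of involutions in each case. For $(\mathbb{Z}/_{3p},+,0)$ the automorphism group $(\mathbb{Z}/_{3p})^*$ is abelian, so conjugacy classes of involutions are singletons, indexed by the solutions of $a^2=1$ there; this yields the representatives $(\mathbb{Z}/_{3p},+,0,\pm 1)$ of part (a). For $B_{p,3}$ the work is already done upstream: Theorem \ref{Thm:Bp3Aut} identifies $\text{Aut}(B_{p,3})$ with $\mathcal{A}_{p, 3}$, and Proposition \ref{Prop:Bp3Invol} partitions its involutions into the four conjugacy classes represented by $[1,1,\lambda_0]$, $[-1,-1,\lambda_0]$, $[1,-1,\lambda_0]$, $[-1,1,\lambda_0]$ --- exactly the four representatives grouped as parts (b) and (c). Assembling the $\mathbb{Z}/_{3p}$ and $B_{p,3}$ contributions, which are disjoint by the first paragraph, produces the stated list.

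The conceptually substantial steps --- determining $\text{Aut}(B_{p,3})$ and reducing its involutions to four conjugacy classes --- are precisely Theorem \ref{Thm:Bp3Aut} and Proposition \ref{Prop:Bp3Invol}, so relative to those this theorem is essentially bookkeeping. The two places I would be most careful are: confirming that the list of Bruck loops of order $3p$ is genuinely complete (this leans on external classification results for Bol loops of order $pq$), and carrying out the elementary but easily-miscounted enumeration of the order-$\leq 2$ elements of $\text{Aut}(\mathbb{Z}/_{3p})$ in the associative case, so that the totals in (a) and in the theorem agree.
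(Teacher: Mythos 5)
Your proposal follows the paper's route exactly: Theorem \ref{Thm:LBDSIsom} reduces the classification to (i) listing the uniquely $2$-divisible Bruck loops of order $3p$, which are $\mathbb{Z}/_{3p}$ and $B_{p,3}$, and (ii) counting conjugacy classes of involutions in each automorphism group, with the nonassociative case delegated to Theorem \ref{Thm:Bp3Aut} and Proposition \ref{Prop:Bp3Invol}. The nonassociative half of your argument is sound and matches the paper.

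However, the step you yourself flagged as ``easily miscounted'' is in fact miscounted --- in your write-up and in the paper alike. For $p>3$ the group $\mathrm{Aut}(\mathbb{Z}/_{3p})\cong(\mathbb{Z}/_{3p})^*\cong(\mathbb{Z}/_3)^*\times(\mathbb{Z}/_p)^*$ is \emph{not} cyclic, and $a^2=1$ has \emph{four} solutions there: the residues with $a\equiv\pm1\pmod 3$ and $a\equiv\pm1\pmod p$ chosen independently (for $p=5$ these are $1,14,4,11$ in $(\mathbb{Z}/_{15})^*$; note $4^2=11^2\equiv 1\pmod{15}$). Since this automorphism group is abelian, each of the four involutions is its own conjugacy class, so by Theorem \ref{Thm:LBDSIsom} there are four pairwise non-isomorphic LBDS over $\mathbb{Z}/_{3p}$, not two; the omitted ones are the product solutions $(\mathbb{Z}/_3,+,0,1)\times(\mathbb{Z}/_p,+,0,-1)$ and $(\mathbb{Z}/_3,+,0,-1)\times(\mathbb{Z}/_p,+,0,1)$. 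Consequently part (a) should list four representatives and the correct total is $8$ rather than $6$. Your sentence ``indexed by the solutions of $a^2=1$ there; this yields the representatives $(\mathbb{Z}/_{3p},+,0,\pm 1)$'' is precisely where the gap sits; the paper's own assertion ``So once, again, $S=\pm1$'' makes the same error (the argument that worked for orders $p$ and $p^2$ relied on $(\mathbb{Z}/_{p})^*$ and $(\mathbb{Z}/_{p^2})^*$ being cyclic, which fails for $3p$). Everything else in your proposal is correct.
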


\begin{remark}\label{Rmk:3pSqMapAnalysis}
    So far, all examples of LBDS constructed have been split extensions involving solutions of type \eqref{Eq:GenofSmSoln} and \eqref{Eq:GenofDerSoln}. However, Theorem \ref{Thm:3pLBDS} provides our first class of examples of LBDS which do not satisfy the hypothesis of Theorem \ref{Thm:LBDSSplitExt}. Take $(B_{p, 3}, \boxplus, (0, 0), [-1, 1, \lambda_0])$. Here, the normal substructure involves a solution upon which $S$ acts by $1$, not $-1$ (cf. the contrapositive of Corollary \ref{Cor:LBDSSPlitExt}). To be sure, for the solution $(B_{p, 3}, \boxplus, (0, 0), [-1, 1, \lambda_0])$, $\text{Sq}_{/^\circ}((i, j))=(0, j)$. This is not a loop endomorphism, for as long as $l\neq0\in \mathbb{Z}/_p$, we have $\text{Sq}_{/^\circ}((1, j)\boxplus(0, l))=(0, j-2l)\neq(0, j+l)=\text{Sq}_{/^\circ}((1, j))\boxplus\text{Sq}_{/^\circ}((0, l))$.
\end{remark}

\subsubsection{Enumerating classes of orders 27 and 81}\label{SubSubSec:Order27_81}
The only orders at which nonassociative Bruck loops are completely accounted for in \texttt{GAP} are $27$ and $81$. Thus, we use the \texttt{LOOPS} package \cite{LOOPS} to enumerate isomorphism classes of LBDS of these orders. By Theorem \ref{Thm:LBDSIsom}, this amounts to counting conjugacy classes of involutions in the automorphism group.

There are $7$ Bruck loops of order 27. In Table \ref{Table:BL_of_order27}, each row corresponds to an order-27 Bruck loop. The first column shows how each loop can be instantiated in \texttt{GAP}, by calling \texttt{LeftBruckLoop(27, i)}. The second column gives isomorphism-invariant information regarding the loop, and in the third column, $n_{\text{CI}}$ stands for the number of conjugacy classes of involutions in the automorphism group of the loop.
\begin{table}[hbt]
\begin{center}
    \begin{tabular}{|c|c|c|}
        \hline
        \texttt{GAP} ID & loop structure & $n_{\text{CI}}$  \\
        \hline
         27/1& $(\mathbb{Z}/_3)^3$ & 4\\
         \hline
         27/2 & $\mathbb{Z}/_3\times\mathbb{Z}/_9$ & 4\\
         \hline
         27/3 &$14$ order-3 els. & 4  \\
         \hline
         27/4 & $20$ order-3 els.  & 4 \\
         \hline
         27/5 & $2$ order-3 els. & 2 \\
         \hline
         27/6 & $8$ order-3 els. & 4 \\
         \hline
         27/7 & $\mathbb{Z}/_{27}$ & 2 \\
         \hline
    \end{tabular}
\end{center}
\vskip2mm
\caption{Enumeration of LBDS of order 27.}\label{Table:BL_of_order27}
\end{table}

Table \ref{Table:BL_of_order81} counts the LBDS coming from each of the 72 Bruck loops of order $81$. At this order, there are no invariants that can succinctly differentiate all 72 loops, hence the lack of a ``loop structure" column.
\begin{table}[hbt]
\begin{center}
    \begin{tabular}{|c|c||c|c||c|c||c|c|}
        \hline
        \texttt{GAP} ID & $n_{\text{CI}}$ & \texttt{GAP} ID & $n_{\text{CI}}$ & \texttt{GAP} ID & $n_{\text{CI}}$ & \texttt{GAP} ID & $n_{\text{CI}}$ \\
        \hline
        81/1 & 5 & 81/19 & 2  & 81/37 & 4 & 81/55 & 4 \\
        \hline
        81/2 & 6 & 81/20 & 2 & 81/38 & 4 & 81/56 & 4 \\
        \hline
        81/3 & 8 & 81/21 & 4 & 81/39 & 2 & 81/57 & 2 \\
        \hline
        81/4 & 8 & 81/22 & 2 & 81/40 & 2 & 81/58 & 4  \\
        \hline
        81/5 & 4 & 81/23 & 4 & 81/41 & 2 & 81/59 & 4 \\
        \hline
        81/6 & 8 & 81/24 & 2 & 81/42 & 2 & 81/60 & 4 \\
        \hline
        81/7 & 8 & 81/25 & 4 & 81/43 & 2 & 81/61 & 2 \\
        \hline
        81/8 & 8 & 81/26 & 4 & 81/44 & 2 & 81/62 & 2 \\
        \hline
        81/9 & 2 & 81/26 & 4 & 81/45 & 2 & 81/63 & 4 \\
        \hline
        81/10 & 4 & 81/28 & 4 & 81/46 & 4 & 81/64 & 4 \\
        \hline
        81/11 & 4 & 81/29 & 4 & 81/47 & 4 & 81/65 & 2 \\
        \hline
        81/12 & 4 & 81/30 & 4 & 81/48 & 4 & 81/66 & 4 \\
        \hline
        81/13 & 2  & 81/31 & 8 & 81/49 & 4 & 81/67 & 4 \\
        \hline
        81/14 & 2 & 81/32 & 4 & 81/50 & 2 & 81/68 & 2 \\
        \hline
        81/15 & 2 & 81/33 & 4 & 81/51 & 3 & 81/69 & 2 \\
        \hline
        81/16 & 2 & 81/34 & 4 & 81/52 & 4 & 81/70 & 2 \\
        \hline
        81/17 & 4 & 81/35 & 4 & 81/53 & 4 & 81/71 & 3  \\
        \hline
        81/18 & 2 & 81/36 & 6 & 81/54 & 4 & 81/72 & 2 \\
        \hline
    \end{tabular}
\end{center}
\vskip2mm
\caption{Enumeration of LBDS of order 81.}\label{Table:BL_of_order81}
\end{table}

Our results are summarized by

\begin{theorem}\label{Thm:2781}
    Up to isomorphism, there are $24$ LBDS of order 27 and $263$ LBDS of order 81.
\end{theorem}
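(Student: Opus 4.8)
The plan is to obtain the theorem as an immediate consequence of Theorem~\ref{Thm:LBDSIsom} together with the complete enumeration of Bruck loops of orders $27$ and $81$ encoded in the \texttt{LOOPS} package. By Theorem~\ref{Thm:LBDSIsom}, the LBDS-quasigroups $x\circ y = 2x-y^S$ and $x*y = 2x\boxminus y^T$, built over uniquely $2$-divisible Bruck loops $(Q,+,e)$ and $(P,\boxplus,f)$ respectively, are isomorphic if and only if there is a Bruck loop isomorphism $\varphi\colon Q\to P$ with $\varphi S = T\varphi$. In particular, the underlying Bruck loop of an LBDS is an isomorphism invariant, and for a fixed Bruck loop $(Q,+,e)$ the LBDS it supports are, up to isomorphism, in bijection with the orbits of the conjugation action of $\mathrm{Aut}(Q,+,e)$ on its set of elements of order dividing $2$; that is, with the conjugacy classes of involutions (the identity included) in $\mathrm{Aut}(Q,+,e)$. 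Hence the number of isomorphism classes of LBDS of order $n$ equals $\sum n_{\text{CI}}$, the sum ranging over the isomorphism classes of uniquely $2$-divisible Bruck loops of order $n$ (which, at odd orders, are exactly the Bruck loops of that order).

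First I would invoke the fact recalled at the start of Section~\ref{SubSubSec:Order27_81}: $27$ and $81$ are the only orders at which the nonassociative Bruck loops have been fully classified, and the \texttt{LOOPS} package realizes all $7$ Bruck loops of order $27$ and all $72$ of order $81$ through \texttt{LeftBruckLoop(n, i)}. Next, for each such loop $Q$ one computes $\mathrm{Aut}(Q)$ in \texttt{GAP}, extracts the elements of order dividing $2$, and counts the $\mathrm{Aut}(Q)$-conjugacy classes among them; this produces the column $n_{\text{CI}}$ of Tables~\ref{Table:BL_of_order27} and~\ref{Table:BL_of_order81}. Summing the seven entries of Table~\ref{Table:BL_of_order27} gives $4+4+4+4+2+4+2 = 24$, and summing the seventy-two entries of Table~\ref{Table:BL_of_order81} gives $263$; by the previous paragraph these are precisely the numbers of isomorphism classes of LBDS at orders $27$ and $81$.

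The one real obstacle here is a matter of trust and bookkeeping rather than mathematics: one must rely on the completeness and irredundancy of the \texttt{LOOPS} library at orders $27$ and $81$ (this is exactly what confines the statement to these two orders) and on the correctness of the \texttt{GAP} computation of automorphism groups and of conjugacy classes of involutions across all $79$ loops. As a partial internal sanity check one can recover the associative cases by hand. For instance, for $(\mathbb{Z}/3)^3$ the involutions of $\mathrm{GL}_3(\mathbb{Z}/3)$ fall into the four classes determined by rational canonical form, namely $1\oplus1\oplus1$, $1\oplus1\oplus(-1)$, $1\oplus(-1)\oplus(-1)$, and $(-1)\oplus(-1)\oplus(-1)$, matching $n_{\text{CI}}=4$ for \texttt{27/1}; for $\mathbb{Z}/27$ the automorphism group $(\mathbb{Z}/27)^{*}$ is cyclic with unique involution $-1$, giving $n_{\text{CI}}=2$ for \texttt{27/7}, consistent with the order-$p^2$ analysis behind Theorem~\ref{Thm:LBDSpp2}. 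Analogous rational-canonical-form counts can be used to spot-check several of the abelian loops of order $81$, lending confidence to the machine computation that yields the figure $263$.
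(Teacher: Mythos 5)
Your proposal is correct and follows essentially the same route as the paper: reduce via Theorem~\ref{Thm:LBDSIsom} to counting conjugacy classes of elements of order dividing $2$ in the automorphism groups of the Bruck loops supplied by the \texttt{LOOPS} package, and sum the $n_{\text{CI}}$ columns of Tables~\ref{Table:BL_of_order27} and~\ref{Table:BL_of_order81} to get $24$ and $263$. Your hand-verification of the associative cases is a reasonable added sanity check but does not change the argument.
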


\section{Latin braided triality sets}\label{Sec:LBTS}

\subsection{Latin braided triality sets}\label{SubSec:LBTS}
We will use LBTS as an acronym for Latin, braided triality set.

\begin{lemma}\label{Lmm:Exp3Bruck}
    A Bruck loop of exponent $3$ is a $\CMLtThm$.
\end{lemma}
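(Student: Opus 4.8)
The plan is to establish commutativity of $Q$ first, and then to observe that a commutative left Bol loop is automatically Moufang because under commutativity the Bol law \eqref{Eq:LBolLaw} collapses to the Moufang law \eqref{Eq:MoufangLaw}; since $Q$ retains exponent $3$, it is then a $\CMLtThm$.

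\emph{Commutativity.} Since $2$ acts invertibly on $\mathbb{Z}/_3$, the squaring map $x\mapsto 2x=x\cdot x$ of $Q$ is an involution (using $x^3=e$ and power-associativity of Bol loops), so $Q$ is uniquely $2$-divisible; in particular the Bruck identity \eqref{Eq:BruckLaw1} is available, which in multiplicative notation reads $(xy)^2=x(y^2x)$. On the other hand, exponent $3$ gives $u^{-1}=u^2$ for every $u\in Q$, so the automorphic inverse property yields $(xy)^2=(xy)^{-1}=x^{-1}y^{-1}=x^2y^2$. Equating the two expressions, $x^2y^2=x(y^2x)$ for all $x,y\in Q$. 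Because squaring is onto $Q$, this is the same statement as $x^2w=x(wx)$ for all $x,w\in Q$; comparing with $x^2w=x(xw)$, which holds by power-associativity (cf. \eqref{Eq:PowerAssoc}), and cancelling $x$ on the left, we obtain $xw=wx$.

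\emph{Moufang.} Being a Bruck loop, $Q$ satisfies the left Bol law \eqref{Eq:LBolLaw}, $x(y(xz))=(x(yx))z$. Commutativity gives $x(yx)=x(xy)=(xy)x$, so the right-hand side is $((xy)x)z$, and \eqref{Eq:LBolLaw} becomes exactly \eqref{Eq:MoufangLaw}. Hence $Q$ is a commutative Moufang loop, and as it has exponent $3$ it is a $\CMLtThm$.

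I expect the only step with genuine content to be establishing commutativity, and within it the two observations that (i) exponent $3$ together with the automorphic inverse property forces $(xy)^2=x^2y^2$, and (ii) in exponent $3$ the squaring map is bijective, so one may legitimately substitute an arbitrary element of $Q$ for $y^2$. Once commutativity is in hand, the passage from the Bol law to the Moufang law is purely formal, as is the observation that exponent $3$ is inherited.
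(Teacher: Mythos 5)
Your proof is correct, and the first half coincides with the paper's argument while the second half takes a genuinely more self-contained route. For commutativity, the paper runs the same computation in additive notation: $x+(x+y)=2x+y=2(x-y)=x+(-2y+x)=x+(y+x)$, using power-associativity, the identity \eqref{Eq:BruckLaw1}, the AIP, and $2u=-u$, then cancels $x$ on the left; your chain $(xy)^2=(xy)^{-1}=x^{-1}y^{-1}=x^2y^2$ together with $(xy)^2=x(y^2x)$ and the substitution $w=y^2$ uses exactly the same four ingredients, just packaged through the surjectivity of squaring rather than through the substitution $y\mapsto -y$ in \eqref{Eq:BruckLaw1}. (Your explicit check that exponent $3$ forces unique $2$-divisibility, so that \eqref{Eq:BruckLaw1} is legitimately available as stated in the paper, is a point the paper leaves implicit.) Where you diverge is the passage to Moufang: the paper invokes Robinson's Theorem 2.7(vi), i.e.\ that a Bol loop satisfying $-(x+y)=-y-x$ is Moufang, whereas you observe directly that commutativity gives $x(yx)=((xy)x)$, so the left Bol law \eqref{Eq:LBolLaw} literally becomes the defining Moufang identity \eqref{Eq:MoufangLaw}. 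Your version buys independence from the external reference at no cost in length; the paper's citation buys nothing here beyond brevity, since the general theorem it quotes is strictly stronger than what is needed once commutativity is known.
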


\begin{proof}
    First, we show that any exponent-$3$ Bruck loop, $Q$, is commutative. By \eqref{Eq:PowerAssoc}, \eqref{Eq:BruckLaw1}, the AIP, and the fact that $2x=-x$, we have $x+(x+y)=2x+y=2(x-y)=x+(-2y+x)=x+(y+x)$; cancel $x$ on the left of the first and last expression, and we have $x+y=y+x$. Therefore, $-(x+y)=-x-y=-y-x$. By Theorem 2.7(vi) in \cite{ROB}, this is enough to conclude that $Q$ is Moufang.
\end{proof}

\begin{theorem}\label{Thm:LBTSEquivConds}
    Let $(Q, r)$ be an LBDS with associated LBDS quasigroup $(Q, \circ, \ld, /^\circ)$ and associated Bruck loop $(Q, +, e)$. The following are equivalent:
    \begin{itemize}
        \item[(a)] $(Q, r)$ is an LBTS;
        \item[(b)] $(Q, +, e)$ is a $\CMLtThm$;
        \item[(c)] $(Q, \circ, \ld, /^\circ)=(Q, \circ, \ld, \circ)$ is a right symmetric quasigroup.
    \end{itemize}
\end{theorem}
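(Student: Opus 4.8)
The plan is to funnel all three conditions through the single assertion that the Bruck loop $(Q,+,e)$ has exponent dividing $3$, and then invoke Lemma~\ref{Lmm:Exp3Bruck} to identify this with (b). Throughout I would use the concrete description furnished by Theorem~\ref{Thm:LBDSCharacterization} and Remark~\ref{Rmk:LBDSLang}: since $(Q,r)$ is an LBDS we have $x\circ y = 2x - y^S$ and $x\bullet y = x^S$, where $S$ is an involutory automorphism of the uniquely $2$-divisible Bruck loop $(Q,+,e)$; the fact that $S$ is a bijection of $Q$ will be used repeatedly so that $x^S$ (resp. $y^S$) ranges over all of $Q$.

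First I would translate (a) and (c) into identities in $(Q,+,e)$. Because $(Q,r)$ is already an LBDS, the dihedral relations \eqref{Eq:Di1}--\eqref{Eq:Di2} hold automatically, so (a) says exactly that the triality relations \eqref{Eq:Tri1}--\eqref{Eq:Tri2} hold; substituting $x\circ y = 2x-y^S$ and $x\bullet y = x^S$ (and using that $\bullet$ ignores its second argument) these become $2(2x - y^S) - x = y^S$ and $2x^S - y = 2y - x^S$, i.e. the universal identities $2(2x - w) - x = w$ and $2v - y = 2y - v$. For (c): a two-sided quasigroup multiplication is right symmetric precisely when each right translation is an involution, which happens exactly when the right division coincides with the multiplication, so the two clauses of (c) are equivalent, and (c) amounts to $(y\circ x)\circ x = y$, i.e. $2(2y - x^S) - x^S = y$, i.e. $2(2y - v) - v = y$ for all $v,y\in Q$. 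Next I would observe that each of these three identities forces $3a = e$ for all $a\in Q$: a suitable specialization of one variable to $e$, together with power-associativity of Bruck loops (so that $2(2a)=4a$, $4a - a = 3a$, and so on), collapses each identity to the exponent-$3$ condition. Conversely, once $(Q,+,e)$ is known to have exponent dividing $3$, Lemma~\ref{Lmm:Exp3Bruck} makes it a $\CMLt$, and then each of the three identities is a one-line computation inside the $2$-generated subloops of the $\CMLt$, which are abelian groups of exponent dividing $3$ (diassociativity plus commutativity): there $2a-b = -(a+b)$, $2(a+b) = a+b$ after negating, and the identities drop out. I would not write these computations out. This produces the chain (a)~$\Leftrightarrow$~[$(Q,+,e)$ of exponent dividing $3$]~$\Leftrightarrow$~(b)~$\Leftrightarrow$~(c).

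There is no substantive obstacle: given the LBDS-to-Bruck-loop dictionary of Section~\ref{Sec:LBDS}, the theorem is essentially a translation exercise. The points that require care are (i) applying the ``$S$ is a bijection'' substitution correctly in each translation, so that the resulting identities are genuinely universal in $v,w,y$ rather than restricted to elements of the form $x^S$; (ii) choosing, for each of the three identities, the right variable to send to $e$ when extracting the exponent-$3$ condition; and (iii) noting that the trivial loop is handled as a degenerate $\CMLt$, so that ``exponent dividing $3$'' and ``$\CMLt$'' coincide across the board. The closest thing to a hard step is simply being disciplined about which triality relation yields which identity, and checking the converse one-line computations do use only diassociativity, commutativity, and exponent $3$.
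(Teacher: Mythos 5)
Your proposal is correct, and it hinges on the same key lemma as the paper (Lemma~\ref{Lmm:Exp3Bruck}: an exponent-$3$ Bruck loop is a $\CMLt$), but the architecture is genuinely different. The paper proves the cycle (a)$\Rightarrow$(b)$\Rightarrow$(c)$\Rightarrow$(a): for (a)$\Rightarrow$(b) it invokes Corollary~\ref{Cor:ExponentBruckLoop} (whose proof is essentially your specialization argument, iterating $r$ on $(e,x)$) together with Lemma~\ref{Lmm:Exp3Bruck}; for (b)$\Rightarrow$(c) it computes $(y\circ x)\circ x=-(-y-x^S)-x^S=y$ using the inverse property; and for (c)$\Rightarrow$(a) it stays entirely on the quasigroup side, deriving \eqref{Eq:Tri1}--\eqref{Eq:Tri2} from right symmetry via the identity $x/^\circ y=y^S/^\circ x^S$ of Lemma~\ref{Lmm:DihedLmm2}. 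You instead translate (a) and (c) into two-variable identities in $(Q,+,e)$ --- your translations check out: \eqref{Eq:Tri1} becomes $2(2x-w)-x=w$, \eqref{Eq:Tri2} becomes $2v-y=2y-v$, and right symmetry becomes $2(2y-v)-v=y$, each collapsing to $3a=e$ upon setting the other variable to $e$ and using power-associativity --- and route everything through the exponent-$3$ condition, a hub-and-spoke argument centered on (b). What your version buys is uniformity, a direct proof of (c)$\Rightarrow$(b) (which the paper only gets by passing through (a)), and independence from Lemma~\ref{Lmm:DihedLmm2} and Corollary~\ref{Cor:ExponentBruckLoop}. What it costs is that the ``one-line computations'' you decline to write out are actually carrying the converse direction, and their justification rests on diassociativity of Moufang loops (Moufang's theorem, so that $2$-generated subloops are abelian groups of exponent $3$), a fact the paper never states and which you should cite explicitly; you also implicitly use that the loop automorphism $S$ commutes with negation when computing $(x\circ y)^S=2x^S-y$, which is fine in an LIP loop but worth a word.
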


\begin{proof}
    $(a)\implies (b)$ By Proposition \ref{Cor:ExponentBruckLoop}, $(Q, +, e)$ has exponent $3$, and so Lemma \ref{Lmm:Exp3Bruck} demands $(Q, +, e)$ is a $\CMLt$.

    $(b)\implies (c)$ Since $(Q, +, e)$ is an IP loop, we have $(y\circ x)\circ x=(-y-x^S)\circ x=-(-y-x^S)-x^S=(y+x^S)-x^S=y.$

    $(c)\implies (a)$ We have to show \eqref{Eq:Tri1} holds by proving
    \begin{equation*}
        (x\circ y)\circ x^S=y^S.
    \end{equation*}

 \noindent Well, in a right-symmetric setting, Lemma \ref{Lmm:DihedLmm2}(c) becomes $x\circ y=y^S\circ x^S$, which implies $(x\circ y)\circ x^S=y^S$. Finally, we must establish \eqref{Eq:Tri2}, which states that
     \begin{equation*}
        (x\circ y)^S=y\circ x.
    \end{equation*}
    Again, Lemma \ref{Lmm:DihedLmm2}(c), ensures this is the case, as $(x\circ y)^S=x^S\circ y^S=x^S/^\circ y^S=y/^\circ x=y\circ x$.
\end{proof}

\begin{corollary}\label{Cor:LBTSQugpIds}
There is a one-to-one correspondence between LBTS and the subvariety of quasigroups generated by
\begin{align*}
    x* yz&=xy* (x\backslash_*x)z\\
    y&=(x\backslash_*x)* xy\\
    y&=yx* x.
\end{align*}
\end{corollary}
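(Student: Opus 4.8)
The plan is to obtain this as a direct combination of Corollary~\ref{Cor:LBDS-quasigroup} and Theorem~\ref{Thm:LBTSEquivConds}. An LBTS is, by definition, an LBDS subject to the extra triality relations, and the associated LBDS-quasigroup already encodes the whole solution, so restricting the LBDS/LBDS-quasigroup correspondence to LBTS should give exactly the subvariety claimed.

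First I would identify the three displayed laws with laws already in play. Using the abbreviation $x^S=x\backslash_*x$ from Section~\ref{SubSec:LFQgps}, the first identity $x* yz=xy* (x\backslash_*x)z$ is verbatim \eqref{Eq:LFNew}; the second identity $y=(x\backslash_*x)* xy$ is verbatim \eqref{Eq:SqinLIP}; and the third identity $y=yx* x$ is the right-symmetric law \eqref{Eq:RS}, which in the presence of the quasigroup axioms is equivalent to $R^*_x$ being involutive for every $x$, hence to $/^*=*$. So a quasigroup lies in the subvariety of Corollary~\ref{Cor:LBTSQugpIds} precisely when it is an LBDS-quasigroup (Definition~\ref{Def:LBDSQu}) that is moreover right symmetric.

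Then I would invoke the correspondence of Corollary~\ref{Cor:LBDS-quasigroup}: by Lemma~\ref{Lmm:DihedLmm1} an LBDS $(Q,r)$ with $r(x,y)=(x\circ y, x\bullet y)$ satisfies $x\bullet y=x\ld x$, so it is reconstructed from its LBDS-quasigroup $(Q,\circ,\ld,/^\circ)$ via $r(x,y)=(x\circ y,\, x\ld x)$, and conversely every LBDS-quasigroup arises this way by Theorem~\ref{Thm:LBDSClass}. Under this bijection, the equivalence $(a)\iff(c)$ of Theorem~\ref{Thm:LBTSEquivConds} says precisely that $(Q,r)$ is an LBTS if and only if $(Q,\circ,\ld,/^\circ)$ is right symmetric. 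Restricting the bijection of Corollary~\ref{Cor:LBDS-quasigroup} accordingly matches LBTS with right-symmetric LBDS-quasigroups, which by the preceding paragraph is exactly the subvariety generated by the three displayed identities.

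There is no real obstacle here; the only work is the bookkeeping above --- matching the three laws with \eqref{Eq:LFNew}, \eqref{Eq:SqinLIP}, and \eqref{Eq:RS}, and checking that the LBDS/LBDS-quasigroup correspondence restricts cleanly to the sub-classes cut out by ``being an LBTS'' on one side and ``being right symmetric'' on the other. Both of these are already contained in Lemma~\ref{Lmm:DihedLmm1}, Theorem~\ref{Thm:LBDSClass}, and Theorem~\ref{Thm:LBTSEquivConds}.
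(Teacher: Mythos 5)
Your proposal is correct and follows exactly the route the paper intends: the corollary is stated without proof precisely because it is the restriction of Corollary~\ref{Cor:LBDS-quasigroup} (whose two defining identities are your first two displayed laws) along the equivalence $(a)\iff(c)$ of Theorem~\ref{Thm:LBTSEquivConds}, which adds the right-symmetric law $yx*x=y$. Your identification of the three identities and the bookkeeping via Lemma~\ref{Lmm:DihedLmm1} and Theorem~\ref{Thm:LBDSClass} is exactly the argument the authors leave implicit.
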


\noindent We shall now refer to quasigroups in the variety described by Corollary \ref{Cor:LBTSQugpIds} as \emph{LBTS-quasigroups}.

\begin{corollary}\label{Cor:LBTSCML}
Let $(Q, r)$ be a non-degenerate mapping with $r(x, y)=(x\circ y, x\bullet y)$, and denote $x\ld x$ exponentially as $x^S$. All of the following are equivalent.
\begin{itemize}
    \item[(a)] The pair $(Q, r)$ is an LBTS.
    \item[(b)]  $(Q, \circ, \ld)$ is in fact a right-symmetric LF-quasigroup $(Q, \circ, \ld, \circ)$  in which $x^S\circ(x\circ y)=y$; moreover, $r(x, y)=(x\circ y, x^S)$.
    \item[(c)] For any $e\in \emph{Sq}_\circ(Q)$, the operation $x+y:=e\circ (x^S\circ y)$ endows a $\CMLtThm$ $(Q, +, e)$ such that $S\in \emph{Aut}(Q, +, e)$; moreover, $r(x, y)=(-x-y^S, x^S)$.
\end{itemize}
\end{corollary}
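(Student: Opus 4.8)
The plan is to exploit the LBDS theory already developed: the key observation is that each of (a), (b), (c) \emph{separately} forces $(Q,r)$ to be an LBDS with $x\bullet y=x^S$, hence $r(x,y)=(x\circ y,x^S)$; once that is in place Theorem~\ref{Thm:LBTSEquivConds} supplies the entire cycle of equivalences, and what remains is to transcribe between the quasigroup-theoretic and loop-theoretic descriptions.

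I would first dispose of (a)$\Leftrightarrow$(b). If $(Q,r)$ is an LBTS then it is in particular an LBDS, so Lemma~\ref{Lmm:DihedLmm1} gives $x\bullet y=x^S$ and $r(x,y)=(x\circ y,x^S)$, Theorem~\ref{Thm:LBDSClass} makes $(Q,\circ,\ld,/^\circ)$ an LF-quasigroup satisfying \eqref{Eq:LBDSIP}, and Theorem~\ref{Thm:LBTSEquivConds} forces $/^\circ=\circ$ (right symmetry); that is precisely (b). Conversely, under (b) the hypotheses of Theorem~\ref{Thm:LBDSClass} are met, so $(Q,r)$ is an LBDS whose associated quasigroup is right symmetric, and Theorem~\ref{Thm:LBTSEquivConds} upgrades it to an LBTS. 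This step is little more than a reformulation of Corollary~\ref{Cor:LBTSQugpIds}.

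Next, (a)$\Leftrightarrow$(c). Assuming (a): fix an idempotent $e$; because $/^\circ=\circ$ we have $\text{Sq}_\circ(Q)=\text{Sq}_{/^\circ}(Q)$, which is the set of idempotents by Proposition~\ref{Prop:LBDSIdemp}, so $e$ runs over exactly $\text{Sq}_\circ(Q)$. Lemma~\ref{Lmm:BruckLoopFromLBDS} produces the uniquely $2$-divisible Bruck loop $x+y=(x/^\circ e)\circ(e\circ y)$ with $S\in\text{Aut}(Q,+,e)$, and Theorem~\ref{Thm:LBTSEquivConds} says it is a $\CMLt$; a short computation with $2x=-x$, the IP/AIP laws and the automorphy of $S$ rewrites this operation as $e\circ(x^S\circ y)$ and recasts $r(x,y)=(2x-y^S,x^S)$ (Theorem~\ref{Thm:LBDSCharacterization}) as $(-x-y^S,x^S)$, which is (c). Conversely, under (c): a $\CMLt$ is a uniquely $2$-divisible Bruck loop, $S\colon x\mapsto x\ld x$ is checked to be an involutory automorphism of it, and $x\circ y=-x-y^S=2x-y^S$, so Theorem~\ref{Thm:LBDSCharacterization} makes $(Q,r)$ an LBDS; Lemma~\ref{Lmm:BruckandBack} identifies the Bruck loop canonically attached to $(Q,\circ,\ld,/^\circ)$ with the $\CMLt$ named in (c), so Theorem~\ref{Thm:LBTSEquivConds} again yields that $(Q,r)$ is an LBTS.

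The main obstacle is the bookkeeping around the symbol $S$ and the two presentations of $+$. One must verify that the left-divisional squaring map $x\mapsto x\ld x$ is indeed the automorphism written $S$ in the formulas of (c) and is an involution --- the mild circularity here is resolved by solving $x\circ z=x$ in the $\CMLt$ --- and one must check the exponent-$3$-dependent identity $e\circ(x^S\circ y)=(x/^\circ e)\circ(e\circ y)$ so that the loop displayed in (c) genuinely coincides with the one produced by Lemma~\ref{Lmm:BruckLoopFromLBDS}. These are short manipulations with the Moufang/IP/AIP identities and $2x=-x$; beyond them the argument is a direct appeal to Theorems~\ref{Thm:LBDSClass}, \ref{Thm:LBDSCharacterization}, \ref{Thm:LBTSEquivConds} together with Lemmas~\ref{Lmm:DihedLmm1}, \ref{Lmm:BruckLoopFromLBDS}, \ref{Lmm:BruckandBack}.
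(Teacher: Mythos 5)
Your proposal is correct and follows essentially the same route as the paper, which states this corollary without a separate proof precisely because it is the combination of Theorem~\ref{Thm:LBTSEquivConds} with the Section~\ref{Sec:LBDS} machinery (Lemma~\ref{Lmm:DihedLmm1}, Theorem~\ref{Thm:LBDSClass}, Theorem~\ref{Thm:LBDSCharacterization}, Lemmas~\ref{Lmm:BruckLoopFromLBDS} and~\ref{Lmm:BruckandBack}) that you invoke. The details you supply --- that $2x=-x$ turns $2x-y^S$ into $-x-y^S$, that right symmetry makes $\mathrm{Sq}_\circ=\mathrm{Sq}_{/^\circ}$ so $e$ ranges over the idempotents, that $e\circ(x^S\circ y)=(x/^\circ e)\circ(e\circ y)$ in the exponent-$3$ setting, and that solving $x\circ z=x$ forces $S^2=1_Q$ in direction (c)$\Rightarrow$(a) --- are exactly the routine verifications the paper leaves implicit.
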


\begin{corollary}\label{Cor:LBTSOrder}
If $(Q, r)$ is a finite LBTS, then there is some $n\geq 0$ such that $|Q|=3^n$.
\end{corollary}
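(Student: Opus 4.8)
The plan is to reduce to the classical fact that a finite commutative Moufang loop of exponent~$3$ has order a power of~$3$, and then to derive that fact from the nilpotency furnished by the Bruck--Slaby theorem.

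First I would apply Theorem~\ref{Thm:LBTSEquivConds} (equivalently, Corollary~\ref{Cor:LBTSCML}): since $(Q, r)$ is an LBTS, its associated Bruck loop $(Q, +, e)$ is a $\CMLtThm$ on the same underlying set $Q$. Hence it suffices to prove that every finite $\CMLtThm$ has order $3^{n}$ for some $n \ge 0$.

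So let $(Q, +, e)$ be a finite $\CMLtThm$. Being finite, $Q$ is finitely generated, so the Bruck--Slaby theorem (\cite[Theorem~10.1]{Bruck58}, recalled in Section~\ref{SubSec:Loops}) shows $Q$ is nilpotent; write its lower central series as $Q = Q_{0} \supseteq Q_{1} \supseteq \cdots \supseteq Q_{c} = \{e\}$, with each $Q_{i+1}$ a normal subloop of $Q$. For $i \ge 1$ the section $Q_{i}/Q_{i+1}$ is, by construction, contained in the center of $Q/Q_{i+1}$, hence is an abelian group; and $Q/Q_{1}$ is an abelian group because $Q_{1}$ is the associator subloop, so that $Q/Q_{1}$ is both associative and commutative. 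Each section inherits exponent dividing $3$ from $Q$, so every $Q_{i}/Q_{i+1}$ is an elementary abelian $3$-group, and it is finite as a section of the finite loop $Q$; thus $|Q_{i}/Q_{i+1}| = 3^{k_{i}}$ for some $k_{i} \ge 0$. Counting cosets along the series (each $Q_{i+1}$ being normal in $Q_{i}$) yields $|Q| = \prod_{i=0}^{c-1} |Q_{i}/Q_{i+1}| = 3^{\,k_{0} + \cdots + k_{c-1}}$, which completes the proof.

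I do not anticipate a genuinely hard step: the essential input is the Bruck--Slaby nilpotency theorem, which is quoted in the excerpt, together with the standard observation (already noted there for the free case) that all lower central factors of a $\CMLtThm$ are elementary abelian $3$-groups. The only point that calls for a little care is checking that $Q/Q_{1}$ and each central section $Q_{i}/Q_{i+1}$ really are \emph{groups} of exponent $3$ rather than merely loops, so that their orders are honestly powers of~$3$; this is immediate from the definitions of the associator subloop and of the center. An alternative route, if one is willing to cite the Cauchy-type theorem for finite Moufang loops, would be to observe directly that a prime $q \ne 3$ dividing $|Q|$ would force an element of order $q$, contradicting exponent~$3$.
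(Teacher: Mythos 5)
Your proof is correct, but it takes a different route from the paper's. The paper disposes of the corollary in two lines by citing the fact that finite commutative Moufang loops satisfy both Lagrange's theorem and Cauchy's theorem on elements of prime order \cite{CommALoops}: a prime $q\neq 3$ dividing $|Q|$ would produce an element of order $q$, contradicting exponent $3$. That is exactly the ``alternative route'' you mention at the end. Your main argument instead derives the statement from the Bruck--Slaby nilpotency theorem: you run down the lower central series, observe that $Q/Q_1$ is an abelian group (being associative and commutative) and that each higher section $Q_i/Q_{i+1}$ is central in $Q/Q_{i+1}$, hence also an abelian group, conclude that every section is a finite elementary abelian $3$-group, and multiply orders along the chain of normal subloops. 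Both arguments are sound. Yours is more self-contained, using only the Bruck--Slaby theorem already quoted in Section~\ref{SubSec:Loops} plus the coset decomposition for normal subloops (which is unproblematic, since normal subloops are kernels of homomorphisms and left translations give bijections between cosets); the paper's is shorter but outsources the entire content to the Lagrange and Cauchy theorems for CMLs. One small bonus of your version: it handles the trivial loop cleanly as $3^0$, whereas the paper's phrasing ``thus $3\mid|Q|$'' silently assumes $Q$ is nontrivial.
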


\begin{proof}
The underlying set must play host to a $\CMLt$-structure.  Now, CMLs adhere to both Lagrange's theorem, and to Cauchy's theorem on elements of prime order \cite{CommALoops}.  Thus, $3\mid |Q|$, and any other prime dividing the order of $Q$ would violate the fact that the exponent of our loop is $3$.
\end{proof}

\subsubsection{Multiplicative squaring, substructures, and extension}

We now turn our attention to $\text{Sq}_\circ$, which in the triality setting, coincides with the map $\text{Sq}_{/^\circ}$ of Section \ref{SubSubSec:SquareLBDS}. We now have a stronger version of Proposition \ref{Prop:IdempSubqugp}:

\begin{theorem}\label{Thm:LBTSFactorization}
Let $(Q, +, e, S)$ be an LBTS with associated quasigroup $(Q, \circ, \ld, /^\circ)$.
\begin{itemize}
    \item[(a)] The set of idempotents of $(Q, \circ, \ld, \circ)$ forms a subquasigroup and a subloop that coincides with the set $\{x\in Q\mid x^S=x\}$.
    \item[(b)] The set $Q_e=\{x\in Q\mid \emph{Sq}_\circ(x)=e\}$ forms a subquasigroup of $(Q, \circ, \ld, \circ)$ and a subloop of $(Q, +, e)$ that coincides with the set $\{x\in Q\mid x^S=-x\}$.
    \item[(c)] We have loop and quasigroup factorizations $Q=\emph{Sq}_\circ(Q)+Q_e=\emph{Sq}_\circ(Q)\circ Q_e$.
\end{itemize}
\end{theorem}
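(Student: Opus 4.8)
The plan is to obtain parts (a) and (b) essentially for free from the general LBDS theory of Section~\ref{SubSec:BDS}, and then to prove the factorization (c) by a ``$\pm 1$-eigenspace'' decomposition of the $\CMLt$ $(Q,+,e)$ relative to $S$, carrying out the usual field-theoretic argument with the Manin identity \eqref{Eq:Manin} substituted for ordinary linear algebra.

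For (a) and (b): by Theorem~\ref{Thm:LBTSEquivConds}(c) an LBTS has $/^\circ=\circ$, and, as recorded just before the theorem, $\text{Sq}_{\circ}=\text{Sq}_{/^\circ}$; so Proposition~\ref{Prop:IdempSubqugp}(a) immediately yields (a) --- the idempotents of $(Q,\circ,\ld,\circ)$ form a subquasigroup and a subloop equal to $Q^+:=\{x\in Q\mid x^S=x\}=\text{Sq}_{\circ}(Q)$ --- and Proposition~\ref{Prop:IdempSubqugp}(b) yields (b), identifying $Q_e$ with $Q^-:=\{x\in Q\mid x^S=-x\}$. I would also note at once that $Q^+\cap Q^-=\{e\}$: if $x^S=x=-x$ then $2x=e$, so $x=e$ by unique $2$-divisibility.

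For (c): I would first simplify $\circ$ on $Q^+\times Q^-$. For $a\in Q^+$ and $b\in Q^-$ one has $a\circ b=-a-b^S=-a+b=b-a$, using $b^S=-b$ and commutativity of the $\CMLt$ (the auxiliary identities $(-x)^S=-x^S$ and $(x^S-x)^S=-(x^S-x)$ used along the way come from the AIP and commutativity). Since $a\mapsto -a$ is a bijection of $Q^+$, the two asserted factorizations are equivalent, and it suffices to prove that the product map $\mu\colon Q^+\times Q^-\to Q$, $(a,b)\mapsto a+b$, is a bijection; the trivial-intersection statement above then makes ``$Q=\text{Sq}_{\circ}(Q)+Q_e$'' an exact factorization. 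For surjectivity of $\mu$, given $q\in Q$ I would take $a:=\text{Sq}_{\circ}(q)=-q-q^S\in Q^+$ (directly, or via Proposition~\ref{Prop:LBDSIdemp}) and $b:=q^S-q\in Q^-$; applying \eqref{Eq:Manin} with $x=-q$, $y=-q^S$, $z=q^S$ collapses $(-q-q^S)+(q^S-q)$ to $2(-q)+e=2(-q)=q$, the last equality because a $\CMLt$ has exponent $3$. For injectivity, from $a_1+b_1=a_2+b_2$ (with $a_i\in Q^+$, $b_i\in Q^-$) I would apply $S$ to get $a_1-b_1=a_2-b_2$, then use \eqref{Eq:Manin} with $x=a_i$, $y=b_i$, $z=-b_i$ to obtain $(a_i+b_i)+(a_i-b_i)=2a_i$; hence $2a_1=2a_2$, so $a_1=a_2$ by unique $2$-divisibility and then $b_1=b_2$ by left cancellation. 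The $\circ$-factorization follows from the $+$-factorization through $a\circ b=b-a$.

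The bookkeeping with inverses in a commutative IP loop is routine and I would perform it silently using the AIP. The one step that is genuinely not automatic --- hence the main obstacle --- is the non-associativity of $+$: one cannot simply ``add and subtract'' the eigenvector equations as over a field. The point to isolate is that \eqref{Eq:Manin} is precisely the needed replacement, since it rewrites $(a+b)+(a-b)$ as $2a+\bigl(b+(-b)\bigr)=2a$; once this is observed, both the surjectivity and injectivity computations go through cleanly.
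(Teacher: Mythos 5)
Your proof is correct and follows essentially the same route as the paper: parts (a) and (b) are quoted from Proposition \ref{Prop:IdempSubqugp}, and the factorization in (c) comes from the Manin identity \eqref{Eq:Manin} applied to write $x=(-x-x^S)+(-x+x^S)$ with the two summands landing in $\mathrm{Sq}_\circ(Q)$ and $Q_e$ respectively. The only additions are your injectivity argument, which establishes the stronger statement that the decomposition is unique (the theorem only asserts the set-product equality, so this is a bonus rather than a necessity), and your derivation of the $\circ$-factorization from the $+$-factorization via $a\circ b=b-a$, where the paper instead verifies $x=((-x)\circ(-x))\circ(x\circ(-x))$ directly.
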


\begin{proof}
   Statements (a) and (b) follow immediately from Proposition \ref{Prop:IdempSubqugp}.\\
    (c) The CML identity \eqref{Eq:Manin} demands
    \begin{align*}
        x&=x+(-x^S+x^S)=(-x-x)+(-x^S+x^S)\\
        &=(-x-x^S)+(-x+x^S)
    \end{align*}
    for all $x\in Q.$ By (a), $-x-x^S\in \text{Sq}_\circ(Q)$, as $(-x-x^S)^S=-x^S-x=-x-x^S$. By (b), $-x+x^S\in Q_e,$ as $(-x+x^S)^S=-x^S+x=x-x^S=-(-x+x^S)$. Thus, $Q=\text{Sq}_\circ(Q)+Q_e$. A routine calculation verifies that $$x=(-x-x^S)+(-x+x^S)=((-x)\circ (-x))\circ(x\circ(-x)),$$ and thus $Q=\text{Sq}_\circ(Q)\circ Q_e$.
\end{proof}

The following are immediate consequences of Theorem \ref{Thm:LBDSSplitExt}.

\begin{theorem}\label{Thm:LBTSSplitExt}
Let $(Q, +, e, S)$ be an LBTS with associated quasigroup $(Q, \circ, \ld, \circ)$. If $\emph{Sq}_{\circ}$ is an endomorphism of the LBTS, then we have a split extension

\begin{equation}\label{Eq:LBTSSPlitExt}
    \{e\}\to Q_e\to Q\to \emph{Sq}_{\circ}(Q)\to \{e\}
\end{equation}
in both the category of LBTS-quasigroups and the category of commutative Moufang loops of exponent $3$.
\end{theorem}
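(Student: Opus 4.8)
The plan is to read this off from Theorem \ref{Thm:LBDSSplitExt}, using two ingredients: that an LBTS is a special LBDS, and that both ``LBTS-quasigroup'' and ``$\CMLt$'' name varieties. First I would recall (as noted just before the statement, and via Theorem \ref{Thm:LBTSEquivConds}) that an LBTS is precisely an LBDS whose associated Bruck loop is a $\CMLt$, and that in the triality setting $/^\circ=\circ$, so the right-divisional squaring map $\text{Sq}_{/^\circ}$ agrees with $\text{Sq}_\circ$. Since $\text{Sq}_{/^\circ}$ fixes $e$ and commutes with $S$, Lemma \ref{Lmm:LBDSEndo} says that ``$\text{Sq}_\circ$ is an endomorphism of the LBTS'' is the same as ``$\text{Sq}_{/^\circ}$ is an endomorphism of $(Q,+,e)$'', which is exactly the hypothesis of Theorem \ref{Thm:LBDSSplitExt}. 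Applying that theorem produces the split extension \eqref{Eq:LBTSSPlitExt} in the categories of LBDS-quasigroups and of Bruck loops; in particular $Q_e$ is a normal subloop, $Q/Q_e\cong\text{Sq}_\circ(Q)$, and the splitting is the inclusion of $\text{Sq}_\circ(Q)$.

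It then remains only to observe that all objects and maps in \eqref{Eq:LBTSSPlitExt} already live in the narrower categories. The class of LBTS-quasigroups is a subvariety of quasigroups by Corollary \ref{Cor:LBTSQugpIds}, and the class of commutative Moufang loops of exponent $3$ is a subvariety of loops --- it is cut out of loops by the commutative-Moufang identity \eqref{Eq:Manin} together with $x^3=e$, equivalently it is the subvariety of Bruck loops of exponent $3$ by Lemma \ref{Lmm:Exp3Bruck}. Varieties are closed under subalgebras and homomorphic images, so since $Q$ is an LBTS-quasigroup (resp.\ a $\CMLt$), so are the subquasigroup/subloop $Q_e$, the subquasigroup/subloop $\text{Sq}_\circ(Q)$, and the quotient $Q/Q_e$. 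The three maps in \eqref{Eq:LBTSSPlitExt} are the very ones supplied by Theorem \ref{Thm:LBDSSplitExt}, hence quasigroup (resp.\ loop) homomorphisms between objects of the relevant variety, hence morphisms in the appropriate category. Finally, to regard \eqref{Eq:LBTSSPlitExt} as a sequence of LBTS one checks that $S$ restricts compatibly: by Theorem \ref{Thm:LBTSFactorization}, $S$ acts as the identity on $\text{Sq}_\circ(Q)=\{x\mid x^S=x\}$ and as inversion on $Q_e=\{x\mid x^S=-x\}$, both of which are automorphisms, and every structure map commutes with $S$.

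Since Theorem \ref{Thm:LBDSSplitExt} already performs all of the extension-theoretic work, there is no substantial obstacle here; the only points needing a moment's attention are the identification $\text{Sq}_\circ=\text{Sq}_{/^\circ}$ together with the translation of the endomorphism hypothesis through Lemma \ref{Lmm:LBDSEndo} --- so that Theorem \ref{Thm:LBDSSplitExt} genuinely applies --- and the remark that varieties are closed under the sub- and quotient constructions used, which is exactly what carries the split extension down into the category of LBTS-quasigroups and of $\CMLt$'s.
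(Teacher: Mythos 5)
Your proposal is correct and follows exactly the paper's route: the paper simply declares Theorem \ref{Thm:LBTSSplitExt} an immediate consequence of Theorem \ref{Thm:LBDSSplitExt}, and you have supplied precisely the bookkeeping (the identification $\text{Sq}_\circ=\text{Sq}_{/^\circ}$ in the right-symmetric setting, the transfer of the endomorphism hypothesis via Lemma \ref{Lmm:LBDSEndo}, and closure of the subvarieties under subalgebras and quotients) that makes that immediacy explicit.
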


\begin{corollary}\label{Cor::LBTSSmplitExt}
    Let $(Q, +, e, S)$ be an LBTS such that $\emph{Sq}_{\circ}$ is endomorphic;
    \begin{itemize}
        \item[(a)] the $\CMLtThm$ automorphism $S$ is upper-triangularizable in the sense that $Q$ is a split extension of a normal subloop upon which $S$ acts as $-1$ with a subloop upon which $S$ acts as $1.$
        \item[(b)] $(Q, +, e, S)$ is a split extension of a solution of type \eqref{Eq:SMith'sSol} with one of type \eqref{Eq:CMLDerSol}.
    \end{itemize}
\end{corollary}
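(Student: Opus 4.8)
The plan is to deduce this directly from the LBDS statement, Corollary \ref{Cor:LBDSSPlitExt}, by invoking the identification of LBTS as precisely those LBDS whose associated Bruck loop is a $\CMLtThm$. First I would recall that, by Theorem \ref{Thm:LBTSEquivConds} (equivalently Corollary \ref{Cor:LBTSCML}), every LBTS is an LBDS for which $(Q, +, e)$ is a $\CMLtThm$; in such a loop exponent $3$ gives $2x = -x$ for all $x$. Under the hypothesis that $\text{Sq}_\circ$ is endomorphic, Theorem \ref{Thm:LBTSSplitExt} already supplies the split extension \eqref{Eq:LBTSSPlitExt} in both the category of LBTS-quasigroups and the category of $\CMLt$'s, so the structural content of (a) and (b) is in hand; what remains is to read off the $S$-action on the two factors and to recognize the two factor solutions.

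For (a), the normal subloop is $Q_e = \{x \in Q \mid \text{Sq}_\circ(x) = e\}$, which by Theorem \ref{Thm:LBTSFactorization}(b) equals $\{x \mid x^S = -x\}$, so $S$ restricts to $-1$ on $Q_e$; the complementary subloop $\text{Sq}_\circ(Q)$ equals $\{x \mid x^S = x\}$ by Theorem \ref{Thm:LBTSFactorization}(a), so $S$ restricts to $1$ there. Normality of $Q_e$ and the fact that the inclusion of $\text{Sq}_\circ(Q)$ splits the extension are exactly the content of Theorem \ref{Thm:LBTSSplitExt} (the kernel/retract picture persisting because $\text{Sq}_\circ$ fixes $\text{Sq}_\circ(Q)$ pointwise, since $x^S=x$ forces $\text{Sq}_\circ(x)=2x-x^S=x$), so (a) follows.

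For (b), by Lemma \ref{Lmm:DihedLmm1} we always have $x \bullet y = x^S$, so it suffices to identify $x \circ y$ on each factor. On $\text{Sq}_\circ(Q)$ we have $x^S = x$ and $x \circ y = 2x - y^S = 2x - y = -x - y$, so the restricted solution is of type \eqref{Eq:CMLDerSol}; on $Q_e$ we have $x^S = -x$ and $x \circ y = 2x - y^S = 2x + y = -x + y$, so the restricted solution is of type \eqref{Eq:SMith'sSol}. Both subsets are genuine sub-LBTS-quasigroups, closed under $\circ$, $\ld$, and $/^\circ = \circ$ (as recorded following \eqref{Eq:Qe} and in Theorem \ref{Thm:LBTSFactorization}), and the splitting of Theorem \ref{Thm:LBTSSplitExt} respects all of the operations, which yields the desired split extension of a \eqref{Eq:SMith'sSol}-type solution by a \eqref{Eq:CMLDerSol}-type solution.

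I do not anticipate a genuine obstacle: the corollary is a specialization of Corollary \ref{Cor:LBDSSPlitExt}, and the only points meriting a sentence of care are that the passage from the generic Bruck-loop forms \eqref{Eq:GenofSmSoln}--\eqref{Eq:GenofDerSoln} to the exponent-$3$ forms \eqref{Eq:SMith'sSol}--\eqref{Eq:CMLDerSol} is exactly the substitution $2x = -x$, and that ``$\text{Sq}_\circ$ endomorphic'' means the same thing in the $\CMLt$ category and in the LBTS-quasigroup category, which is Lemma \ref{Lmm:LBDSEndo} applied in the present setting.
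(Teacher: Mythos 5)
Your proposal is correct and follows the same route the paper takes: the paper offers no separate argument, declaring the corollary an immediate consequence of Theorem \ref{Thm:LBDSSplitExt} (via Theorem \ref{Thm:LBTSSplitExt}), and your write-up supplies exactly the details that make it immediate --- the identifications $\text{Sq}_\circ(Q)=\{x\mid x^S=x\}$ and $Q_e=\{x\mid x^S=-x\}$ from Theorem \ref{Thm:LBTSFactorization}, together with the exponent-$3$ substitution $2x=-x$ turning the generic forms \eqref{Eq:GenofSmSoln}--\eqref{Eq:GenofDerSoln} into \eqref{Eq:SMith'sSol}--\eqref{Eq:CMLDerSol}. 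No gaps.
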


\subsection{LBTS of orders 3, 9, 27, 81}\label{SubSec:SmallOrderLBTS}
There is one nonassociative $\CMLt$ of order $81$ \cite[Thm.~9.2]{KepkaNemec}. It has the following representation in terms of the ring $\mathbb{Z}/_3[\omega]$, where $\omega$ is a primitive third root of unity (cf. \cite[Sec.~5.2]{No22}).

\begin{definition}\label{Def:L3}
    Let $C_3^2=\{b^mc^n\mid m, n\in \mathbb{Z}/_3\}$ denote the rank-2 elementary abelian $3$-group with identity abbreviated to $e$. Denote $L_3= C_3^2\times \mathbb{Z}/_3[\omega]$. Define the operation
    \begin{equation}
        (b^{m_1}c^{n_1}, q+r\omega)\boxplus(b^{m_2}c^{n_2}, s+t\omega)=(b^{m_1+m_2}c^{n_1+n_2}, \omega^\nu(q+r\omega)+\omega^{-\nu}(s+t\omega)),
    \end{equation}
    where $\nu=m_1n_2-n_1m_2$. Then $(L_3, \boxplus, (e, 0))$ is a nonassociative $\CMLt$ of order $81$. Inversion coincides with inversion in the underlying abelian group. Its associative center is given by the set $Z(L_3)=\{(e, a-a\omega)\in L_3\mid a\in \mathbb{Z}/_3\}$.
\end{definition}

\begin{lemma}\label{Lmm:L3LBTS}
The four LBTS $(L_3, \boxplus, (e, 0), S_1), \dots, (L_3, \boxplus, (e, 0), S_4)$, where
    \begin{align*}
        S_1& \text{ is the identity map},\\
        S_2& \text{ is inversion in } L_3,\\
        S_3&:(b^mc^n, q+r\omega)\mapsto(b^mc^n, -q-r\omega), \text{ and}\\
        S_4&:(b^mc^n, r+s\omega)\mapsto (b^{-m}c^{-n}, r+s\omega).
    \end{align*}
    are pairwise non-isomorphic.
\end{lemma}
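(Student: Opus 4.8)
The plan is to use Theorem \ref{Thm:LBDSIsom} (specialized to LBTS via Corollary \ref{Cor:LBTSCML}), which tells us that two LBTS $(L_3, \boxplus, (e,0), S_i)$ and $(L_3, \boxplus, (e,0), S_j)$ are isomorphic if and only if $S_i$ and $S_j$ are conjugate in $\mathrm{Aut}(L_3, \boxplus, (e,0))$. So the entire task reduces to checking that $S_1, S_2, S_3, S_4$ lie in four distinct conjugacy classes of the automorphism group of the nonassociative $\CMLt$ of order $81$. (One should first confirm that each $S_i$ is in fact an involutive automorphism of $L_3$ — this is a short direct verification from the formula in Definition \ref{Def:L3}, noting that $S_2$ is inversion, which is always a $\CMLt$ automorphism, $S_1$ is trivial, and $S_3$, $S_4$ each act independently on one of the two tensor factors $C_3^2$ and $\mathbb{Z}/_3[\omega]$ while respecting the twisting exponent $\nu$; since $\nu$ changes sign under $m_i \mapsto -m_i$ but $\omega^{\pm\nu}$ is unaffected when we simultaneously negate the $\mathbb{Z}/_3[\omega]$-coordinates, both checks go through.)

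To separate the conjugacy classes I would use conjugation-invariant data attached to each involution $S$. The cleanest invariant is the pair of subloops $\mathrm{Fix}(S) = \{x : x^S = x\}$ and $\mathrm{Neg}(S) = \{x : x^S = -x\}$, together with their orders; by Proposition \ref{Prop:IdempSubqugp} (or Theorem \ref{Thm:LBTSFactorization}(a),(b)) these are genuine subloops of $L_3$, and conjugate involutions have isomorphic (indeed conjugate) fixed and negated subloops, hence equal orders. Computing directly: $S_1$ has $\mathrm{Fix} = L_3$ (order $81$) and $\mathrm{Neg} = \{0\}$ (order $1$); $S_2$ has $\mathrm{Fix} = \{x : 2x = 0\} = \{(e,0)\}$ (order $1$, since $L_3$ has exponent $3$) and $\mathrm{Neg} = L_3$ (order $81$); $S_3$ fixes exactly $C_3^2 \times \{0\}$ and negates exactly $\{e\} \times \mathbb{Z}/_3[\omega]$, so both subloops have order $9$; $S_4$ negates $\{e\}\times\{0\}$ only (order $1$) while fixing $\{e\} \times \mathbb{Z}/_3[\omega]$ (order $9$) — wait, one must be careful here, so let me reconsider: $S_4$ fixes $(b^m c^n, q + r\omega)$ iff $b^{-m}c^{-n} = b^m c^n$, i.e. $2m = 2n = 0$, i.e. $m = n = 0$; so $\mathrm{Fix}(S_4) = \{e\} \times \mathbb{Z}/_3[\omega]$ has order $9$, and $\mathrm{Neg}(S_4) = \{x : x^{S_4} = -x\}$ requires $b^{-m}c^{-n} = b^{-m}c^{-n}$ (automatic) and $q + r\omega = -(q+r\omega)$, forcing $q = r = 0$, so $\mathrm{Neg}(S_4) = C_3^2 \times \{0\}$, order $9$. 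So $S_3$ and $S_4$ have the same $(|\mathrm{Fix}|, |\mathrm{Neg}|) = (9,9)$, and this invariant alone does not separate them.

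To distinguish $S_3$ from $S_4$ I would pass to a finer invariant that sees the non-associative structure. The natural choice is the behavior of $S$ on the associative center $Z(L_3) = \{(e, a - a\omega) : a \in \mathbb{Z}/_3\}$, which is a characteristic subloop, hence $S$-invariant, and conjugate involutions induce conjugate (here, equal, since $|Z(L_3)| = 3$ is small) actions on it. Now $S_3$ acts on $Z(L_3)$ by negation: $(e, a - a\omega) \mapsto (e, -a + a\omega) = -(e, a-a\omega)$, so $S_3|_{Z(L_3)} = -1$, i.e. $S_3$ does not fix $Z(L_3)$ pointwise. By contrast $S_4$ acts trivially on $Z(L_3)$: $(e, a-a\omega) \mapsto (e, a - a\omega)$ since the $C_3^2$-coordinate is already $e$. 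So $S_3$ and $S_4$ restrict to the central subloop differently, and therefore cannot be conjugate. Combined with the fixed/negated-subloop orders handling the other three pairs, this shows all four LBTS are pairwise non-isomorphic.

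The main obstacle I anticipate is precisely the $S_3$ versus $S_4$ separation: both have $9$-element fixed and negated subloops, and in fact those subloops are abstractly isomorphic (both are elementary abelian of order $9$), so a crude order- or isomorphism-type count is insufficient and one genuinely needs an invariant that remembers how the subloop sits inside $L_3$ relative to the associator structure — the action on $Z(L_3)$ does exactly this. A secondary point requiring care is verifying that $S_3$ and $S_4$ really are automorphisms of the twisted product (as opposed to merely additive bijections), which amounts to checking compatibility with the exponent $\nu = m_1 n_2 - n_1 m_2$; for $S_3$ this is immediate because $\nu$ depends only on the $C_3^2$-coordinates, which $S_3$ leaves untouched, while for $S_4$ one uses that negating both $(m_1,n_1)$ and $(m_2,n_2)$ leaves $\nu$ invariant, and $S_4$ fixes the $\mathbb{Z}/_3[\omega]$-coordinates so $\omega^{\pm\nu}$ scaling is preserved.
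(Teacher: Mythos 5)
Your proposal is correct, and it reaches the conclusion by a genuinely different route than the paper. Both arguments begin with the same reduction (via Theorem \ref{Thm:LBDSIsom}) to showing that $S_1,\dots,S_4$ lie in distinct conjugacy classes of $\mathrm{Aut}(L_3)$, but the invariants used to separate the classes differ. The paper disposes of $S_1$ and $S_2$ by observing that the identity and inversion are central in $\mathrm{Aut}(L_3)$, hence form singleton conjugacy classes; you instead compute the orders of the fixed and negated subloops $\mathrm{Fix}(S)$ and $\mathrm{Neg}(S)$ (legitimate conjugacy invariants by Proposition \ref{Prop:IdempSubqugp}), getting $(81,1)$, $(1,81)$, $(9,9)$, $(9,9)$ --- which, as you note, handles $S_1$ and $S_2$ but cannot separate $S_3$ from $S_4$ since $\mathrm{Fix}(S_3)=\mathrm{Neg}(S_4)$ and $\mathrm{Neg}(S_3)=\mathrm{Fix}(S_4)$. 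For the crux, $S_3$ versus $S_4$, the paper uses the endomorphy of the multiplicative squaring map $\mathrm{Sq}_\circ$ (endomorphic for $S_3$, not for $S_4$), tying the lemma back to the split-extension theory of Theorem \ref{Thm:LBTSSplitExt} and reusing the computation of Remark \ref{Rmk:3pSqMapAnalysis}; you instead restrict to the characteristic subloop $Z(L_3)$ and observe that $S_3$ acts there as inversion while $S_4$ acts trivially, and since $\mathrm{Aut}(Z(L_3))\cong\mathbb{Z}/_2$ is abelian this restriction is a genuine conjugacy invariant. Both separations are valid and about equally short; yours has the advantage of being fully explicit and self-contained (the paper only sketches the non-endomorphy of $\mathrm{Sq}_\circ$ for $S_4$ by analogy), while the paper's choice of invariant is the one that illuminates the structural dichotomy the authors care about, namely which LBTS split as in Corollary \ref{Cor::LBTSSmplitExt}. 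Your verifications that $S_3$ and $S_4$ respect the twisting exponent $\nu$, and the $\mathrm{Fix}$/$\mathrm{Neg}$ computations, are all correct.
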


\begin{proof}
Since $S_1$ and $S_2$ are central in $\text{Aut}(L_3)$, they are each pairwise non-isomorphic to any LBTS over $L_3$. Now, we just need to show $S_3$ and $S_4$ correspond to non-isomorphic LBTS. This follows from the fact that in the LBTS-quasigroup corresponding to $S_3$, the multiplicative squaring map is endomorphic, while in the LBTS-quasigroup of $S_4$, it is not. Showing that $(L_3, \boxplus, (e, 0), S_4)$ fails to have endomorphic multiplicative squaring map works just like our argument in Remark \ref{Rmk:3pSqMapAnalysis} that showed the right divisional squaring map in $(B_{p, 3}, \boxplus, (0, 0), [-1, 1, \lambda_0])$ was not endomorphic.
\end{proof}

\begin{theorem}\label{Thm:LBTS81}
Let $Q_1$ denote the order-3 LBTS $(\mathbb{Z}/_3, +, 0, 1)$ and $Q_{-1}$ the order-$3$ LBTS $(\mathbb{Z}/_3, +, 0, -1)$. If $(Q, +, e, S)$ is a nontrivial LBTS such that $|Q|\leq 81$, then it is isomorphic to exactly one of the following:
    \begin{itemize}
        \item[(a)] \underline{order 3}: $Q_1,$ $Q_{-1}$
        \item[(b)] \underline{order 9}: $(Q_1)^2,$ $(Q_{-1})^2$, $Q_1\times Q_{-1}$
        \item[(c)] \underline{order 27}: $(Q_1)^3$, $(Q_{-1})^3$, $(Q_1)^2\times Q_{-1}$, $(Q_{-1})^2\times Q_1$
        \item[(d)] \underline{order 81, associative}: $(Q_1)^4, (Q_{-1})^4$, $(Q_1)^3\times Q_{-1}$, $(Q_{-1})^3\times Q_1$, $(Q_1)^2\times (Q_{-1})^2$,
        \item[(e)] \underline{order 81, nonassociative}: $(L_3, \boxplus, (e, 0), S_i)$ where $1\leq i\leq 4$ and each $S_i$ comes from Lemma \ref{Lmm:L3LBTS}.
    \end{itemize}
\end{theorem}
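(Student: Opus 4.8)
The plan is to mimic the strategy of Section~\ref{SubSec:SmallOrderLBDS}: combining Corollary~\ref{Cor:LBTSCML} with Theorem~\ref{Thm:LBDSIsom} (which applies to LBTS, these being precisely the LBDS whose associated Bruck loop is a $\CMLt$), one sees that isomorphism classes of LBTS with underlying set of cardinality $n$ are in bijection with the pairs $([Q,+,e],\,\mathcal{C})$ where $[Q,+,e]$ ranges over isomorphism classes of $\CMLt$ of order $n$ and $\mathcal{C}$ over conjugacy classes of involution in $\mathrm{Aut}(Q,+,e)$. So the first step is to list the $\CMLt$'s of order at most $81$. An associative CML is an abelian group, hence, having exponent $3$, an elementary abelian $3$-group; and there is no nonassociative CML of order less than $81$, while, up to isomorphism, there is exactly one of order $81$, namely $L_3$ \cite[Thm.~9.2]{KepkaNemec}. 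Thus the loops in play are $(\mathbb{Z}/_3)^{k}$ for $k\in\{1,2,3,4\}$ together with $L_3$, consistent with Corollary~\ref{Cor:LBTSOrder}.

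For the elementary abelian loops, $\mathrm{Aut}((\mathbb{Z}/_3)^{k})=GL_k(\mathbb{Z}/_3)$. Since $2$ is a unit modulo $3$, the minimal polynomial of any involution divides $(t-1)(t+1)$, which has distinct roots, so every involution is diagonalizable with eigenvalues in $\{1,-1\}$ and its conjugacy class is determined by the dimension $j$ of its $1$-eigenspace, $0\le j\le k$. This yields $k+1$ classes, represented by $(Q_1)^{j}\times(Q_{-1})^{k-j}$; these give pairwise non-isomorphic LBTS because the $1$-eigenspace dimension of $S$ is an isomorphism invariant of the LBTS (by Theorem~\ref{Thm:LBTSFactorization}(a) it equals $\log_3|\mathrm{Sq}_\circ(Q)|$), and for $k=1$ the group $GL_1(\mathbb{Z}/_3)$ is abelian so $Q_1\not\cong Q_{-1}$ directly. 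Taking $k=1,2,3,4$ produces exactly the lists in parts (a)--(d).

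It remains to handle $L_3$, and here Lemma~\ref{Lmm:L3LBTS} already exhibits four pairwise non-conjugate involutions $S_1,\dots,S_4$ in $\mathrm{Aut}(L_3)$, so the only thing left is to show there are no further conjugacy classes. The structural argument I would run is: every $\alpha\in\mathrm{Aut}(L_3)$ fixes the associator subloop $Z(L_3)\cong\mathbb{Z}/_3$ (Definition~\ref{Def:L3}), hence induces $\bar\alpha\in\mathrm{Aut}(L_3/Z(L_3))=GL_3(\mathbb{Z}/_3)$ and a scalar $\alpha|_{Z(L_3)}\in\{1,-1\}$; since the associator factors through an alternating trilinear map $(L_3/Z(L_3))^{3}\to Z(L_3)$, necessarily a nonzero multiple of the determinant form, $\mathrm{Aut}$-equivariance forces $\alpha|_{Z(L_3)}=\det(\bar\alpha)$. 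Because $L_3$ is the unique nonassociative, hence nilpotent-of-class-$2$, $\CMLt$ of its order --- equivalently, the free $\CMLt$ on three generators --- the projection $\pi:\mathrm{Aut}(L_3)\to GL_3(\mathbb{Z}/_3)$ is surjective with kernel $\mathrm{Hom}(L_3/Z(L_3),Z(L_3))\cong(\mathbb{Z}/_3)^{3}$. For an involution $\alpha$, $\bar\alpha$ is an involution of $GL_3(\mathbb{Z}/_3)$, so up to conjugacy $\bar\alpha\in\{I,\ \mathrm{diag}(1,1,-1),\ \mathrm{diag}(1,-1,-1),\ -I\}$, and a direct check shows these four types are realized respectively by $S_1,S_3,S_4,S_2$. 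Finally, for each such $\bar\alpha$ one shows that any two involutive $\pi$-preimages are conjugate in $\mathrm{Aut}(L_3)$, by conjugating with a kernel element (equivalently, an element of the centralizer of $\bar\alpha$) to normalize the $\mathrm{Hom}$-component of $\alpha$; this is a finite computation with the extension $1\to(\mathbb{Z}/_3)^{3}\to\mathrm{Aut}(L_3)\xrightarrow{\pi}GL_3(\mathbb{Z}/_3)\to1$ and its cocycle. This gives part (e), and since $L_3$ is not a group these order-$81$ LBTS are disjoint from those of (d).

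The main obstacle is precisely this last step over $L_3$: verifying that each of the four $GL_3(\mathbb{Z}/_3)$-conjugacy types of $\bar\alpha$ supports exactly one $\mathrm{Aut}(L_3)$-conjugacy class of involutive lifts. If one prefers to bypass the extension bookkeeping, the clean alternative --- entirely in the spirit of the order-$27$ and order-$81$ enumeration for LBDS in Section~\ref{SubSec:SmallOrderLBDS} --- is to invoke a \texttt{GAP}/\texttt{LOOPS} \cite{LOOPS} computation certifying that $\mathrm{Aut}(L_3)$ has exactly four conjugacy classes of involutions, which with Lemma~\ref{Lmm:L3LBTS} immediately yields (e). Everything else --- enumerating the $\CMLt$'s and analyzing $GL_k(\mathbb{Z}/_3)$ --- is routine.
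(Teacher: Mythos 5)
Your proposal is correct and follows essentially the same route as the paper: the associative cases are handled by diagonalizing involutions of $GL_k(\mathbb{Z}/_3)$ (the paper phrases this via rational canonical forms), and the nonassociative case reduces to the unique nonassociative $\CMLt$ of order $81$, where Lemma \ref{Lmm:L3LBTS} together with a \texttt{GAP} count showing $\mathrm{Aut}(L_3)$ has exactly four conjugacy classes of involutions finishes the argument --- which is precisely the fallback you invoke. Your sketched computer-free analysis of $\mathrm{Aut}(L_3)$ as an extension of $GL_3(\mathbb{Z}/_3)$ by $\mathrm{Hom}(L_3/Z(L_3),Z(L_3))$ is a plausible alternative for that last step, but since you do not carry out the verification that each of the four types of $\bar\alpha$ supports a single class of involutive lifts, the proof as written rests on the same computation the paper uses.
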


\begin{proof}
    If the underlying loop is associative, then conjugacy classes of involutions are determined by the rational canonical form over $\mathbb{Z}/_3$. This accounts for items (a)-(d).  Since $(L_3, \boxplus, (e, 0))$ is the sole nonassociative $\CMLt$ of order $81$, all remaining LBTS are defined over this loop. As we saw in Lemma \ref{Lmm:L3LBTS}, the $(L_3, \boxplus, (e, 0), S_i)$'s are pairwise non-isomorphic. Moreover, one may use \texttt{GAP} to compute the automorphism group of $L_3$ and its conjugacy classes; there are four classes of involutions, and, thus, the $S_i$'s account for all LBTS occurring over $L_3.$
\end{proof}

\subsection*{Acknowledgement}
The authors would like to thank Michael Kinyon for help with \texttt{Mace4} and for pointing out the fact that Bruck loops would be a unifying class of loop isotopes for LBDS.

\appendix
\section{Some proofs}\label{appendix:human_proof}
\begin{lemma}\label{Lmm:RQ}
Let $(Q, r)$ be an LBDS. Then $(Q, r)$ is a non-degenerate solution.
\end{lemma}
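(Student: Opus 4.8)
The plan is as follows. Left nondegeneracy is immediate: the Latin hypothesis says $(Q,\circ,\ld,/^\circ)$ is a quasigroup, hence in particular $(Q,\circ,\ld)$ is a left quasigroup. So the whole statement reduces to showing that for every $q\in Q$ the right translation $R^\bullet_q\colon x\mapsto x\bullet q$ is a bijection; equivalently, that $\bullet$ admits a right division $/^\bullet$. The device I would use is the family of maps
\[
F_u\colon Q\to Q,\qquad F_u(x)=x\bullet(x\ld u)\qquad(u\in Q);
\]
I will first show each $F_u$ is a bijection, and then identify $F_q$ with the two-sided inverse of $R^\bullet_q$.

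That each $F_u$ is a bijection I would get from the fact that $r$ itself is an invertible map $Q^2\to Q^2$ --- this is already part of Definition \ref{Def:InfixSoltns}, and in any case is forced here by the dihedral relation $(\tau r)^2=1_{Q^2}$, which yields $r^{-1}=\tau r\tau$. Given $v\in Q$, let $(x,y)$ be the unique preimage of $(u,v)$ under $r$, so $x\circ y=u$ and $x\bullet y=v$; since $x\circ y=u$ forces $y=x\ld u$ by axioms (IL) and (SL), we get $F_u(x)=x\bullet(x\ld u)=x\bullet y=v$, so $F_u$ is surjective. For injectivity, if $F_u(x_1)=F_u(x_2)$ then, putting $y_i=x_i\ld u$, axiom (SL) gives $r(x_i,y_i)=(u,F_u(x_i))$ for $i=1,2$ with equal right-hand sides, so injectivity of $r$ forces $x_1=x_2$.

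To conclude I would substitute $y=x\ld q$ into the dihedral identity \eqref{Eq:Di1}, $(x\bullet y)\bullet(x\circ y)=x$. Since $x\circ(x\ld q)=q$ by (SL), this becomes $F_q(x)\bullet q=x$, i.e.\ $R^\bullet_q\circ F_q=1_Q$; as $F_q$ is a bijection, $R^\bullet_q$ is its two-sided inverse, hence a bijection, with right division given explicitly by $x/^\bullet q=x\bullet(x\ld q)$. This is right nondegeneracy, and together with left nondegeneracy it proves the lemma. I would remark that the argument invokes only the invertibility of $r$, the left-quasigroup part of the Latin hypothesis, and the single identity \eqref{Eq:Di1}; neither the braid relations \eqref{Eq:YB1}--\eqref{Eq:YB3} nor \eqref{Eq:Di2} is needed. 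There is no genuinely hard step; the one point to get right is that the bijectivity of $F_u$ must be extracted from the \emph{global} invertibility of $r$ on $Q^2$ after eliminating the second coordinate --- a one-variable manipulation of the identities alone does not hand over an inverse for $R^\bullet_q$.
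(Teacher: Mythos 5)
Your proof is correct, and while it shares a key ingredient with the paper's argument, the logical route is genuinely different. Both proofs hinge on the same witness map $F_q(x)=x\bullet(x\ld q)$ and on the identity \eqref{Eq:Di1}, which the paper uses exactly as you do to show $R^\bullet_q\circ F_q=1_Q$ (there phrased as surjectivity of $R^\bullet_q$). Where you diverge is in handling injectivity: the paper proves injectivity of $R^\bullet_q$ directly from \eqref{Eq:Di2} together with left \emph{and} right cancellation in $(Q,\circ)$, whereas you establish bijectivity of $F_q$ from the global invertibility of $r$ (legitimately available, both from Definition \ref{Def:InfixSoltns} and from $(\tau r)^2=1_{Q^2}$ via $r^{-1}=\tau r\tau$) and then read off $R^\bullet_q=F_q^{-1}$. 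The payoff of your version is that it uses neither \eqref{Eq:Di2} nor the right division $/^\circ$, so it actually proves the stronger statement that every \emph{left nondegenerate} braided dihedral set is right nondegenerate; the paper's version is more self-contained at the level of identities, staying entirely inside the equational theory of the two operations without appealing to invertibility of $r$ as a map on $Q^2$. Your closing remark correctly isolates the hypotheses used, and the explicit formula $x/^\bullet q=x\bullet(x\ld q)$ matches the paper's witness $a=w\bullet(w\ld y)$.
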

\begin{proof}
We will show that, for each $y\in Q$, the right translation map $R^\bullet_y\colon Q\to Q$, $R^\bullet_y(x)=x\bullet y$ is a bijection. Let $x\bullet y=z\bullet y$ for some $x,z\in Q$. By \eqref{Eq:Di2} one has: $(x\bullet y)\circ(x\circ y)=(z\bullet y)\circ(z\circ y)$ and, since $(Q,\circ)$ is a left quasigroup, $x\circ y=z\circ y$. Now, since $(Q,\circ)$ is a right quasigroup, too, $x=z$ and $R^\bullet_y$ is an injection. To show that it is \emph{onto}, consider the element $a=w\bullet(w\setminus_\circ y)\in Q$ for some $w\in Q$. Then
\begin{align*}
(w\bullet(w\setminus_\circ y))\bullet y=(w\bullet(w\setminus_\circ y))\bullet (w\circ(w\setminus_\circ y))=w.
\end{align*}
The first equality is due to the fact that $(Q,\circ)$ is a left quasigroup, so $w\circ(w\setminus_\circ y)=y$ and the second one, by \eqref{Eq:Di1}. This shows that for each $w\in Q$ there exists $a\in Q$ such that $w=R^\bullet_y(a)$ and $R^\bullet_y$ is a surjection.
\end{proof}
Consider diagonal mappings $x^S=x\setminus_\circ x$ and $x^T=x/^\bullet x$.
\begin{lemma}\label{Lmm:STmutinv}
Let $(Q, r)$ be an LBDS. The mappings $S$ and $T$ are mutually inverse.
\end{lemma}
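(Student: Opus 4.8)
The plan is to prove the sharper statement that $T$ is the two-sided inverse of $S$ as a self-map of $Q$, so that in particular $S\circ T=T\circ S=1_Q$, i.e.\ $S$ and $T$ are mutually inverse. The key observation is that in an LBDS the operation $\bullet$ ignores its second argument: by Lemma~\ref{Lmm:DihedLmm1}(a) we have $x\bullet y=x^S$ for all $x,y\in Q$, and hence, for every fixed $y$, the right translation $R^\bullet_y$ is literally the map $S$.

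First I would note that $S$ is a bijection. This is immediate from nondegeneracy (Lemma~\ref{Lmm:RQ}, equivalently Lemma~\ref{Lmm:DihedLmm1}(c)): since $(Q,\bullet,/^\bullet)$ is a right quasigroup, each $R^\bullet_y$ is invertible, and as $R^\bullet_y=S$ this forces $S$ to be invertible with $(R^\bullet_y)^{-1}=S^{-1}$ for every $y$. The right-division axioms (IR) and (SR) for $\bullet$ then say exactly that $a/^\bullet y=(R^\bullet_y)^{-1}(a)=S^{-1}(a)$, again independently of $y$. Specializing to $a=y=x$ gives $x^T=x/^\bullet x=S^{-1}(x)$, so $T=S^{-1}$ as maps $Q\to Q$. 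This is precisely the assertion that $S$ and $T$ are mutually inverse; equivalently it yields identities \eqref{E:Sta_6} and \eqref{E:Sta_7}, so that an LBDS is in particular a biquandle, cf.\ Remark~\ref{Rmk:Birack}.

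I do not expect a genuine obstacle: the whole argument rests on the single nontrivial input $x\bullet y=x^S$ (proved earlier with the aid of \texttt{Prover9}) together with nondegeneracy, after which the conclusion is forced. The one point to be careful about is to phrase everything through $R^\bullet_y$ and the division axioms rather than silently invoking $S^2=1_Q$ from Lemma~\ref{Lmm:DihedLmm1}(b); that extra fact can then be combined with $T=S^{-1}$ to upgrade the conclusion to $S=T$, i.e.\ $\text{Sq}_{\setminus_\circ}=\text{Sq}_{/^\bullet}$, which is presumably the content of the following Lemma~\ref{Lmm:DihedSinv}.
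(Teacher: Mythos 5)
Your derivation is internally consistent, but it is circular in the context of this paper. The ``single nontrivial input'' you rely on, $x\bullet y=x^S$ (Lemma~\ref{Lmm:DihedLmm1}(a)), is not available at this point: its proof is deferred to Appendix~\ref{appendix:human_proof}, where it appears as the final lemma, and that proof is built on the chain Lemma~\ref{Lmm:STmutinv} $\Rightarrow$ Lemma~\ref{Lmm:DihedSinv} $\Rightarrow$ Lemma~\ref{Lmm:DihedIdent}(c) $\Rightarrow$ ($x\bullet y=x^S$). Concretely, the step ``$(x\ld x)\circ x=x\ld x$'' in the proof of Lemma~\ref{Lmm:DihedSinv}, which is reused in the final lemma of the appendix, requires the identity $x\bullet(x\ld x)=x\ld x$, and that identity is exactly what the proof of Lemma~\ref{Lmm:STmutinv} establishes. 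So you cannot assume that $\bullet$ ignores its second argument here; the present lemma is one of the stepping stones toward that fact, not a consequence of it.

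The intended argument avoids $x\bullet y=x^S$ entirely: substitute $y=x\ld x$ into \eqref{Eq:YB1} to get $x\circ((x\ld x)\circ z)=x\circ((x\bullet(x\ld x))\circ z)$, then cancel on the left and on the right (this is where the Latin hypothesis enters) to obtain $x\bullet(x\ld x)=x\ld x$. This is a restatement of \eqref{E:Sta_6}, i.e.\ $(x^S)^T=x$; the equivalence of \eqref{E:Sta_6} and \eqref{E:Sta_7} in any birack (Remark~\ref{Rmk:Birack}, with nondegeneracy supplied by Lemma~\ref{Lmm:RQ}) then gives $(x^T)^S=x$ as well, so $S$ and $T$ are mutually inverse. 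To salvage your write-up you would need an independent proof of $x\bullet y=x^S$ that bypasses this lemma, and the paper provides none.
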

\begin{proof}
Substituting $y$ by $x\setminus_\circ x$ in \eqref{Eq:YB1} one obtains
\begin{align*}
x\circ ((x\setminus_\circ x)\circ z)&=(x\circ(x\setminus_\circ x))\circ ((x\bullet (x\setminus_\circ x))\circ z)\\
&=x\circ ((x\bullet (x\setminus_\circ x))\circ z).
\end{align*}
Now, first applying the left cancellative law and then the right one for a quasigroup $(Q,\circ)$, one has
$x\setminus_\circ x=x\bullet (x\setminus_\circ x)$. The latter is equivalent to the identity \eqref{E:Sta_6}, and to \eqref{E:Sta_7} (see Remark \ref{Rmk:Birack}).

Translating \eqref{E:Sta_6} and \eqref{E:Sta_7} to the language of mappings $S$ and $T$ one gets that they are mutually inverse.
\end{proof}
\begin{lemma}\label{Lmm:DihedSinv}
The diagonal mapping $S$ is involutive. In particular, $S=T$.
\end{lemma}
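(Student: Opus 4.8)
The plan is to reduce everything to the single fact $S^2 = 1_Q$: once $S$ is known to be involutive, Lemma \ref{Lmm:STmutinv} gives $S = S^{-1} = T$ at once, since that lemma already identifies $S$ and $T$ as mutually inverse.

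To prove $S^2 = 1_Q$ I would start from the identity $x \bullet x^S = x^S$, which is exactly what was extracted along the way in the proof of Lemma \ref{Lmm:STmutinv} (there recorded as $x\setminus_\circ x = x\bullet(x\setminus_\circ x)$), and which rests only on \eqref{Eq:YB1} together with the two-sided cancellativity of $(Q,\circ)$. Pairing this with the quasigroup axiom (SL), which reads $x\circ x^S = x$, I would then specialise the dihedral identity \eqref{Eq:Di2}, namely $(a\bullet b)\circ(a\circ b) = b$, at $a := x$ and $b := x^S$: the first parenthesised factor becomes $x\bullet x^S = x^S$ and the second becomes $x\circ x^S = x$, so \eqref{Eq:Di2} collapses to $x^S \circ x = x^S$.

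Because $(Q,\circ)$ is a genuine (two-sided) quasigroup --- this is the one place where the Latin hypothesis enters --- this last equation can be solved for its right argument: $x = x^S\setminus_\circ x^S = (x^S)^S = x^{S^2}$. Hence $S^2 = 1_Q$, so $S$ is involutive, and $S = T$ follows as above.

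I do not expect a real obstacle here: the substantive content has already been absorbed into Lemma \ref{Lmm:STmutinv}, and what remains is essentially a two-line cancellation. The points needing a little care are choosing exactly the specialisation $(x, x^S)$ of the arguments of \eqref{Eq:Di2} (generic arguments lead nowhere), and being entitled to cancel on the right in $(Q,\circ)$, which is valid precisely because an LBDS is Latin. As a sanity check one can note the companion identity $x^S\bullet x = x$, obtained the same way from \eqref{Eq:Di1} at $(x, x^S)$ --- not needed for the proof, but reassuring.
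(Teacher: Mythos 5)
Your proof is correct and follows essentially the same route as the paper: substitute $y=x^S$ into \eqref{Eq:Di2}, use $x\bullet x^S=x^S$ (extracted in the proof of Lemma \ref{Lmm:STmutinv}) together with (SL) to collapse it to $x^S\circ x=x^S$, and then divide to get $x^{S^2}=x$, with $S=T$ following from Lemma \ref{Lmm:STmutinv}. One small quibble: the final step solves $x^S\circ x=x^S$ for its right argument, which is a \emph{left} division and needs only the left-quasigroup structure, so the Latin hypothesis actually enters upstream (in establishing $x\bullet x^S=x^S$), not at the cancellation you flag.
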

\begin{proof}
Substituting $y$ by $x\setminus_\circ x$ in \eqref{Eq:Di2} one obtains $(x\bullet (x\setminus_\circ x))\circ(x\circ (x\setminus_\circ x))=x\setminus_\circ x$. Hence, $(x\setminus_\circ x)\circ x=x\setminus_\circ x$ and finally $x=(x\setminus_\circ x)\setminus_\circ(x\setminus_\circ x)$.
\end{proof}

\begin{lemma}\label{Lmm:DihedIdent}
 In any LBDS $(Q, r)$, all of the following hold:
    \begin{itemize}
        \item[(a)] $x\bullet (x\bullet y)=(x\bullet y)\circ x$.
        \item[(b)] $(x\bullet y)\bullet x=x\circ(x\bullet y)$.
        \item[(c)] $(x\bullet y)\bullet (y\setminus_\circ y)=x$.
    \end{itemize}
\end{lemma}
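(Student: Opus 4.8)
The plan is to reduce all three identities to the single structural fact established in Lemma~\ref{Lmm:DihedLmm1}, namely that in an LBDS the map $\bullet$ depends only on its first argument, $x\bullet y = x^S$, together with the involutivity $S^2 = 1_Q$ from the same lemma. Once $\bullet$ is rewritten in this way, each claimed identity collapses to a short computation inside the left quasigroup $(Q,\circ,\ld)$.

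First I would record the two facts about $\circ$ and $S$ that the argument uses. Since $x^S = x\ld x$, axiom (SL) gives $x\circ x^S = x$. Substituting $y = x^S$ into Lemma~\ref{Lmm:DihedLmm2}(a) then yields $x^S\circ x = x^S$; this is exactly the computation appearing in the proof of Lemma~\ref{Lmm:DihedSinv}. With these in hand, the three parts run as follows. For (a): the left-hand side is $x\bullet(x\bullet y) = x\bullet x^S = x^S$, since the outer $\bullet$ ignores its second input, while the right-hand side is $(x\bullet y)\circ x = x^S\circ x = x^S$, so the two agree. For (b): the left-hand side is $(x\bullet y)\bullet x = x^S\bullet x = (x^S)^S = x$ by $S^2 = 1_Q$, and the right-hand side is $x\circ(x\bullet y) = x\circ x^S = x$ by (SL). For (c): the left-hand side is $(x\bullet y)\bullet(y\ld y) = x^S\bullet y^S = (x^S)^S = x$, once more by $S^2 = 1_Q$.

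I do not anticipate a genuine obstacle here. The only step that is not purely formal is the auxiliary identity $x^S\circ x = x^S$, which is immediate from Lemma~\ref{Lmm:DihedLmm2}(a) (and already implicit in the proof of Lemma~\ref{Lmm:DihedSinv}). As with Lemma~\ref{Lmm:DihedLmm1}, one could also verify all three statements mechanically with \texttt{Prover9}, but the hand argument above is short enough that it is worth including in full.
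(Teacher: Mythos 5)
Your computations are internally consistent, but the argument is circular within the paper's logical architecture. You derive all three identities from Lemma~\ref{Lmm:DihedLmm1}(a), i.e.\ from $x\bullet y=x^S$ (together with $S^2=1_Q$). However, the paper does not prove $x\bullet y=x^S$ independently: Lemma~\ref{Lmm:DihedLmm1} explicitly defers that fact to Appendix~\ref{appendix:human_proof}, and the final lemma of that appendix establishes $x\bullet y=x^S$ precisely by substituting into parts (b) and (c) of Lemma~\ref{Lmm:DihedIdent}. So Lemma~\ref{Lmm:DihedIdent} is an intermediate step \emph{toward} $x\bullet y=x^S$, and you cannot invoke that conclusion to prove it. (Indeed, once $x\bullet y=x^S$ is known, all three identities collapse to trivialities exactly as you show --- which is a sign that the lemma's entire content lies in proving them \emph{without} that fact.)

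The paper's actual proof works directly from the defining identities of an LBDS: part (a) is obtained by expanding $y=(x\bullet y)\circ(x\circ y)$ via \eqref{Eq:YB1}, comparing with a second instance of \eqref{Eq:Di2}, cancelling in the right quasigroup $(Q,\circ)$, and then applying \eqref{Eq:YB3} and \eqref{Eq:Di1}; part (b) is a substitution into (a); and part (c) uses \eqref{Eq:YB1} together with the involutivity of $S$ from Lemma~\ref{Lmm:DihedSinv} (which is legitimately available at that point, unlike Lemma~\ref{Lmm:DihedLmm1}(a)). To repair your proposal you would need to replace every appeal to $x\bullet y=x^S$ with arguments of this kind, or else supply an independent proof of $x\bullet y=x^S$ that does not pass through Lemma~\ref{Lmm:DihedIdent} --- at which point the lemma itself would become redundant.
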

\begin{proof}

(a) By \eqref{Eq:Di2} and \eqref{Eq:YB1} we have:
\begin{align*}
 y=(x\bullet y)\circ (x\circ y)=((x\bullet y)\circ x)\circ (((x\bullet y)\bullet x)\circ y).
\end{align*}
On the other hand, by \eqref{Eq:Di2}, we have also
\begin{align*}
 (((x\bullet y)\bullet x)\bullet y)\circ (((x\bullet y)\bullet x)\circ y) =y.
\end{align*}
And, since $(Q,\circ)$ is a right quasigroup, $(x\bullet y)\circ x=((x\bullet y)\bullet x)\bullet y$ follows. Now, by \eqref{Eq:YB3} and \eqref{Eq:Di1} $((x\bullet y)\bullet x)\bullet y=x\bullet (x\bullet y)$.

Substituting in (a) $x$ by $x\bullet y$ and $y$ by $x\circ y$ we obtain (b).

First, to prove (c), substitute in \eqref{Eq:YB1}
 $x$ by $x\bullet y$, $y$ by $y\setminus_\circ y$ and $z$ by $y$:
 \begin{align*}
 &(x\bullet y)\circ (y\setminus_\circ y)= ((x\bullet y)\circ (y\setminus_\circ y))\circ(((x\bullet y)\bullet (y\setminus_\circ y)) \circ y).
 \end{align*}
 Since $(Q,\circ)$ is a left quasigroup, we obtain then
 \begin{align*}
 ((x\bullet y)\bullet (y\setminus_\circ y)) \circ y=((x\bullet y)\circ (y\setminus_\circ y))\setminus_\circ ((x\bullet y)\circ (y\setminus_\circ y))=((x\bullet y)\circ (y\setminus_\circ y))^S.
 \end{align*}
 By \eqref{Eq:YB1} we have $(x\bullet y)\circ z=(x\circ y)\setminus_\circ (x\circ (y\circ z))$. Hence,
 $(x\bullet y)\circ (y\setminus_\circ y)=(x\circ y)\setminus_\circ (x\circ y)=(x\circ y)^S$. Since the squaring map $S$ is involutive, we get
 $((x\bullet y)\bullet (y\setminus_\circ y)) \circ y=x\circ y$  and, since $(Q,\circ)$ is a right quasigroup,
 $x=(x\bullet y)\bullet (y\setminus_\circ y)$.
 \end{proof}
\begin{lemma}\label{Lmm:ident}
Let $(Q, r)$ be an LBDS. Then $(x\circ y)\bullet z= (x\bullet z)\circ (y\bullet (x\circ z))$.
\end{lemma}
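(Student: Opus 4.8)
The plan is as follows. If one has already established $x\bullet y=x\backslash_\circ x=:x^S$ (Lemma \ref{Lmm:DihedLmm1}(a)), the identity is immediate, since then both sides collapse to $x^S\circ y^S$, using that the map $x\mapsto x^S$ is an endomorphism of $(Q,\circ)$. The substance is therefore to argue directly from the braid identities \eqref{Eq:YB1}--\eqref{Eq:YB3}, the dihedral identities \eqref{Eq:Di1}--\eqref{Eq:Di2}, the non-degeneracy of $(Q,\bullet)$ (Lemma \ref{Lmm:RQ}), the facts $S=T$ and $S^2=1_Q$ (Lemmas \ref{Lmm:STmutinv} and \ref{Lmm:DihedSinv}), and Lemma \ref{Lmm:DihedIdent}.

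First I would set up two rewriting rules. Lemma \ref{Lmm:DihedIdent}(c) gives $(a\bullet b)\bullet b^S=a$; replacing $b$ by $b^S$ and using $S^2=1_Q$ yields $(a\bullet b^S)\bullet b=a$ as well, so every $R^\bullet_b$ is a bijection with inverse $R^\bullet_{b^S}$. Next, apply \eqref{Eq:Di1} to the pair $(x,\,x\backslash_\circ q)$, whose $r$-image is $(q,\,x\bullet(x\backslash_\circ q))$; this gives $(x\bullet(x\backslash_\circ q))\bullet q=x$, hence
\[
x\bullet(x\backslash_\circ q)=x\bullet q^S,\qquad\text{equivalently}\qquad x\bullet q=x\bullet(x\circ q)^S .
\]
Running the same argument on the pair $(y,\,(x\bullet y)\backslash_\circ z)$, and using \eqref{Eq:YB1} to identify $y\circ((x\bullet y)\backslash_\circ z)$ with $x\backslash_\circ((x\circ y)\circ z)$, rewrites $y\bullet((x\bullet y)\backslash_\circ z)$ as a single $\bullet$-translate of $y$.

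Then I would substitute $z\mapsto w:=(x\bullet y)\backslash_\circ z$ into \eqref{Eq:YB2}. The left-hand side becomes exactly $(x\circ y)\bullet z$, and the right-hand side becomes $\bigl(x\bullet(y\circ w)\bigr)\circ(y\bullet w)$. Rewriting $y\circ w$ by \eqref{Eq:YB1}, applying the two rules above to the two factors, and then invoking involutivity of $S$ together with left- and right-cancellation in the quasigroup $(Q,\circ)$, should bring the right-hand side to $(x\bullet z)\circ(y\bullet(x\circ z))$. The main obstacle I anticipate is precisely this reconciliation: matching the two $\bullet$-translates coming out of the substitution against $x\bullet z$ and $y\bullet(x\circ z)$ (or, failing a factorwise match, showing the two $\circ$-products agree). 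Doing this cleanly draws in \eqref{Eq:YB3}, in the translation form $R^\bullet_z\circ R^\bullet_y=R^\bullet_{y\bullet z}\circ R^\bullet_{y\circ z}$, and Lemma \ref{Lmm:DihedIdent}(a)--(b) to move $\bullet$-translations past one another; this bookkeeping is exactly the part handled by the mechanized substitution search that underlies this appendix's \texttt{Prover9} run, after which cancellation in $(Q,\circ)$ finishes the proof.
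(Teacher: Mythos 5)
Your setup is sound as far as it goes: the rules $R^\bullet_b\circ R^\bullet_{b^S}=R^\bullet_{b^S}\circ R^\bullet_b=1_Q$ and $x\bullet(x\backslash_\circ q)=x\bullet q^S$ do follow from Lemma \ref{Lmm:DihedIdent}(c), Lemma \ref{Lmm:DihedSinv} and \eqref{Eq:Di1}, and substituting $z\mapsto w:=(x\bullet y)\backslash_\circ z$ into \eqref{Eq:YB2} does yield $(x\circ y)\bullet z=(x\bullet(y\circ w))\circ(y\bullet w)$. But the proof stops exactly where the real work begins: you never show that this product equals $(x\bullet z)\circ(y\bullet(x\circ z))$, and instead defer the ``reconciliation'' to a mechanized substitution search. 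That is a genuine gap, and the factorwise route you sketch cannot close it. For instance, matching the first factors would require $x\bullet((x\circ y)\circ z)^S=x\bullet z$, and at this stage nothing gives injectivity of the left translations $L^\bullet_x$ (only the right translations $R^\bullet_y$ are known to be bijective, by Lemma \ref{Lmm:RQ}); indeed, once the target identity $x\bullet y=x^S$ is proved, $L^\bullet_x$ is a \emph{constant} map, so the factorwise equalities hold only trivially and any attempt to derive them by cancellation in the second argument of $\bullet$ is doomed. Proving the two $\circ$-products agree ``as a whole'' is essentially the whole content of the lemma, and your proposal does not supply an argument for it.

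The paper avoids this impasse with a different substitution and one extra twist. Specializing \eqref{Eq:YB2} at $x\mapsto x\bullet y$, $y\mapsto x\circ y$, and using \eqref{Eq:Di1}--\eqref{Eq:Di2} to simplify the left-hand side, gives $y\bullet(x\circ z)=w\circ d$ with $w=(x\bullet y)\bullet((x\circ y)\circ z)$ and $d=(x\circ y)\bullet z$. The key move is then to apply \eqref{Eq:Di2} to the \emph{pair} $(w,d)$, recovering $d=(w\bullet d)\circ(w\circ d)$; identity \eqref{Eq:YB3} together with \eqref{Eq:Di1} computes $w\bullet d=((x\bullet y)\bullet(x\circ y))\bullet z=x\bullet z$, and the claim falls out with no cancellation in $\bullet$ at all. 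If you want to salvage your route, aim for this kind of global identification --- exhibit your right-hand side as $(W\bullet D)\circ(W\circ D)$ for a pair with $W\circ D=y\bullet(x\circ z)$ and $W\bullet D=x\bullet z$ --- rather than trying to match the two $\bullet$-translates factor by factor.
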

\begin{proof}
By \eqref{Eq:Di1} and \eqref{Eq:Di2}, $y\bullet(x\circ z)=((x\bullet y)\circ (x\circ y))\bullet (((x\bullet y)\bullet (x\circ y))\circ z)$. Applying now \eqref{Eq:YB2} to the latter, we obtain the following
\begin{align*}
y\bullet(x\circ z)=\underbrace{((x\bullet y)\bullet ((x\circ y)\circ z))}_{w}\circ \underbrace{((x\circ y)\bullet z)}_d.
\end{align*} Now, consider the expression
$(w\bullet d)\circ(w\circ d)=d$. By \eqref{Eq:YB3} and \eqref{Eq:Di1}  $w\bullet d=((x\bullet y)\bullet(x\circ y))\bullet z=x\bullet z$ and $(x\bullet z)\circ (y\bullet (x\circ z))=(x\circ y)\bullet z$.
\end{proof}
\begin{lemma}
Let $(Q, r)$ be an LBDS. Then $x\bullet y=x^S$.
\end{lemma}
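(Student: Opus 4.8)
The plan is to isolate a single, clean target identity and then reach it by an equational chain of the sort \texttt{Prover9} produces. Because $(Q,\circ,\backslash_\circ,/^\circ)$ is a quasigroup, each right translation $R^\circ_c\colon x\mapsto x\circ c$ is a bijection; hence, by \eqref{Eq:Di2} (which says $(x\bullet y)\circ(x\circ y)=y$), it suffices to prove the inverse-property law
\[
 x^S\circ(x\circ y)=y ,
\]
since then $(x\bullet y)\circ(x\circ y)=x^S\circ(x\circ y)$, and cancelling $x\circ y$ on the right forces $x\bullet y=x^S$. An equivalent target, which is sometimes more convenient to pursue, is that $x\bullet y$ not depend on $y$: if $x\bullet y=x\bullet y'$ for all $y,y'$, write $f(x)$ for the common value; then Lemma~\ref{Lmm:DihedIdent}(a) gives $f(x)=x\bullet f(x)=f(x)\circ x$, so $x=f(x)\backslash_\circ f(x)=f(x)^S$, and applying the involution $S$ (Lemma~\ref{Lmm:DihedSinv}) yields $f(x)=x^S$. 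Either reformulation isolates exactly the same combinatorial content.

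Before starting the chain I would collect the small identities already at hand: $x\circ x^S=x$ (a quasigroup axiom), $x^S\circ x=x^S$ (from the proof of Lemma~\ref{Lmm:DihedSinv}), $x\bullet x^S=x^S$ (from the proof of Lemma~\ref{Lmm:STmutinv}), and $x^S\bullet x=x$ (substitute $y=z=x^S$ in Lemma~\ref{Lmm:ident} and cancel in $\circ$). The pivotal first move is to put $y=x^S$ into Lemma~\ref{Lmm:ident}, $(x\circ y)\bullet z=(x\bullet z)\circ(y\bullet(x\circ z))$: the left side collapses to $x\bullet z$, so after cancelling on the left in $(Q,\circ)$ one obtains the workhorse identity
\[
 x^S\bullet(x\circ z)=(x\bullet z)^S .
\]
Since $L^\circ_x$ is onto, this reads $x^S\bullet w=(x\bullet(x\backslash_\circ w))^S$ for all $w$; because $S$ is a bijection one may also replace $x$ by $x^S$ throughout, and re-feeding the workhorse identity into itself (and into \eqref{Eq:Di1}--\eqref{Eq:Di2}) yields, after using Lemma~\ref{Lmm:DihedIdent}(c) (which says $R^\bullet_{y^S}=(R^\bullet_y)^{-1}$ and so licenses right-cancellation in $(Q,\bullet)$), relations of the shape $L^\bullet_x\circ\theta=L^\bullet_x$ for explicit permutations $\theta$ built from $L^\circ_x$, $L^\circ_{x^S}$ and $S$ (for instance $x\bullet(x^S\circ a^S)=x\bullet a$). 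Accumulating enough such relations pins $L^\bullet_x$ down to the constant map $x\mapsto x^S$, which is the first target above.

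I expect the genuine obstacle to be precisely this last step: there is no conceptual barrier, since everything reduces to cancellation in the two quasigroups $(Q,\circ)$ and $(Q,\bullet)$, but identifying the exact succession of substitutions and cancellations that eliminates the second variable is a real equational search — which is why the appendix argument was discovered with \texttt{Prover9} and then reconstructed by hand as the chain of Lemmas~\ref{Lmm:RQ}--\ref{Lmm:ident}. A secondary point to watch is hygiene: none of the intermediate steps may use that $S$ is an endomorphism of $\circ$ or $\bullet$, nor the inverse-property law itself, since those are all downstream consequences of the present lemma; only bijectivity and involutivity of $S$, bijectivity of the $\circ$-translations and of the maps $R^\bullet_y$, and the already-established appendix identities are legitimately available.
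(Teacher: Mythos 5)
Your reduction of the goal is sound: since $R^\circ_{x\circ y}$ is a bijection, \eqref{Eq:Di2} shows that $x\bullet y=x^S$ is equivalent to the inverse-property law $x^S\circ(x\circ y)=y$, and your ``workhorse identity'' $x^S\bullet(x\circ z)=(x\bullet z)^S$ (obtained by setting $y=x^S$ in Lemma~\ref{Lmm:ident} and reading off $(x\bullet z)\ld(x\bullet z)$) is correctly derived, as are the four auxiliary identities you collect. The problem is that the argument stops exactly where the real work begins. You propose to accumulate relations of the form $L^\bullet_x\theta=L^\bullet_x$ for various permutations $\theta$ and assert that ``enough'' of them pin $L^\bullet_x$ down to the constant map with value $x^S$. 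But a relation $L^\bullet_x\theta=L^\bullet_x$ only says that $L^\bullet_x$ is constant on the orbits of $\theta$, and since the statement to be proved is precisely that $L^\bullet_x$ \emph{is} constant, no injectivity-based cancellation in $\bullet$ is available to finish; you would have to exhibit specific $\theta$'s whose joint orbits exhaust $Q$, or find some other mechanism, and you explicitly concede that you have not identified the required succession of substitutions. That concession is the gap: the combinatorial core of the lemma is left as an unexecuted search, so the proposal is a (correct) reduction plus a plan, not a proof.

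For contrast, the paper closes the argument by a short, concrete chain that avoids any cancellation in $\bullet$: substituting $x\mapsto x\bullet y$, $y\mapsto x$ into Lemma~\ref{Lmm:DihedIdent}(c) and using part (b) gives $x\bullet y=(x\circ(x\bullet y))\bullet x^S$; Lemma~\ref{Lmm:ident} turns this into $x\bullet y=x^S\circ(x\circ(x\bullet y))$, while \eqref{Eq:Di2} gives $x\bullet y=(x\bullet(x\bullet y))\circ(x\circ(x\bullet y))$. Cancelling the common right factor $x\circ(x\bullet y)$ in $(Q,\circ)$ yields $x^S=x\bullet(x\bullet y)=(x\bullet y)\circ x$ by part (a), and comparing with $x^S\circ x=x^S$ and cancelling $x$ on the right gives $x\bullet y=x^S$. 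The decisive device --- producing two expressions for $x\bullet y$ with the \emph{same} right-hand $\circ$-factor so that legitimate $\circ$-cancellation substitutes for the unavailable $\bullet$-cancellation --- is exactly the ingredient your sketch is missing.
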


\begin{proof}
We substitute $x$ by $x\bullet y$ and $y$ by $x$ in the identity (c) from Lemma \ref{Lmm:DihedIdent}. We apply also the identity (b) from the same lemma. Hence, $x\bullet y=((x\bullet y)\bullet x)\bullet (x \setminus_\circ x)=(x\circ (x\bullet y))\bullet (x \setminus_\circ x)$. By Lemma \ref{Lmm:ident}, $x\bullet y=(x \setminus_\circ x)\circ (x\circ (x\bullet y))$.  By \eqref{Eq:Di2}, $x\bullet y=(x\bullet (x\bullet y))\circ(x\circ (x\bullet y))$, and hence $x \setminus_\circ x=x\bullet(x\bullet y)=(x\bullet y)\circ x$. Now, since  $(x \setminus_\circ x)\circ x=x \setminus_\circ x=(x\bullet y)\circ x$,  we obtain  $x\bullet y=x\setminus_\circ x$.
   \end{proof}

\end{document}